\newtheorem{theorem}{Theorem}[section]
\newtheorem{corollary}[theorem]{Corollary}
\newtheorem{proposition}[theorem]{Proposition}
\newtheorem{lemma}[theorem]{Lemma}
\newtheorem{claim}{Claim}[theorem]
\newtheorem{subclaim}{Subclaim}[theorem]
\newtheorem{case}{Case}[theorem]
\newtheorem{subcase}{Subcase}[theorem]
\newtheorem{problem}[theorem]{Problem}
\newcommand{\floc}[1]{\textcolor{orange}{F: #1}}
\newcommand{\jbj}[1]{\textcolor{blue}{J: #1}}
\newcommand{\jbjc}[1]{\textcolor{magenta}{J: #1}}
\title{Complexity of (arc)-connectivity problems involving arc-reversals or deorientations\thanks{Research supported by the Independent Research Fond Denmark under grant number DFF
7014‐00037B.}}
\author{Jørgen Bang-Jensen\thanks{Department of Mathematics and Computer Science, University of Southern Denmark (email: jbj@imada.sdu.dk)}\and Florian Hörsch\thanks{CISPA, Saarbrücken , Germany (email: florian.hoersch@cispa.de)Most of this work was done while the author was a member of TU Ilmenau, Germany.}\and Matthias Kriesell\thanks{Department of mathematics Technische Universit\"at Ilmenau, Ilmenau, Germany (email:matthias.kriesell@tu-ilmenau.de )}}
\begin{document}
\maketitle

\begin{abstract}
  By a well known theorem of Robbins, a 
  graph $G$ has a strongly connected orientation if and only if $G$ is 2-edge-connected and it is easy to find, in linear time, either a cut edge of $G$ or a strong orientation of $G$. 
 A result of Durand de Gevigny shows that for every $k\geq 3$ it is NP-hard to decide if a given graph $G$ has a $k$-strong orientation. Thomassen showed that one can check in polynomial time whether a given graph has a 2-strong orientation. This implies that for a given digraph $D$ we can determine in polynomial time whether we can reorient (=reverse) some arcs of $D=(V,A)$ to obtain a 2-strong digraph $D'=(V,A')$. This naturally leads to the question of determining the minimum number of such arcs to reverse before the resulting graph is 2-strong. In this paper we show that finding this number is NP-hard.
  If a 2-connected graph $G$ has no 2-strong orientation, we may ask how many of its edges we may orient so that the resulting mixed graph is still 2-strong. Similarly, we may ask for a 2-edge-connected graph $G$ how many of its edges we can orient such that the resulting mixed graph remains 2-arc-strong.  We prove that when restricted to graphs satisfying suitable connectivity conditions, both of these problems are equivalent to finding the minimum number of edges we must double in a 2-edge-connected graph in order to obtain a 4-edge-connected graph. Using this, we show that all these three problems are NP-hard.

  Finally, we consider the operation of deorienting an arc $uv$ of a digraph $D$ meaning replacing it by an undirected edge between the same vertices. In terms of connectivity properties, this is equivalent to adding the opposite arc $vu$ to $D$. We prove that for every $\ell\geq 3$ it is NP-hard to find the minimum number of arcs to deorient in a digraph $D$ in order to obtain an $\ell$-strong digraph $D'$.
\end{abstract}
\noindent{\bf Keywords:} Arc-reversal, orientation, deorientation, vertex-connectivity, arc-connectivity

\section{Introduction}
In this paper graphs, digraphs and mixed graphs may have multiple edges and arcs but loops are not allowed. Note that a mixed graph can have  both edges and arcs. Here we also allow an arc and an edge to join the same pair of vertices. 
We use the notation $M=(V,E,A)$ to denote a mixed graph $M$ with edge set $E(M)=E$ and  arc set $A(M)=A$. We generally follow the notation of \cite{BG}.
For some positive integer $k$, a digraph $D=(V,A)$ is {\bf $\mathbf{k}$-strong} if it has  more than $k$ vertices and for any set $X$ of less than $k$ vertices, the digraph $D-X$ obtained by its removal is strongly connected. A digraph $D$ is {\bf $\mathbf{k}$-arc-strong} if every subdigraph obtained by deleting at most $k-1$ arcs from $D$ is strongly connected.

Let $a_k(D)$  denote the minimum number of new arcs one must add to a digraph $D=(V,A)$ in order to obtain a directed supergraph  $D'=(V,A\cup F)$ which is $k$-strong. As the complete digraph on $k+1$ vertices is $k$-strong we have $a_k(D)<\infty$ for every digraph on at least $k+1$ vertices. Frank and Jord\'an \cite{frankJCT65} found a characterization for $a_k(D)$ in terms of so-called one-way pairs as well as a polynomial algorithm for finding a minimum cardinality set $F$ of $a_k(D)$ new arcs whose addition to $D$ results in a $k$-strong digraph. The algorithm is exponential in $k$, a polynomial algorithm was given by V\'egh and Bencz\'ur in \cite{veghACMTA4}.

Similarly we let $\gamma_k(D)$ denote the minimum number of new arcs we need to add to $D=(V,A)$ so that the resulting digraph $D'=(V,A\cup F)$ is $k$-arc-strong. Frank \cite{frankSJDM5}
gave a characterization  for $\gamma_k(D)$ in terms of subpartitions of $V(D)$ and showed how to find a set of $\gamma_k(D)$ new arcs whose addition to $D$ results in a $k$-arc-strong digraph.

The following result, due to Nash-Williams, characterizes graphs that have a $k$-arc-strong orientation.

\begin{theorem}\cite{nashwilliamsCJM12}
  \label{thm:nashWthm}
  A graph $G=(V,E)$ has a  $k$-arc-strong orientation if and only if $G$ is $2k$-edge-connected.
\end{theorem}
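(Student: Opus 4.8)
The plan is to prove both implications. The forward direction is immediate: if $D$ is a $k$-arc-strong orientation of $G$, then for every $\emptyset\neq X\subsetneq V$ deleting the arcs from $X$ to $V\setminus X$ must not disconnect $D$, so there are at least $k$ of them, and symmetrically at least $k$ arcs go from $V\setminus X$ to $X$; hence $d_G(X)\ge 2k$ and $G$ is $2k$-edge-connected. For the reverse direction I would first record the standard Menger-type reformulation: a digraph $D$ on at least two vertices is $k$-arc-strong if and only if $d^-_D(X)\ge k$ (equivalently $d^+_D(X)\ge k$) for every $\emptyset\neq X\subsetneq V(D)$. The goal then becomes: every $2k$-edge-connected graph $G$ admits an orientation in which every nontrivial vertex set has in-degree at least $k$.

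I would prove this by induction on $|E(G)|$, the base case being $|V(G)|=2$, where $G$ is a collection of at least $2k$ parallel edges and one simply orients at least $k$ of them in each direction. For the inductive step there are two reductions. If $G$ is not minimally $2k$-edge-connected (i.e.\ some edge $e$ has $G-e$ still $2k$-edge-connected), then by induction $G-e$ has such an orientation, and orienting $e$ arbitrarily keeps it $k$-arc-strong, since adding arcs preserves this property. If $G$ is minimally $2k$-edge-connected, I would invoke the classical theorem of Mader that $G$ then contains a vertex $v$ with $d_G(v)=2k$. Since $G$ is $2k$-edge-connected we have $\lambda_G(x,y)\ge 2k\ge 2$ for all $x,y\in V\setminus v$, and $d_G(v)=2k$ is even, so Lov\'asz's complete splitting-off theorem yields a pairing of the edges at $v$ whose splitting produces a graph $G'$ on $V\setminus v$ that is again $2k$-edge-connected, with $|E(G')|=|E(G)|-k$ (deleting any loops created only lowers the edge count). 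By induction $G'$ has a $k$-arc-strong orientation $D'$.

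It then remains to lift $D'$ to an orientation $D$ of $G$ and verify the cut condition. I would keep the orientation of every edge not at $v$; for a split pair $\{vx,vy\}$ with $x\neq y$ that became an edge of $G'$ oriented from $x$ to $y$, I orient $vx$ towards $v$ and $vy$ away from $v$; and for a pair $\{vx,vx\}$ that became a discarded loop I orient one copy towards $v$ and the other away. To see $d^-_D(X)\ge k$ for every $\emptyset\neq X\subsetneq V$: if $v\notin X$ then $X\subseteq V\setminus v$ and there is an injection from the arcs of $D'$ entering $X$ into the arcs of $D$ entering $X$ (an arc of $D'$ from $p$ to $q$ with $q\in X$ arising from a split pair maps to the arc of $D$ from $v$ to $q$, and distinct split pairs involve distinct copies of the edge at $v$), so $d^-_D(X)\ge d^-_{D'}(X)\ge k$; if $v\in X$, put $Y=V\setminus X\subseteq V\setminus v$ and argue symmetrically with arcs leaving $Y$ to get $d^-_D(X)=d^+_D(Y)\ge d^+_{D'}(Y)\ge k$. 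By the reformulation $D$ is a $k$-arc-strong orientation of $G$, which closes the induction.

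The only inputs that are not bookkeeping are Mader's degree theorem for minimally $2k$-edge-connected graphs and Lov\'asz's splitting-off theorem, and the latter is where the real difficulty lies in any self-contained treatment: one must check that the complete splitting can be carried out preserving $2k$-edge-connectivity on $V\setminus v$---which works precisely because $2k\ge 2$, the analogous claim being false for edge-connectivity $1$---and it is crucial that $d_G(v)$ equals $2k$ and is therefore even, which is exactly why the argument is routed through minimally $2k$-edge-connected graphs rather than attempting to split an arbitrary vertex. As a sanity check, when $G$ is Eulerian none of this machinery is needed: any Eulerian orientation balances every cut, so every nontrivial set $X$ receives in-degree $d_G(X)/2\ge k$.
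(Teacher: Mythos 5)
The paper does not prove this statement at all: it is Nash-Williams' classical orientation theorem, cited from the original source, and the paper only remarks that a constructive proof/recursive algorithm exists via Lov\'asz's splitting theorem --- which is exactly the route you take (reduce to a minimally $2k$-edge-connected graph, use Mader's theorem to find a vertex of degree $2k$, split it off completely, orient by induction, and lift). Your argument is the standard one and is essentially correct; the only slip is in the final verification: for the two cuts $X=\{v\}$ and $X=V\setminus\{v\}$ your injection argument would assert $d^-_{D'}(X)\ge k$ (resp.\ $d^+_{D'}(V\setminus X)\ge k$) for $X$ or its complement equal to the \emph{entire} vertex set of $D'$, where these quantities are $0$ and the induction hypothesis says nothing. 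These two cuts are nevertheless fine for the trivial reason that your lifting rule gives $v$ exactly $k$ in-arcs and $k$ out-arcs, so adding that one-line observation closes the argument.
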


Denote the underlying undirected graph of a digraph $D$ by $UG(D)$.
By Theorem \ref{thm:nashWthm}, it is possible to reorient some arcs of a digraph $D$ so that the resulting digraph is $k$-arc-strong if and only if $UG(D)$ is $2k$-edge-connected. Edmonds and Giles \cite{edmonds1977} showed how to use submodular flows to check in polynomial time whether  a given graph has a $k$-arc-strong orientation and find such an orientation when it exists. There is also a simple recursive algorithm based on the constructive proof of Theorem \ref{thm:nashWthm} using Lov\'asz's splitting theorem \cite[Exercise 6.53]{lovasz1979}. Using an algorithm or finding a minimum cost  feasible submodular flow \cite{edmonds1977,fujishigeMOR14} we can also determine the minimum number of arcs whose reversal in $D$ leads to a $k$-arc-strong reorientation.

Given the results above it  is natural to  ask for the complexity of determining the minimum number of arcs of a digraph $D$ that we need to  reverse to obtain a $k$-strong digraph.
Clearly $D$ has a set of arcs whose  reversal makes the new digraph $k$-strong if and only if its underlying undirected graph $UG(D)$ has a $k$-strong orientation, so we first consider the status of that problem.

As every $k$-strong digraph is also $k$-arc-strong, it follows from Theorem \ref{thm:nashWthm} that the following condition is necessary for a graph $G=(V,E)$ to have a $k$-strong orientation:
\begin{equation}
  \label{kstrongor}
\forall\hspace{1mm}X\subset V \mbox{ with } |X|<k \mbox{ the graph }G-X \mbox{ is }2(k-|X|) \mbox{-edge-connected.}
\end{equation}

For every fixed $k$ we can check whether a given graph $G$ satisfies (\ref{kstrongor}) by applying a polynomial number of applications of a polynomial algorithm for finding the edge-connectivity of a graph.
There are many such algorithms, see e.g. \cite{nagamochiSJDM5}.
Frank \cite{frank1995} conjectured that (\ref{kstrongor}) would also be sufficent. This was confirmed for $k=2$ by Thomassen \cite{thomassenJCT110}. 

\begin{theorem}\cite{thomassenJCT110}\label{thomassen}
    A graph $G=(V,E)$ has a 2-strong orientation if and only if it is 4-edge-connected and $G-v$ is 2-edge-connected for all $v \in V$.
\end{theorem}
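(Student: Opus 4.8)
The ``only if'' direction is immediate and is precisely condition~(\ref{kstrongor}) for $k=2$: if $D$ is a $2$-strong orientation of $G$, then $d^+_D(S)\ge 2$ and $d^-_D(S)\ge 2$ for every $S$ with $\emptyset\ne S\subsetneq V$ --- otherwise deleting a suitable endpoint of the unique arc leaving, resp.\ entering, $S$ would destroy strong connectivity --- so $D$ is $2$-arc-strong and hence $G$ is $4$-edge-connected by Theorem~\ref{thm:nashWthm}; moreover $D-v$ is a strong orientation of $G-v$ for each $v$, so $G-v$ is $2$-edge-connected by Robbins' theorem. All the content is in the ``if'' direction.

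For the ``if'' direction the plan is an induction on $|V(G)|+|E(G)|$ via a minimal counterexample $G$. The hypotheses already force $G$ to be $2$-connected (since $G-v$ is connected for every $v$), although not necessarily $3$-connected, and I would first clear away two reducible situations. If $\{u,v\}$ is a $2$-vertex-cut, I would split $G$ along it into strictly smaller graphs $G_1$ and $G_2$, adding to each the right number of virtual $u$--$v$ edges to restore $4$-edge-connectivity, orient each $G_i$ $2$-strongly by induction, and glue the two orientations together along $\{u,v\}$. Similarly, if two vertices are joined by several parallel edges I would try to lower their multiplicity (e.g.\ by contracting one of the parallel edges) and induct. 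In both cases the work is to verify that the smaller graphs still satisfy ``$4$-edge-connected and locally $2$-edge-connected'' and that the reassembled orientation is again $2$-strong; this is routine modulo some care in choosing the virtual multiplicities.

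The heart of the argument is the remaining, essentially $3$-connected, case, which I would attack by edge splitting. Pick a vertex $v$: if $d_G(v)$ is even, Mader's splitting-off theorem provides a complete splitting at $v$ that preserves $\lambda(x,y)$ for all $x,y\ne v$, and deleting the now-isolated $v$ leaves a graph $G'$ on one fewer vertex that is again $4$-edge-connected; if $d_G(v)\ge 6$, splitting off a single admissible pair already yields a strictly smaller $4$-edge-connected graph. Provided such a $G'$ is still locally $2$-edge-connected, induction supplies a $2$-strong orientation $D'$, which I would lift to an orientation $D$ of $G$ by replacing each split-off pair $\{vx,vy\}$ with the directed path $x\to v\to y$ (or its reverse) compatibly with the orientation of the edge $xy$ in $D'$, and then check that $D$ is $2$-strong: every directed path of $D'$ reroutes through $v$, and the strong connectivity of $D-v$ is inherited from $D'$.

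I expect the real obstacles to be the following two, closely related, points. First, splitting off preserves the \emph{global} edge-connectivity, but not automatically the whole family of \emph{local} conditions ``$G-z$ is $2$-edge-connected''; to control this one has to analyse the tight and ``dangerous'' sets near $v$ and show that if no splitting preserving all of these exists, then $v$ and its neighbourhood have a very rigid structure. Second, splitting at any vertex of even degree, and at any vertex of degree $\ge 6$, makes progress, so an irreducible minimal counterexample is forced into a small, mostly odd-degree shape --- morally a $5$-regular graph --- and this rigid core must be handled essentially by hand, either by directly constructing a near-balanced $2$-strong orientation or by a refined splitting argument that copes with the unavoidable degree-$3$ vertex one creates when splitting off a single pair at a degree-$5$ vertex. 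Everything else --- the $2$-cut and parallel-edge reductions, the lifting of orientations, and the small base cases --- I expect to be comparatively routine.
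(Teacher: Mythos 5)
This statement is Thomassen's theorem on 2-strong orientations; the paper does not prove it but imports it from \cite{thomassenJCT110}, so your proposal has to be judged as an attempt at that (substantial) result, which settled Frank's conjecture for $k=2$. Your ``only if'' direction is correct and is exactly the observation around condition (\ref{kstrongor}). The ``if'' direction, however, is only a plan, and the two points you yourself flag as ``the real obstacles'' are not side issues -- they \emph{are} the theorem. Splitting off at $v$ (Mader) preserves local edge-connectivities among the other vertices, hence 4-edge-connectivity, but it gives you no control over the family of conditions ``$G'-z$ is 2-edge-connected for all $z$'', and no argument is offered for why an admissible splitting respecting all of these exists or how the rigid (essentially 5-regular, odd-degree) irreducible configurations are handled. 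Deferring these to ``a refined splitting argument'' or ``by hand'' leaves the core of the proof missing; Thomassen's actual argument is considerably more delicate than a standard splitting-off induction.

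There is also a concrete error in the step you describe as routine. After a complete splitting at $v$ you apply induction to $G'$ (on $V-v$, with the virtual edges) to get a 2-strong $D'$, and you lift by routing each split pair back through $v$. Then $D-v$ is not ``inherited from $D'$'': it is $D'$ with \emph{all} the split arcs deleted, since the virtual edges $xy$ are not edges of $G-v$. A 2-strong digraph is 2-arc-strong, so it survives the deletion of one arc, but a complete splitting at a vertex of degree at least $4$ removes at least two arcs, and strong connectivity of $D'$ minus several arcs is exactly what you would need to prove -- it amounts to producing a strong orientation of $G-v$ compatible with the rest of the orientation, which is precisely where the hypothesis ``$G-v$ is 2-edge-connected'' has to be exploited and where the difficulty sits. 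So even granting the reductions, the lifting step as stated does not yield that $D$ is 2-strong. The 2-cut gluing and parallel-edge reductions are plausible but also only asserted; as it stands the proposal is an outline of a strategy rather than a proof.
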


Surprisingly $k=1,2$ are the only values of $k$ for which (\ref{kstrongor}) is sufficient. In fact 
Durand de Gevigney \cite{gevigneyJCT141} proved the following.

\begin{theorem}\cite{gevigneyJCT141}
  \label{thm:k>2NPC}
  For every $k\geq 3$ it is {\sf NP}-hard to decide whether a given input graph has a $k$-strong orientation.
\end{theorem}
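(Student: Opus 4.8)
The plan is to prove \emph{only} \textsf{NP}-hardness, so it suffices to give a polynomial reduction from a known \textsf{NP}-complete problem; membership in \textsf{NP} (for each fixed $k$) is clear, since $k$-strongness of a candidate orientation is checkable by polynomially many flow computations. I would reduce from an appropriate \textsf{NP}-complete satisfiability problem (e.g.\ $3$-SAT), building from an instance $\varphi$ a graph $G_\varphi$ that has a $k$-strong orientation if and only if $\varphi$ is satisfiable. The whole point is to exploit the gap left open by Theorem~\ref{thomassen}: condition~(\ref{kstrongor}) is necessary but, for $k\ge 3$, not sufficient, so one wants $G_\varphi$ to \emph{always} satisfy~(\ref{kstrongor}) while the genuine question — whether the orientation forced on certain tight cuts can be completed to a $k$-strong one — encodes the SAT constraints.

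The construction would have three ingredients. \textbf{(1) A frame.} A highly (edge- and vertex-)connected core to which everything is attached, chosen so that for every $X\subseteq V(G_\varphi)$ with $|X|<k$ the graph $G_\varphi-X$ is $2(k-|X|)$-edge-connected; thus~(\ref{kstrongor}) holds unconditionally and is never the obstruction, and moreover almost every small vertex cut is "routed through the core", so that the only real freedom in a non-doomed orientation lives inside the gadgets. \textbf{(2) Variable gadgets.} For each variable $x_i$, a subgraph $H_i$ attached to the frame so that a suitable vertex set $X_i$ with $|X_i|<k$ makes $G_\varphi-X_i$ \emph{tight} — its edge-connectivity drops to exactly $2(k-|X_i|)$ on a small cut crossing $H_i$ — and for tight sets the orientation of the crossing edges is forced up to the in/out balance on that cut. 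This yields exactly two admissible "shapes" of the $H_i$-orientation, read as $x_i=\mathrm{true}$ or $x_i=\mathrm{false}$. \textbf{(3) Clause gadgets.} For each clause $C_j=\ell_{j1}\vee\ell_{j2}\vee\ell_{j3}$, a subgraph $Q_j$ joined to the three variable gadgets of its literals, engineered so that $Q_j$ admits a completion to a $k$-strong orientation precisely when at least one incoming literal is in its "true" shape; if all three are "false", a specific set of fewer than $k$ vertices separates $Q_j$ in every orientation.

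With the gadgets in place, the proof splits in the usual two directions. For soundness (satisfiable $\Rightarrow$ $k$-strong orientation exists): start from a satisfying assignment, orient each $H_i$ in the corresponding shape, each $Q_j$ via a literal the assignment satisfies, orient the frame in a fixed generic way, and then verify $k$-strongness by checking that $G_\varphi-X$ is strongly connected for all $X$ with $|X|<k$ — by design all but a bounded list of such cuts are handled uniformly by the core, and the remaining ones (those touching a single gadget) are checked by hand. For completeness (unsatisfiable $\Rightarrow$ no $k$-strong orientation): argue contrapositively. Given a $k$-strong orientation $D$ of $G_\varphi$, condition~(\ref{kstrongor}) applied to the sets $X_i$ forces each $H_i$ into one of its two shapes, defining a truth assignment; if this assignment failed some clause $C_j$, the "all-false" vertex set of size $<k$ associated with $Q_j$ would disconnect $D$, a contradiction. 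Hence $\varphi$ is satisfiable.

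The step I expect to be the main obstacle is the gadget design, i.e.\ making both halves of each gadget's specification hold at once: $G_\varphi$ must be edge-connected enough that~(\ref{kstrongor}) never fails (so that a "no" instance is genuinely caused by vertex-connectivity and the SAT structure, not a cheap edge cut), yet carry just enough slack that controlled near-$k$-element vertex sets become tight and thereby (a) force the binary behaviour of each variable gadget and (b) punish unsatisfied clauses. Getting this balance, plus the bookkeeping of verifying $k$-strongness over all $O(|V|^{\,k-1})$ small vertex sets in the soundness direction, is where the real work lies. (For $k>3$ one could alternatively try to settle $k=3$ first and pad a hardness instance $G$ with $k-3$ extra universal vertices; but for the padded graph to have a $k$-strong orientation exactly when $G$ has a $3$-strong one, the $k=3$ instances need to be additionally highly edge-connected, so a directly $k$-parametrised construction is cleaner.)
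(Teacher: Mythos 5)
This statement is not proved in the paper at all: it is quoted from Durand de Gevigney \cite{gevigneyJCT141}, so the only question is whether your argument stands on its own. It does not, because it is a proof schema rather than a proof. Everything that makes the theorem true is concentrated in the step you explicitly defer: the actual construction of the frame, the variable gadgets $H_i$ and the clause gadgets $Q_j$ with the three properties you list, namely that (i) condition~(\ref{kstrongor}) holds for the whole graph $G_\varphi$ unconditionally, (ii) certain vertex sets $X_i$ of size $<k$ create tight cuts that force each $H_i$ into exactly two admissible orientation ``shapes'', and (iii) a clause whose three literals are all in the ``false'' shape produces a vertex cut of size $<k$ in \emph{every} orientation. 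No candidate gadget is exhibited, and no verification is attempted; you yourself identify this as ``where the real work lies''. Since the whole difficulty of the theorem (and the reason (\ref{kstrongor}) fails to be sufficient for $k\ge 3$, in contrast with Thomassen's theorem for $k=2$) is precisely the existence of such gadgets, the proposal cannot be credited as a proof, and there is no way to check that the intended forcing mechanism can be realized: deleting a vertex set $X$ changes all degrees and cuts simultaneously, so arranging that $G_\varphi-X$ is tight exactly where you want, while (\ref{kstrongor}) never fails anywhere else, is a genuinely delicate balancing act, not a routine completion.

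A secondary point: your parenthetical padding idea for lifting $k=3$ to general $k$ (adding $k-3$ universal vertices) also needs an argument, since one must show the padded graph has a $k$-strong orientation if and only if the original has a $3$-strong one, and the ``only if'' direction requires that deleting the new vertices from a $k$-strong orientation can be assumed to leave a $3$-strong one, which is not automatic for an arbitrary hardness instance. So both the core construction and the reduction between values of $k$ remain open in your write-up; as it stands, the proposal records a sensible plan of attack but proves nothing.
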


 By Theorem \ref{thm:k>2NPC}, for $k\geq 3$
  it is already {\sf NP}-hard  to decide whether there is any set of arcs whose reversal makes a given digraph $k$-strong and hence a polynomial time algorithm for finding the minimum number of arcs to reverse in order to get a $k$-strong reorientation of a given $D$ is out of reach. The only remaining  case is $k=2$. By Thomassen's result and the fact that we can find the edge-connectivity of any graph in polynomial time,
  we can check in polynomial time whether a given digraph $D$ has a set of arcs whose reversal makes the resulting digraph 2-strong.

  Based on these observations, the first author asked at the conference ICGT 2022 in Montpellier whether  one could determine, for a given digraph $D=(V,A)$,  the minimum number of arcs whose reversal results in a 2-strong digraph.
In Section \ref{sec:reverse} we answer this question by proving that it is NP-hard to determine the minimum number of arcs whose reversal results in a 2-strong digraph.

  If a graph $G=(V,E)$ is 2-connected but does not satisfy (\ref{kstrongor}) for $k=2$, then it is natural to ask how many of its arcs we can orient so that the resulting mixed graph $M=(V,E',A)$ is still 2-strong. Here $A$ is the set of oriented edges (arcs) and $E'\subseteq E$ is the set of remaining edges that we did not orient. Similarly we can ask for a 2-edge-connected graph $G$ that is not 4-edge-connected and hence does not have a 2-arc-strong orientation by
  Theorem \ref{thm:nashWthm}, how many of its arcs we can orient so that the resulting mixed graph $M=(V,E',A)$ is still 2-arc-strong. In Section \ref{sec:partial}, we prove that for 2-connected graphs both of these problems are equivalent to the problem of finding the minimum number of edges in a 2-edge-connected graph one needs to double (add a copy of) in order to obtain a 4-edge-connected graph. We then prove that all of these problems are in fact NP-hard.

  In Section \ref{sec:deor} we consider another operation for increasing the (arc)-connectivity of a digraph, namely that is deorienting arcs. By {\bf deorienting} an arc $uv$ we mean the operation of replacing the arc by an undirected edge between $u$ and $v$. Note that the effect of this operation on the connectivity properties of the digraph is the same as when adding a (copy of) the opposite arc $vu$. Deorienting a subset of the  arcs of a digraph can increase its (arc)-connectivity and it corresponds to a variation of the (arc)-connectivity augmentation problems that we discussed in the beginning of the paper. We prove  that for every $k\geq 3$ it is NP-hard to find the minimum number of arcs one needs to deorient in a given digraph $D$ in order to obtain a $k$-strong mixed graph. This partially answers a question raised in \cite{BG}. The complexity of the analogous problem  to find the minimum number of arcs one needs to deorient in a given digraph $D$ in order to obtain a $k$-arc-strong mixed graph is unknown. We point out that there is a 2-approximation algorithm for the problem and show that the problem is NP-hard if we wish to achieve given local arc-connectivities. 

\section{Arc reversals}\label{sec:reverse}
Given a digraph $D=(V,A)$, by {\bf reversing (reorienting)} an arc $a \in A$, we mean the operation of exchanging the head and the tail of $a$. We wish to understand how many arcs we need to reorient in a given digraph in order to obtain a digraph that satisfies some prescribed connectivity condition. 
As mentioned in the introduction, using algorithms for minimum cost submodular flows, we can determine in polynomial time for every positive integer $k$ and digraph $D$ whose underlying graph is $2k$-edge-connected, the minimum number of arcs we need to reorient in a given digraph to obtain a $k$-arc-connected digraph.

For vertex-connectivity, the following is an immediate consequence of Theorem \ref{thm:k>2NPC}.
\begin{theorem}
For every integer $k\geq 3$, it is NP-hard to compute the minimum number of arcs we need to reorient in a given digraph to obtain a $k$-strong digraph.
\end{theorem}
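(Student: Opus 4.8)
The plan is to reduce directly from the problem in Theorem~\ref{thm:k>2NPC}, namely deciding whether a given graph $G$ has a $k$-strong orientation. The key observation is that if a digraph $D$ already has the property that its underlying graph $UG(D)$ fails to admit a $k$-strong orientation, then \emph{no} set of arc-reversals can ever make $D$ $k$-strong, so the minimum number of reversals is $+\infty$ (or undefined); whereas if $UG(D)$ does admit a $k$-strong orientation, that minimum is some finite integer bounded by $|A(D)|$. Thus a polynomial-time algorithm computing this minimum would in particular decide whether the value is finite, which decides whether $UG(D)$ has a $k$-strong orientation.

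Concretely, I would argue as follows. Let $G=(V,E)$ be an instance of the $k$-strong orientation problem. Fix an arbitrary orientation $D$ of $G$ (e.g.\ orient each edge according to some fixed ordering of $V$); this is computable in linear time and $UG(D)=G$. Now observe that for any set $F\subseteq A(D)$ of arcs, reversing the arcs in $F$ produces a digraph $D_F$ with $UG(D_F)=G$ as well, and conversely every orientation of $G$ arises as $D_F$ for a suitable $F$ (namely the set of arcs of $D$ whose direction disagrees with the target orientation). Hence $D$ can be turned into a $k$-strong digraph by reversing some set of arcs if and only if $G$ has a $k$-strong orientation. Therefore, if there were a polynomial-time algorithm that, given a digraph, outputs the minimum number of arc-reversals needed to reach a $k$-strong digraph (returning, say, $\infty$ or $|A|+1$ when impossible), we could run it on $D$ and in polynomial time decide whether $G$ has a $k$-strong orientation, contradicting Theorem~\ref{thm:k>2NPC} (assuming $\mathsf{P}\neq\mathsf{NP}$).

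The only subtlety — and the one point that deserves a sentence of care rather than being a genuine obstacle — is the precise formulation of the ``compute the minimum'' problem: one must phrase it so that its solvability entails decidability of the feasibility question. The standard move is to cast it as the decision problem ``given $D$ and an integer $t$, is there a set of at most $t$ arcs whose reversal yields a $k$-strong digraph?'' Taking $t=|A(D)|$ (which upper-bounds any finite answer, since reversing all arcs of $D$ gives the reverse digraph $\overleftarrow{D}$, whose underlying graph is still $G$, and more to the point any orientation of $G$ is reachable) reduces feasibility to this decision problem. So the reduction is: output $(D,|A(D)|)$ from $G$. This is clearly polynomial, and correctness is exactly the equivalence established above.

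I do not anticipate any hard part here; the statement is labelled ``an immediate consequence'' precisely because the reduction is trivial once one notes that arc-reversals preserve the underlying graph and realize every orientation. The writeup should be just a few lines: fix an orientation, invoke the bijection between reversal-sets and orientations, and conclude via Theorem~\ref{thm:k>2NPC}. If one wants the result to hold uniformly in $k$ rather than for each fixed $k$, one simply notes that the reduction from $G$ does not touch $k$ at all, so hardness for each fixed $k\ge 3$ follows verbatim from the corresponding case of Theorem~\ref{thm:k>2NPC}.
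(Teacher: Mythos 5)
Your proposal is correct and matches the paper's argument: the theorem is stated there as an immediate consequence of Theorem~\ref{thm:k>2NPC}, using exactly your observation that arc-reversals preserve the underlying graph and realize every orientation, so deciding whether any reversal set works (e.g.\ with budget $|A(D)|$) already decides whether $UG(D)$ has a $k$-strong orientation. Your handling of the ``compute the minimum'' formulation via the associated decision problem is the standard and intended reading.
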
  

As a digraph is strong if and only if it is 1-arc-strong,  it follows from our remarks 
in the introduction that we can determine the minimum number of arcs whose reversal results in a strong digraph 
in polynomial time.
We deal with the remaining open case, namely 2-strong digraphs. Formally, we consider the following problem:
\begin{center}
\begin{tabular}{|ccc|}\hline
 & \begin{minipage}{14cm}
\vspace{2mm}

{\bf Minimum 2-Strong Arc Reversal  (M2SAR):}
\vspace{2mm}

{\bf Input:} A digraph $D$, an integer $k$.

\vspace{2mm}

{\bf Question:} Is there a 2-strong digraph $D'$ which can be obtained from $D$ by reversing at most $k$ arcs?

\vspace{2mm}
\end{minipage} & \\ \hline
\end{tabular}
\end{center}

The following theorem completes the picture in the above discussion.

\begin{theorem}\label{thm:2reverse}
It is NP-hard to compute the minimum number of arcs we need to reorient in a given digraph to obtain a $2$-strong digraph.
\end{theorem}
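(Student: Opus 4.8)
The plan is to reduce from a known NP-hard problem, and the natural candidate is the connectivity-augmentation-by-doubling problem that the paper itself identifies as NP-hard in Section~\ref{sec:partial}, or else directly from a classical NP-hard problem such as a suitable version of \textsc{Vertex Cover} or \textsc{Hamiltonian}-type problem. Given the paper's own announced program, I would expect the cleanest route to go through the statement ``it is NP-hard to find the minimum number of edges one must double in a $2$-edge-connected graph in order to obtain a $4$-edge-connected graph.'' So I would first establish (or cite, if proved earlier) that hardness, and then design a gadget that translates an instance $G$ of the edge-doubling problem into a digraph $D$ where reversing an arc plays the role of doubling an edge.

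The \textbf{core of the construction.} Recall Thomassen's characterization (Theorem~\ref{thomassen}): a graph has a $2$-strong orientation iff it is $4$-edge-connected and $G-v$ is $2$-edge-connected for every $v$. The idea is to take a graph $G$ that is $2$-edge-connected but not $4$-edge-connected, attach a local gadget to each edge so that ``reversing one designated arc in the gadget'' simulates ``doubling that edge.'' Concretely, I would replace each edge $uv$ of $G$ by a small fixed digraph $H_{uv}$ on $u$, $v$, and a few auxiliary vertices, together with a choice of reference orientation, engineered so that: (i) with no reversal inside $H_{uv}$, the gadget contributes the ``strength'' of a single $u$--$v$ link; (ii) reversing exactly one specified arc of $H_{uv}$ (at unit cost) makes $H_{uv}$ contribute the strength of a doubled $u$--$v$ link in both the edge-cut sense and the $G-v$ sense; and (iii) any other reversal pattern inside $H_{uv}$ is never cheaper than one of these two ``canonical'' states. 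One also needs a global ``frame'' digraph ensuring that the auxiliary vertices of the gadgets are already $2$-strongly tied in, so that the only obstruction to $2$-strongness of the reoriented digraph is exactly the $4$-edge-connectivity / $G-v$-$2$-edge-connectivity requirement on the $G$-part. Then $D$ can be made $2$-strong by $k$ reversals iff $G$ can be made $4$-edge-connected by doubling $k$ edges, and NP-hardness transfers.

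\textbf{The main obstacle} is controlling \emph{all} reorientation patterns, not just the intended ones. When we allow arbitrary subsets of arcs to be reversed, a solver could, in principle, reverse arcs of the frame, or reverse arcs inside several gadgets in an uncoordinated way, and thereby ``cheat'' a cut open or closed more cheaply than the doubling interpretation allows. So the gadget and the frame must be rigid: I would want the frame to be highly arc-connected and vertex-connected (e.g. built from enough parallel copies of a fixed $2$-strong core) so that reversing any of its arcs either does nothing useful or strictly hurts, forcing an optimal solution to touch only the designated gadget arcs; and I would want each $H_{uv}$ small enough that one can do a finite case analysis certifying property~(iii). Making the reduction preserve the \emph{value} of the optimum (not merely the yes/no answer) — which is what the theorem statement about ``computing the minimum number'' really needs — requires that each doubled edge cost \emph{exactly} one reversal and each non-doubled edge cost \emph{zero}, with no partial credit and no spillover between gadgets; verifying this tight accounting, via the cut structure of $D$ after an arbitrary reversal set, is the technical heart of the argument.

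Finally, I would \textbf{assemble} the pieces: (1) state and justify the NP-hardness of \textsc{Double-to-$4$-edge-connected} (from Section~\ref{sec:partial}); (2) describe $H_{uv}$ and the frame explicitly and verify $D$ is polynomial-size; (3) prove the forward direction --- doubling $k$ edges of $G$ yields a legal set of $k$ reversals in $D$ --- which is the easy half; (4) prove the backward direction --- any set of at most $k$ reversals making $D$ $2$-strong can be ``normalized,'' without increasing its size, into one that reverses only designated gadget arcs, and the set of gadgets it touches gives a valid doubling of $\le k$ edges of $G$; and (5) conclude via Theorem~\ref{thomassen} applied to the $G$-part of $D$. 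Step~(4) is where the rigidity lemmas about the frame and the finite case check on $H_{uv}$ are consumed, and it is the step I expect to take the most work to write carefully.
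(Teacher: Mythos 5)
Your plan has a genuine gap at its core, and it lies exactly where you declare the ``easy half.'' You want each non-doubled edge of $G$ to cost zero reversals and each doubled edge to cost exactly one, so that the budget $k$ in the reversal instance matches the number of doubled edges. But in the doubling problem the orientation of the \emph{non-doubled} edges is completely free: Lemma~\ref{lem:two} works precisely because, once $F$ is doubled, Theorem~\ref{thomassen} supplies \emph{some} 2-strong orientation of $G'$, with no control over which direction each non-doubled edge receives. In your reduction the input digraph $D$ must commit in advance to one orientation per edge-gadget, and every deviation from that commitment is charged. A gadget cannot sidestep this: if its base state were symmetric (behaving like an undirected edge for vertex-connectivity purposes), then the whole $G$-part would behave like an undirected graph and 2-strongness of the reoriented $D$ would no longer encode 4-edge-connectivity of the doubled graph at all — the asymmetry of the non-doubled edges is exactly what makes Thomassen's condition bite. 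So the base state must be genuinely one-directional, the ``correct'' direction depends on the (unknown) solution, and realizing a 2-strong reorientation may force flipping arbitrarily many non-doubled gadgets, destroying the claimed tight accounting. The forward direction therefore does not go through with budget $k$, and no finite case analysis of a single gadget $H_{uv}$ can repair a mismatch that is global in nature. (A secondary issue: the concrete gadget satisfying your properties (i)--(iii) is never exhibited, so even the local rigidity claim is unverified.)

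For contrast, the paper's reduction avoids this cost-model mismatch entirely: it reduces not from an optimization problem but from the \emph{existence} problem I2VCOMG (orienting a mixed graph $M$ so that the result is 2-arc-strong and remains strong after deleting any vertex of an independent set $T$), which is NP-hard by \cite{hs}. Each edge of $M$ becomes a single arc that may be reversed, each arc of $M$ is protected by a ``rocket'' gadget whose tip costs at least $|E(M)|+1$ reversals to flip (Lemma~\ref{tip}), and the budget is set to $|E(M)|$ — large enough to reorient \emph{all} edge-arcs freely, too small to touch any rocket. Thus the minimum-reversal question is used only as a threshold separating ``orientable arcs'' from ``frozen arcs,'' rather than as a per-edge counter, which is exactly the flexibility your exact-accounting scheme lacks.
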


The rest of this section is concerned with proving Theorem \ref{thm:2reverse}. In Section \ref{gadg}, we describe a gadget that will prove useful. In Section \ref{mainred1}, we give the main reduction proving Theorem \ref{thm:2reverse}.
\subsection{The Gadget}\label{gadg}


We now describe a gadget we need in our reduction. For four distinct vertices $x_0,y_0,z_0,v^*$ and a positive integer $k$, an {\it$(x_0,y_0,z_0,v^*,k)$-out-rocket} is a digraph $R$ with $V(R)=\{x_0,\ldots,x_k,y_0,\ldots,y_k,z_0,\ldots,z_k,u,v^*\}$ and that contains the following arcs:

\begin{itemize}
\item the arcs $x_iy_i,y_iz_i$ and $z_ix_i$ for $i=1,\ldots,k$,
\item the arcs $x_ix_{i+1},y_iy_{i+1}$ and $z_{i+1}z_i$ for $i=0,\ldots,k-1$,
\item the arcs $x_ku,y_ku,uz_k$ and $uv^*$.
\end{itemize}

An illustration can be found in Figure \ref{rocket}. 

\begin{figure}[h]
    \centering
        \includegraphics[width=.2\textwidth]{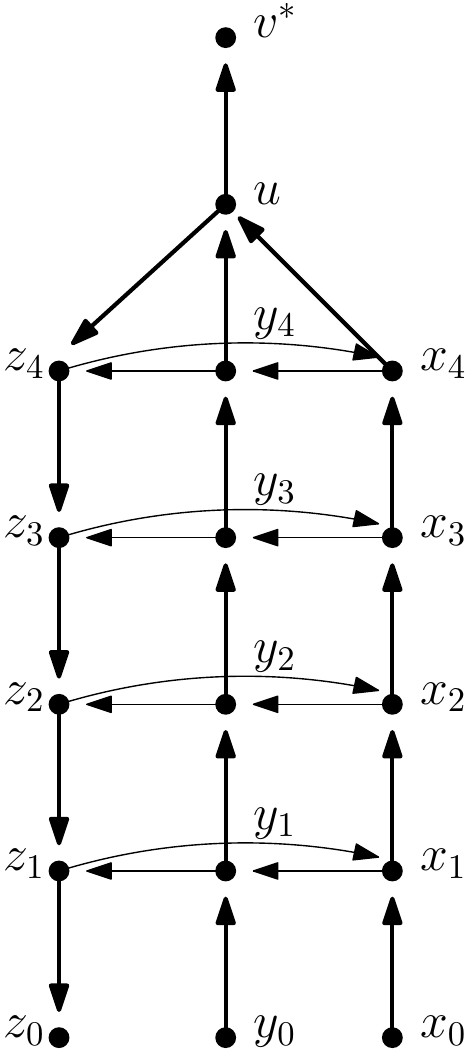}
        \caption{A $(x_0,y_0,z_0,v^*,4)$-out-rocket}\label{rocket}
\end{figure}

We call $uv^*$ the {\bf tip arc} of $R$. An {\it$(x_0,y_0,z_0,v^*,k)$-in-rocket} is obtained from $R$ by reversing all arcs. We call $x_0,y_0,z_0$ and $v^*$ the {\bf exterior} vertices and the remaining vertices of $V(R)$ the {\bf interior} vertices of $R$. The following two observations on the connectivity properties of rockets in which $R$ is an $(x_0,y_0,z_0,v^*,k)$-in-rocket or an $(x_0,y_0,z_0,v^*,k)$-out-rocket are easy to verify and hence given without proof.


\begin{proposition}\label{triv2}
Let $R'$ be obtained from $R$ by deleting an exterior vertex and identifying  the three remaining exterior vertices. Then $R'$ is strongly connected.
\end{proposition}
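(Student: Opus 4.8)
The plan is to exploit the obvious layered structure of a rocket. Assume first that $R$ is an $(x_0,y_0,z_0,v^*,k)$-out-rocket; the in-rocket case follows by reversing all arcs, since reversing all arcs of a strongly connected digraph keeps it strongly connected, and the construction of $R'$ commutes with arc-reversal. There are four choices for the deleted exterior vertex. By the rotational symmetry of the three ``arms'' $x_0,\dots,x_k$, $y_0,\dots,y_k$, $z_0,\dots,z_k$ inside the out-rocket (the triangles $x_iy_iz_i$ are directed cyclically and the arm arcs run $x_i\to x_{i+1}$, $y_i\to y_{i+1}$, $z_{i+1}\to z_i$, while $u$ receives from $x_k,y_k$ and sends to $z_k$), deleting $z_0$ is essentially the same as deleting $x_0$ up to relabelling, so in practice only two cases need to be examined: deleting $v^*$, and deleting one of $x_0,y_0,z_0$ (say $x_0$).

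In each case, let $w$ denote the single vertex obtained by identifying the three surviving exterior vertices, and exhibit a closed walk through all of $V(R')$, or equivalently show every vertex has a directed path to $w$ and $w$ has a directed path to every vertex. Concretely: from $w$ one can reach $x_1$ (via the identified $x_0$, or, if $x_0$ was the deleted vertex, via the identified $y_0$ then up the $y$-arm to $u$ then... — I would instead route through $z_0$ which still sends into the $z$-arm), then walk up an arm to $u$, and from $u$ back down the $z$-arm $z_k\to z_{k-1}\to\cdots\to z_1$ and around the triangles; and conversely every interior vertex reaches $u$ (the $x$- and $y$-arms flow into $u$, and each $z_i$ reaches $x_i$ hence $x_{i+1},\dots,x_k,u$), while $u$ reaches $w$ through the tip arc $uv^*$ (when $v^*$ survives) or through $uz_k$ and down the $z$-arm to $z_0$ (when $v^*$ is deleted). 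Assembling these paths shows $R'$ is strongly connected.

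The only mildly delicate point, and the one I would be most careful about, is the case where the deleted exterior vertex is $v^*$: then the tip arc $uv^*$ disappears, and one must check that $u$ still has an out-arc inside $R'$, namely $uz_k$, and that the $z$-arm together with the triangles lets $u$ reach $x_0=y_0=z_0=w$; conversely $w$ must reach $u$, which it does via $x_0\to x_1\to\cdots\to x_k\to u$. All of this is routine once the right paths are named, so the proposition is indeed ``easy to verify''; I would present it with a one-line description of the spanning closed walk in each of the two representative cases and leave the symmetric variants to the reader.
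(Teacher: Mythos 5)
The paper itself gives no proof of Proposition \ref{triv2} (it is declared easy and stated without proof), so your path-chasing plan is the natural thing to do, and the proposition is indeed true along the lines you sketch: the delicate case you single out (deleting $v^*$) is handled correctly, and your observations that every interior vertex reaches $u$ and that $u$ reaches the identified vertex $w$ either via $uv^*$ or via $uz_k$ and the $z$-arm are all fine.

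There are, however, two concrete slips in the plan. First, there is no rotational symmetry among the three arms: the $x$- and $y$-arms are oriented upward ($x_i\to x_{i+1}$, $y_i\to y_{i+1}$) while the $z$-arm is oriented downward ($z_{i+1}\to z_i$), and $u$ receives from $x_k,y_k$ but sends to $z_k$; moreover the triangle orientation $x_i\to y_i\to z_i\to x_i$ even rules out the plain $x\leftrightarrow y$ swap as an automorphism. So ``deleting $z_0$ is the same as deleting $x_0$ up to relabelling'' is not justified; deleting $z_0$ is a genuinely separate (if trivial) case, in which $w=x_0=y_0=v^*$ reaches the $z$-vertices via $u\to z_k\to\cdots\to z_1$ and each $z_i$ returns via the triangle arc $z_ix_i$ and the $x$-arm. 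Second, and more importantly, your chosen route in the case where $x_0$ is deleted is invalid: $z_0$ does \emph{not} ``send into the $z$-arm'' --- in $R$ the vertex $z_0$ has the single incident arc $z_1z_0$ and hence out-degree $0$, so no path can leave $w$ through $z_0$. The correct route, which you began to write and then abandoned, is $w=y_0\to y_1\to\cdots\to y_k\to u\to z_k\to\cdots\to z_i\to x_i$, using the triangle arcs $z_ix_i$ to enter the $x$-arm. With the symmetry claim replaced by a short case analysis (delete $v^*$; delete $x_0$ or $y_0$, which are handled analogously; delete $z_0$) and the faulty route replaced as above, the argument is complete.
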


\begin{proposition}\label{triv3}
Let $R'$ be obtained from $R$ by identifying  the four exterior exterior vertices. Then $R'-x$ is strongly connected for all interior vertices $x \in V(R)$.
\end{proposition}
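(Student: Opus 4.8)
The plan is to verify that after identifying all four exterior vertices $x_0, y_0, z_0, v^*$ into a single vertex $w$, the resulting digraph $R'$ remains strongly connected after deleting any one interior vertex. The interior vertices are $x_1,\dots,x_k$, $y_1,\dots,y_k$, $z_1,\dots,z_k$, and $u$; by symmetry (and since the in-rocket is obtained from the out-rocket by reversing all arcs, which preserves strong connectivity), I would treat the out-rocket case and handle the cases of which interior vertex is deleted.

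First I would describe the cycle structure of $R'$ explicitly. After the identification, the $x$-path $x_0 x_1 \cdots x_k u$ together with the arc $uw$ (coming from $uv^*$) and the arc $w x_1$ (coming from $x_0 x_1$, since $x_0 = w$) — wait, more carefully: $x_0x_1,\dots,x_{k-1}x_k$ and $x_ku$ and $uv^*$ give a directed path $w=x_0\to x_1\to\cdots\to x_k\to u\to v^*=w$, i.e. a closed directed walk through $w$; similarly $w=y_0\to y_1\to\cdots\to y_k\to u\to v^*=w$; and the $z$-arcs run the other way, $z_{i+1}z_i$ and $uz_k$, giving $w=v^*$ — no: $uz_k\to z_{k-1}\to\cdots\to z_1\to z_0=w$, and to reach $u$ we need an incoming arc, which is $uv^*$... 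Here $u$ has in-arcs $x_ku, y_ku$ and out-arcs $uz_k, uv^*$. So the closed walks are $w\to x_1\to\cdots\to x_k\to u\to z_k\to\cdots\to z_1\to w$ and the same with $y$ in place of $x$, plus $w \to x_1 \to \cdots \to x_k \to u \to v^* = w$. The triangles $x_iy_iz_i$ for $i\ge 1$ link the three "legs" at each level.

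Then I would argue strong connectivity of $R'$ minus an interior vertex by cases. If $u$ is deleted: from $w$ we can reach every $x_i$ via the $x$-path, every $y_i$ via the $y$-path, and then every $z_i$ via the triangle arc $x_iy_i$—no, $x_iz_i$? The triangle is $x_i\to y_i\to z_i\to x_i$, so from $x_i$ we reach $y_i$ and $z_i$; and from any $z_i$ we reach $z_{i-1},\dots,z_1\to w$ via the $z$-path; from any $y_i$ we go $y_i\to z_i\to\cdots\to w$. So $R'-u$ is strong. If $x_j$ is deleted ($1\le j\le k$): we still reach $x_1,\dots,x_{j-1}$ from $w$, and we reach the $y$-leg entirely from $w$ via the $y$-path; from $y_i$ we get $z_i$ (triangle), hence all lower $z$'s, hence $w$; for $i\ge j$ we reach $x_i$ via $z_i\to x_i$ (triangle), having reached $z_i$ from $y_i$; and $u$ is reached via $y_k\to u$, then $u\to z_k$ and $u\to v^*=w$. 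So everything is reachable from $w$ and reaches back to $w$. The cases $y_j$ deleted and $z_j$ deleted are symmetric in the sense that the triangle $x_iy_iz_i$ lets the two surviving legs at each level substitute for the missing one, and the three "spine paths" (two forward along $x,y$, one backward along $z$) each pass through $u$ and $w$, so deleting one interior vertex never disconnects $R'$.

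The main obstacle, such as it is, is purely bookkeeping: making sure that in each deletion case one still has both a path from $w$ to every surviving vertex and a path from every surviving vertex back to $w$, using the level triangles to hop between legs. There is no real difficulty — this is why the authors state it without proof — and I would present it as a short case analysis on which of $u$, $x_j$, $y_j$, $z_j$ is removed, invoking the arc-reversal symmetry to reduce the in-rocket case to the out-rocket case.
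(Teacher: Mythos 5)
Your case analysis is correct, and since the paper states Proposition \ref{triv3} without proof (it is declared ``easy to verify''), your argument is essentially the intended verification: reduce to the out-rocket by the arc-reversal symmetry, then check each deletion case using the level triangles $x_i\to y_i\to z_i\to x_i$ to hop between the three legs and the tip arc $uv^*=uw$ to return to the identified vertex. The only spot worth writing out explicitly is the deletion of $z_j$, where vertices $z_i$ with $i>j$ cannot descend the $z$-spine past the hole and must instead escape via $z_i\to x_i\to x_{i+1}\to\cdots\to x_k\to u\to w$; this is exactly what your ``surviving legs substitute for the missing one'' remark amounts to, so the proposal is fine as a plan.
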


In the following, we say that a digraph $D$ {\bf contains} an $(x_0,y_0,z_0,v^*,k)$-out-rocket $R$ or an $(x_0,y_0,z_0,v^*,k)$-in-rocket $R$ if $D$ contains $R$ as a subgraph and $\delta_{D}^+(v)=\delta_{R}^+(v)$ and $\delta_{D}^-(v)=\delta_{R}^-(v)$ hold for all interior vertices $v \in V(R)$.  

The following is the crucial property of rockets.

\begin{lemma}\label{tip}
Let $D_1,D_2$ be digraphs with $UG(D_1)=UG(D_2)$ such that $D_1$ contains a $(x_0,y_0,z_0,v^*,k)$-out-rocket $R$ or a $(x_0,y_0,z_0,v^*,k)$-in-rocket $R$ for some $x_0,y_0,z_0,v^*\in V(D_1)$ and some positive integer $k$, $D_2$ is 2-strong and the tip arc of $R$ is reversed in $D_2$. Then the number of arcs in $A(D_1)$ that are reversed in $D_2$ is at least $k+1$.
\end{lemma}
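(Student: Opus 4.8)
The plan is to first reduce to the case that $R$ is an $(x_0,y_0,z_0,v^*,k)$-out-rocket. Reversing \emph{all} arcs of both $D_1$ and $D_2$ preserves their underlying graphs, preserves the set of arcs reversed in passing from one to the other, preserves $2$-strongness of $D_2$, turns an in-rocket into an out-rocket, and keeps the tip arc reversed; so we may assume $R$ is an out-rocket with tip arc $uv^*$. Write $S$ for the set of arcs of $D_1$ whose orientation differs in $D_2$; by hypothesis $uv^*\in S$, and we must show $|S|\ge k+1$. We will in fact only use that $D_2$ has minimum in-degree and minimum out-degree at least $2$, a (weak) consequence of $2$-strongness: a vertex of out-degree $0$ keeps $D_2$ from being strong, and a vertex of out-degree $1$ is separated from the rest by deleting its unique out-neighbour (symmetrically for in-degree), using that $D_2$ has at least three vertices.

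The crux is a degree count inside the rocket. Directly from the definition, every interior vertex $v$ of $R$ has $d^+_R(v)=d^-_R(v)=2$: the arcs at $u$ are $x_ku,y_ku$ (in) and $uz_k,uv^*$ (out), and each $x_i,y_i,z_i$ with $1\le i\le k$ is incident to exactly one triangle arc and one spine-or-nose arc in each direction. Since $D_1$ contains $R$, these are \emph{all} arcs of $D_1$ at $v$, so $v$ has degree $4$ in $UG(D_1)=UG(D_2)$; combined with $d^+_{D_2}(v),d^-_{D_2}(v)\ge 2$ this forces $d^+_{D_2}(v)=d^-_{D_2}(v)=2$ for every interior $v$. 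Hence, letting $S':=S\cap A(R)$ and viewing $(V(R),S')$ as a subdigraph of $R$, reversing the arcs of $S'$ changes in- and out-degree by the same amount at each interior vertex, i.e.\ $d^+_{S'}(v)=d^-_{S'}(v)$ for every interior vertex $v$ of $R$.

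It remains to extract a long directed path inside $S'$. As $(V(R),S')$ is balanced at every interior vertex, its arc set decomposes into directed cycles and directed paths whose endpoints lie in $\{x_0,y_0,z_0,v^*\}$. The arc $uv^*\in S'$ cannot lie on a cycle, since $v^*$ has out-degree $0$ in $R$, so it lies on such a path $P$, necessarily ending at $v^*$; and since $z_0$ also has out-degree $0$ in $R$, the other endpoint of $P$ is $x_0$ or $y_0$. (One can see this directly as well: the set of vertices from which $v^*$ is reachable in $(V(R),S')$ receives no arc of $S'$ from outside, so a count using $d^+_{S'}=d^-_{S'}$ at interior vertices together with $d^-_{S'}(v^*)\ge 1$ shows this set meets $\{x_0,y_0,z_0\}$, hence $\{x_0,y_0\}$.) Finally, set $\phi(x_i)=\phi(y_i)=\phi(z_i)=i$ for $0\le i\le k$, $\phi(u)=k+1$ and $\phi(v^*)=k+2$; one checks that every arc of $R$ raises $\phi$ by at most $1$ (triangle arcs by $0$; the $x$- and $y$-spine arcs and $x_ku,y_ku,uv^*$ by exactly $1$; the $z$-spine arcs and $uz_k$ by $-1$). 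Thus any directed path in $R$ from $x_0$ or $y_0$ to $v^*$ uses at least $\phi(v^*)-\phi(x_0)=k+2\ge k+1$ arcs, and therefore $|S|\ge|S'|\ge|A(P)|\ge k+1$, as required. I expect the only delicate point to be the path-extraction step — ensuring the component of $S'$ carrying $uv^*$ is a genuine directed path from $\{x_0,y_0\}$ to $v^*$ rather than a closed walk or a path avoiding $v^*$ — whereas the degree count and the potential-function bound are routine.
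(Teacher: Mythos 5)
Your proof is correct, and it takes a genuinely different route from the paper's. The paper argues directly with cuts: for each level $i\in\{0,\dots,k-1\}$ it considers $Z_i=\{x_{i+1},\dots,x_k,\,y_{i+1},\dots,y_k,\,z_{i+1},\dots,z_k,\,u\}$, notes that $d_{UG(D_2)}(Z_i)=4$, so 2-strongness forces $d^-_{D_2}(Z_i)=2$; since the reversed tip arc $v^*u$ enters $Z_i$, at most one of $x_ix_{i+1},y_iy_{i+1}$ can still enter, hence at least one of them is reversed, giving $k$ reversed spine arcs plus the tip, i.e.\ $k+1$. You instead exploit only the local consequence of 2-strongness that every vertex has in- and out-degree at least $2$: since interior vertices have exactly two arcs in each direction in $D_1$ and total degree $4$ in the common underlying graph, the reversal set is balanced at every interior vertex, and your Euler-type decomposition (or, more directly, your parenthetical counting argument on the set of vertices reaching $v^*$) produces a reversed directed path of $R$ from $x_0$ or $y_0$ to $v^*$; the level function $\phi$ then bounds its length by $k+2$. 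All the individual steps check out (the path-extraction step you flag as delicate is fine: $v^*$ and $z_0$ have out-degree $0$ in $R$, so the piece of the decomposition containing $uv^*$ must be a path ending at $v^*$ and starting at $x_0$ or $y_0$). What your approach buys is a marginally stronger bound ($k+2$ rather than the $k+1$ the lemma asserts, which is all the reduction needs) and more structural information about which arcs must be reversed; what the paper's approach buys is brevity, a single two-line cut computation replacing the balance, decomposition, and potential arguments.
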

\begin{proof} By symmetry, we may suppose that $R$ is a $(x_0,y_0,z_0,v^*,k)$-out-rocket.
Denote the vertices of $R$ by $x_0,\ldots,x_k,y_0,\ldots,y_k,z_0,\ldots,z_k,u,v^*$ like in the definition of a rocket. By assumption, the tip arc of $R$ is reversed in $D_2$. It hence suffices to prove that for every $i=0,\ldots,k-1$, one of the arcs $x_ix_{i+1}$ and $y_iy_{i+1}$ is reversed in $D_2$. Fix some $i \in \{0,\ldots,k-1\}$ and let $Z=\{x_{i+1},\ldots,x_k,y_{i+1},\ldots,y_k,z_{i+1},\ldots,z_k,u\}$. Observe that $d_{UG(D_2)}(Z)=4$ and hence, as $D_2$ is 2-strong, we obtain $d_{D_2}^-(Z)=2$. As $v^*u$ enters $Z$ in $D_2$, at most one of the arcs $x_ix_{i+1}$ and $y_iy_{i+1}$ can enter $Z$ in $D_2$. This finishes the proof.
\end{proof}

\subsection{The reduction}\label{mainred1}

We here give the main reduction proving Theorem \ref{thm:2reverse}.
For our reduction, we need the following problem where an orientation of a mixed graph $M=(V,E,A)$ is any digraph the can be obtained from $M$ by assigning an orientation to each edge of $M$.

\begin{center}
\begin{tabular}{|ccc|}\hline
 & \begin{minipage}{14cm}
\vspace{2mm}

{\bf Independent 2-strong orientation of mixed graphs  (I2VCOMG):}
\vspace{2mm}

{\bf Input:} A mixed graph $M$, a set $T \subseteq V(M)$ that is independent in $UG(M)$.

\vspace{2mm}

{\bf Question:} Is there a 2-arc-strong  orientation $\vec{M}$ of $M$ such that $\vec{M}-v$ is strongly connected for all $v \in T$?

\vspace{2mm}
\end{minipage} & \\ \hline
\end{tabular}
\end{center}

The following is implicitely proven in \cite{hs}.

\begin{theorem}\label{alt}
I2VCOMG is NP-hard.
\end{theorem}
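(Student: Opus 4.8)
The plan is to obtain Theorem~\ref{alt} by reading off the instances produced in the \textsf{NP}-hardness proof of \cite{hs}. That reduction transforms an instance of a suitable \textsf{NP}-complete problem into a mixed graph $M$ in which the only ``dangerous'' vertices --- those whose removal can destroy strong connectivity, and hence those one must explicitly protect --- form a set $S$, while for every other vertex $w$ the digraph $\vec M - w$ is strongly connected for \emph{every} orientation of $M$. One then checks two further features of the construction: first, that $S$ is (or can be made, by subdividing the at most few edges joining two of its members with auxiliary vertices placed outside $S$) independent in $UG(M)$; and second, that the circulation pattern imposed by the pre-oriented arcs of $M$ forces every orientation which keeps $\vec M - v$ strong for all $v\in S$ to be $2$-arc-strong. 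Granting these, an orientation of $M$ is $2$-strong if and only if it is $2$-arc-strong and $\vec M - v$ is strong for all $v\in S$, so $(M,S)$ is a yes-instance of I2VCOMG exactly when $M$ has a $2$-strong orientation; this is the sense in which the result is ``implicitly'' contained in \cite{hs}.

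Carrying this out, I would go gadget by gadget through the \cite{hs} construction: verify that each internal vertex of each gadget is non-separating under any orientation (so that it need not be protected), verify that the inter-gadget interface edges can be routed so that no two protected vertices become adjacent, and verify the $2$-arc-strong claim. Only this last point is genuinely extra relative to \cite{hs} if that paper asks merely for a $2$-strong orientation; should the edge-connectivity there not already suffice, it is enough to add to $M$, in parallel with the existing structure, a small rigid $2$-arc-strong skeleton (for instance a directed $4$-cycle on four new vertices, each joined by a digon to a few old vertices) chosen so as to neither affect any vertex-cut behaviour nor create adjacencies inside $S$.

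If instead a self-contained argument is wanted, I would reduce directly from \textsc{3-Sat} (or from Theorem~\ref{thm:k>2NPC}). Each variable is represented by a gadget containing one protected vertex $t_i\in T$ whose removal forces a designated free edge into one of exactly two orientations, interpreted as its truth value; each clause is represented by a gadget containing one protected vertex $t'_j\in T$ such that $\vec M - t'_j$ is strong precisely when at least one incident literal-edge points the satisfying way; variable and clause gadgets communicate only through auxiliary vertices kept out of $T$, so $T$ stays independent, and the whole graph is padded with a rigid $2$-arc-strong core so that $2$-arc-strongness is automatic. A satisfying assignment then yields an I2VCOMG-orientation, and conversely. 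The main obstacle, in either approach, is the simultaneous bookkeeping: removal of a single $T$-vertex is a rather blunt instrument --- one only controls the strong connectivity of what remains --- yet it must encode exactly the intended Boolean condition while forcing global $2$-arc-strongness and admitting no spurious orientation; and establishing that $T$ can be made independent without weakening any gadget is delicate, since subdividing an arc locally destroys $2$-arc-strongness and must be compensated for.
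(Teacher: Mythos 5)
The paper offers no argument for Theorem~\ref{alt} at all: its entire ``proof'' is the one-line attribution that the statement is implicitly established by the reduction in \cite{hs}, so your first route --- reading the instances off the \cite{hs} construction and checking that the protected vertices form an independent set $T$ and that protecting exactly $T$ together with $2$-arc-strongness is equivalent to $2$-strongness there --- is precisely the paper's approach (the paper simply leaves even those verifications implicit). Your additional devices (subdividing edges inside $T$, grafting a rigid $2$-arc-strong skeleton, or a direct \textsc{3-Sat} reduction) go beyond anything the paper does and are only sketched --- and, as you yourself note, subdivision creates degree-$2$ vertices that destroy $2$-arc-strong orientability --- but they are not needed for the claim as the paper states it.
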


For a mixed graph $M=(V,E,A)$  and $X \subseteq V$, we use {\bf $\delta_M(X),\delta^+_M(X),\delta^-_M(X)$} to denote respectively, the set of edges of $E$ with one end vertex in $X$, the set of arcs of $A$ with leaving $X$ and the set of arcs of $A$ entering $X$. Further we use ${\bf d_M(X)}=|\delta_M(X)|, {\bf d_M^+(X)}=|\delta_M^+(X)|$ and ${\bf d_M^-(X)}=|\delta_M^-(X)|$.
We are now ready to prove Theorem \ref{thm:2reverse} through a reduction from I2VCOMG. 
\begin{proof}(of Theorem \ref{thm:2reverse}) Let $(M,T)$ be an instance of I2VCOMG.
For every arc $a \in A(M)$, we choose one vertex $v_a \in \{head(a),tail(a)\}-T$. Observe that such a vertex always exists as $T$ is an independent set in $UG(M)$.

We now create a digraph $D$. First, we let $V(D)$ contain $T$. Further, for every $v \in V(M)-T$, we let $V(D)$ contain a set $X_v$ that contains
\begin{itemize}
\item a vertex $x^{v,e}$ for every edge $e \in \delta_M(v)$,
\item a vertex $x^{v,a}$ for every arc $a \in \delta_M^+(v) \cup \delta_M^-(v)$ with $v_a \neq v$,
\item three vertices $x_0^{v,a},y_0^{v,a},z_0^{v,a}$ for every $a \in \delta_M^+(v) \cup \delta_M^-(v)$ with $v_a = v$.
\end{itemize}

For every $t \in T$ and $a \in \delta_M(t)\cup  \delta_M^+(t)\cup  \delta_M^-(t)$, for convenience, we also refer to $t$ by $x^{t,a}$. For every $v \in V(M)-T$, we let $D[X_v]$ be a biclique. Next, for every $e=uv \in E(M)$, we add an arc in an arbitrary direction linking $x^{u,e}$ and $x^{v,e}$. For every $a =uv \in A(M)$ with $v_a=u$, we let $D$ contain a $(x^{u,a}_0,y^{u,a}_0,z^{u,a}_0,x^{v,a},|E(M)|)$-out-rocket $R_a$ and for every $a =uv \in A(M)$ with $v_a=v$, we let $D$ contain a $(x^{v,a}_0,y^{v,a}_0,z^{v,a}_0,x^{u,a},|E(M)|)$-in-rocket $R_a$. This finishes the description of $D$. For every $v \in V(M)-T$, we let $Y_v=X_v \cup \bigcup \limits_{\substack{a \in \delta_M^+(v)\cup \delta_M^-(v)\\v_a=v}}V(R_a)-\bigcup \limits_{w \in V(M)-T-v}X_w-T$. For an illustration, see Figure \ref{example}.\\


\begin{figure}[h]
    \centering
        \includegraphics[width=1\textwidth]{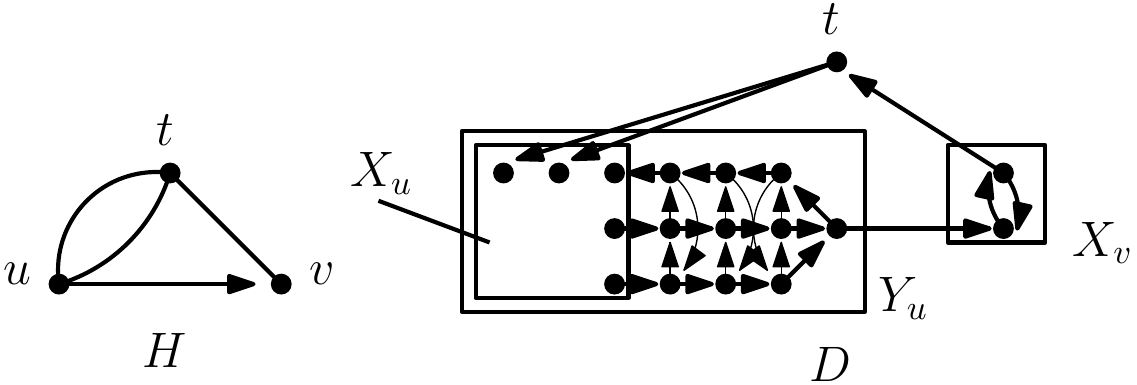}
        \caption{An example illustrating the construction. The instance consists of the mixed graph $M$ and $T=\{t\}$ and we choose $v_a=u$ for the arc $a=uv$. The arcs of the biclique in $D[X_u]$ have been omitted.}\label{example}
\end{figure}
\smallskip

We now show that $(D,|E(M)|)$ is a positive instance of M2SAR if and only if $(M,T)$ is a positive instance of I2VCOMG.
\smallskip

First suppose that $(D,|E(M)|)$ is a positive instance of M2SAR, so there is a 2-strong digraph $D_1$ that is obtained from $D$ by at most $|E(M)|$ arc reversals. We now create an orientation $\vec{M}$ of $M$. For every $e=uv \in E(M)$, we orient $e$ from $u$ to $v$ if and only if $D_1$ contains an arc from $x^{e,u}$ to $x^{e,v}$. 

In order to see that $\vec{M}$ is 2-arc-connected, consider some $Z \subseteq V(M)$ and let $Z'=\bigcup_{z \in Z-T}Y_z\cup(Z \cap T)$. Let $a \in \delta_{D_1}^+(Z')$. If $a$ is of the form $x^{u,e}x^{v,e}$ for some $e \in E(M)$, then $uv \in \delta_{\vec{M}}^+(Z)$ by construction. Otherwise, the arc corresponding to $a$ in $D$ is the tip arc of a rocket $R_{a'}$ in $D$. It follows from Lemma \ref{tip} that $a$ also exists in $D$ and hence $a' \in \delta_{\vec{M}}^+(Z)$ by construction. As $D_1$ is 2-strong, it follows that $d_{\vec{M}}^+(Z)\geq d_{D_1}^+(Z')\geq 2$, so $\vec{M}$ is 2-arc-connected.

Now consider $\vec{M}-t$ for some $t \in T$. Let $Z \subseteq V(M-t)$ and $Z'=\bigcup_{z \in Z}Y_z$. As $D_1-t$ is strongly connected, a similar argument as before shows that  $d_{\vec{M}-t}^+(Z)\geq d_{D_1-t}^+(Z')\geq 1$, so $\vec{M}-t$ is strongly connected.
\smallskip

Now suppose that $(M,T)$ is a positive instance of I2VCOMG, so there is a 2-arc-connected orientation $\vec{M}$ of $M$ such that $\vec{M}-t$ is strongly connected for all $t \in T$. We now obtain $D_1$ from $D$ by reversing all the arcs of the form $x^{u,e}x^{v,e}$ for some $e \in E(M)$ for which the edge $e$ is oriented from $v$ to $u$ in $\vec{M}$. Observe that $D_1$ is obtained from $D$ by reversing at most $|E(M)|$ arcs. 

We still need to show that $D_1-x$ is strongly connected for all $x \in V(D_1)$. We distinguish three cases.

\begin{case}
$x \in T$.
\end{case}
By Proposition \ref{triv2} and as $G[X_v]$ is a biclique, $D_1[Y_v]$ is strongly connected for all $v \in V(D_1)-T$. It hence suffices to prove that the graph obtained from $D_1$ by contracting $Y_v$ into a single vertex for all $v \in V(D_1)-T$ is strongly connected. This graph is isomorphic to $\vec{M}-x$ and hence strongly connected by assumption.

\begin{case}
$x \in V(R_a)-T$ for some $a \in A(M)$.
\end{case}
In this case, there is a unique $v_0 \in V(M)-T$ such that $x \in Y_{v_0}$. As $\vec{M}$ is 2-arc-connected, we obtain that $\vec{M}-a$ is strongly connected. Further, by Proposition \ref{triv2} and as $D_1[X_v]$ is a biclique for all $v \in V$, we obtain that $D_1[Y_v]$ is strongly connected for all $v \in V(M)-T-v_0$ and $D_1[Y_{v_0}-V(R_a)]$ is strongly connected. We obtain that $D_1-V(R_a)$ is strongly connected. As $X_v$ is a biclique for all $v \in V(M)-T$, all the exterior vertices of $R_a$ which are distinct from $x$ are in the same connected component of $D_1-x$ as $V(D_1)-V(R_a)$. We obtain that $D_1-x$ is strongly connected by Propositions \ref{triv2} and \ref{triv3}.

\begin{case}
$x = x^{u_0,e}$ for some $e=u_0v_0 \in E(M)$.
\end{case}

By Proposition \ref{triv2} and as $D_1[X_v]$ is a biclique for all $v \in V$, the subdigraph  $D_1[Y_v]$ is strongly connected for all $v \in V(M)-T-u_0$ and $D_1[Y_{u_0}-x]$ is strongly connected. As $\vec{M}$ is 2-arc-strong, $\vec{M}-\vec{e}$ is strongly connected. Now it follows from the way we constructed $D$ from $M$ that $D_1-x$ is strongly connected. 
\end{proof}
We wish to remark that a slight modification of this reduction shows that the minimization problem associated to M2SAR does not admit an $\alpha$-approximation algorithm for any constant $\alpha$.
Further, similar results can be obtained when restricting the input graphs to being acyclic.

\section{Partial orientations}\label{sec:partial}
We say that a mixed graph $M=(V,E,A)$ is {\bf $\mathbf{k}$-arc-strong} if $d_A^+(X)+d_E(X)\geq k$ for every non-empty proper subset $X$ of $V$. Here $d_A^+(X)$ denotes the number of arcs leaving $X$ in the subdigraph $D$ of $M$ induced by the arcs in $A$ and $d_E(X)$ denotes the number of edges with exactly one end in $X$ in the subgraph $G$ of $M$ induced by the edges in $E$. Similarly, a mixed graph $M=(V,E,A)$ is $k$-strong if it has more than $k$ vertices and deleting any set of less than $k$ vertices from $M$ leaves a 1-arc-strong (strong) mixed graph.
The next two observations are easy to prove.

\begin{proposition}\label{undigon}
   Let  $M_1,M_2$ be mixed graphs such that $M_2$ is obtained from $M_1$ by replacing a digon by an undirected edge between the same two vertices and let $k$ be a positive integer. Then $M_2$ is $k$-arc-strong if and only if $M_1$ is $k$-arc-strong.
\end{proposition}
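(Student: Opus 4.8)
The plan is to show the stronger statement that the mixed out-cut function is literally unchanged by this operation: for every non-empty proper subset $X$ of the common vertex set $V = V(M_1) = V(M_2)$, one has $d_{A_2}^+(X)+d_{E_2}(X) = d_{A_1}^+(X)+d_{E_1}(X)$. Since $k$-arc-strength of a mixed graph is, by definition, exactly the statement that this quantity is at least $k$ for all such $X$, the equivalence follows immediately for every $k$.

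To prove the claimed identity, let the replaced digon consist of the arcs $uv$ and $vu$ in $M_1$, so that $A_2 = A_1 \setminus \{uv, vu\}$ and $E_2$ is $E_1$ together with one new edge joining $u$ and $v$ (all other arcs and edges being identical in $M_1$ and $M_2$). Fix a non-empty proper $X \subseteq V$ and split into two cases according to how $X$ meets $\{u,v\}$. If $u,v$ are both in $X$ or both outside $X$, then neither $uv$ nor $vu$ leaves $X$, so $d_{A_2}^+(X) = d_{A_1}^+(X)$; and the new edge has no end in $X$ or both ends in $X$, so it does not cross the cut and $d_{E_2}(X) = d_{E_1}(X)$. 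If exactly one of $u,v$ lies in $X$, say $u \in X$ and $v \notin X$ (the other subcase is symmetric), then of the two deleted arcs exactly one, namely $uv$, leaves $X$, so $d_{A_2}^+(X) = d_{A_1}^+(X) - 1$; meanwhile the new edge $uv$ has exactly one end in $X$, so $d_{E_2}(X) = d_{E_1}(X) + 1$. In both cases the sum $d_A^+(X)+d_E(X)$ is preserved, which completes the argument.

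There is essentially no obstacle here; the only point requiring a moment's care is the bookkeeping in the second case, where one must observe that the out-degree function $d^+$ counts precisely one of the two oppositely directed arcs of the digon when $X$ separates $u$ from $v$, which is exactly matched by the single undirected edge contributing $1$ to $d_E(X)$. (We note that, in contrast to the notion of $k$-strong mixed graph, the definition of $k$-arc-strong imposes no lower bound on the number of vertices, so no separate verification of a vertex-count condition is needed.)
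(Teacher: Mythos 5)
Your proof is correct and is exactly the routine cut-counting verification the paper has in mind: the paper states Proposition \ref{undigon} without proof (``easy to prove''), and your observation that $d_A^+(X)+d_E(X)$ is preserved for every non-empty proper $X$ — with the one-arc-for-one-edge exchange when $X$ separates $u$ from $v$ — is the intended argument.
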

\begin{proposition}\label{undigon1}
   Let  $M_1,M_2$ be mixed graphs such that $M_2$ is obtained from $M_1$ by replacing a digon by an undirected edge between the same two vertices and let $k$ be a positive integer. Then $M_2$ is $k$-strong if and only if $M_1$ is $k$-strong.
\end{proposition}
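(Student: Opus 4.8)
The plan is to deduce this directly from Proposition \ref{undigon} applied with parameter $1$, essentially for free. Write the digon of $M_1$ as the anti-parallel pair of arcs $uv, vu$, and let $e = uv$ denote the undirected edge of $M_2$ that replaces it; every other arc and edge, as well as the vertex set, is common to $M_1$ and $M_2$. In particular $|V(M_1)| = |V(M_2)|$, so the ``more than $k$ vertices'' clause holds for $M_1$ if and only if it holds for $M_2$, and it remains to show that $M_1 - S$ is $1$-arc-strong if and only if $M_2 - S$ is $1$-arc-strong for every $S \subseteq V(M_1)$ with $|S| < k$.

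So I would fix such an $S$ and split into two cases according to whether $S$ meets $\{u, v\}$. If $S \cap \{u, v\} \neq \emptyset$, then deleting that endpoint removes both $uv$ and $vu$ from $M_1$ and removes $e$ from $M_2$, while all surviving arcs and edges coincide; hence $M_1 - S$ and $M_2 - S$ are literally the same mixed graph and the equivalence is immediate. If $S \cap \{u, v\} = \emptyset$, then deleting $S$ leaves the digon of $M_1$ and the edge $e$ of $M_2$ untouched, so $M_2 - S$ is obtained from $M_1 - S$ exactly by replacing the digon on $\{u, v\}$ by the undirected edge $e$; Proposition \ref{undigon} with $k = 1$ then yields that $M_1 - S$ is $1$-arc-strong if and only if $M_2 - S$ is $1$-arc-strong.

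Putting the two cases together gives the desired equivalence for every $S$ with $|S| < k$, and combined with $|V(M_1)| = |V(M_2)|$ this shows $M_1$ is $k$-strong if and only if $M_2$ is $k$-strong. There is no real obstacle here: the only point to be slightly careful about is the bookkeeping in the first case --- verifying that removing a vertex incident with the digon really does identify $M_1 - S$ with $M_2 - S$ --- and the statement is best thought of as the vertex-connectivity shadow of Proposition \ref{undigon}, obtained by peeling off fewer than $k$ vertices and invoking the arc-version of the statement in the residual mixed graph.
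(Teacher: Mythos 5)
Your proof is correct. The paper states this proposition without proof (it is offered as an ``easy to prove'' observation alongside Proposition \ref{undigon}), so there is no argument of the paper to compare against; your route --- using the paper's definition of a $k$-strong mixed graph as one with more than $k$ vertices in which deleting any set $S$ of fewer than $k$ vertices leaves a strong (1-arc-strong) mixed graph, splitting on whether $S$ meets $\{u,v\}$ (in which case $M_1-S$ and $M_2-S$ coincide) or not (in which case Proposition \ref{undigon} with parameter $1$ applies to $M_1-S$ and $M_2-S$), and noting $|V(M_1)|=|V(M_2)|$ --- is a clean and complete way to establish the statement.
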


A {\bf partial orientation} of an undirected graph is a mixed graph that is obtained from orienting some of the edges in the graph. The following is the central question of this section:
Given a graph $G=(V,E)$ and an integer $k$, can we find a set  $F\subseteq E$ with $|F|\geq k$ and an orientation $\vec{F}$ of $F$ such that the mixed graph 
$M=(V,E-F,\vec{F})$ satisfies a certain prescribed connectivity property?\\

The case of strong connectivity is solvable in polynomial time due to the following easy consequence of the restriction of Theorem \ref{thm:nashWthm} to $k=1$ which was proved earlier by Robbins \cite{robb}.
\begin{theorem}\cite{robb}
Let $G=(V,E)$ be a graph and $k$ a positive integer. Then there exists a strongly connected partial orientation of $G$ in which $k$ arcs are oriented if and only if $G$ is connected and $k \leq |E|-b(G)$ where $b(G)$ denotes the number of bridges of $G$.
\end{theorem}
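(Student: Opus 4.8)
### Proof proposal

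The plan is to prove both directions by relating partial strong orientations of $G$ to partial strong orientations of subgraphs obtained by cutting $G$ along its bridges. First, I would record the following elementary facts. If $M=(V,E-F,\vec F)$ is a strongly connected partial orientation, then the underlying undirected graph of $M$ is connected, so $G$ is connected. Moreover, every bridge $uv$ of $G$ must be an \emph{unoriented} edge of $M$: if $uv$ were oriented, say from $u$ to $v$, then since $uv$ is a bridge, there is no $u$--$v$ path in $M$ avoiding this edge in either the underlying graph sense, so $v$ cannot reach $u$ in $M$, contradicting strong connectivity. Hence at most $|E|-b(G)$ edges are oriented, giving the bound $k\le |E|-b(G)$ and establishing necessity of both conditions.

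For sufficiency, suppose $G$ is connected and $0\le k\le |E|-b(G)$; I want to produce a strongly connected partial orientation with exactly $k$ arcs. The natural approach is to use the 2-edge-connected block structure: let $B_1,\dots,B_m$ be the maximal subgraphs of $G$ that are 2-edge-connected (the ``bridgeless blocks''), together with the bridges connecting them into a tree-like structure. Each $B_i$ is 2-edge-connected, so by Robbins' theorem (the $k=1$ case of Theorem~\ref{thm:nashWthm}) it has a strongly connected orientation, hence a strongly connected partial orientation with $j$ arcs for \emph{every} $0\le j\le |E(B_i)|$ — I would prove this intermediate claim by taking a strong orientation and then, one at a time, replacing an oriented edge by an undirected edge; since an undirected edge is ``at least as good'' as an oriented one for strong connectivity (it can be traversed in both directions), un-orienting any edge preserves strong connectivity of the mixed graph, so we can realize any number of arcs from $|E(B_i)|$ down to $0$. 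Leaving all bridges unoriented and combining: the mixed graph where each block $B_i$ carries a strongly connected partial orientation with $j_i$ arcs and all bridges are unoriented is strongly connected (the blocks are internally strongly connected, and the bridges, being undirected, let us move freely between adjacent blocks in the block-tree). Since $\sum_i |E(B_i)| = |E|-b(G)\ge k$, we can choose the $j_i$ with $0\le j_i\le |E(B_i)|$ and $\sum_i j_i = k$, producing the desired partial orientation.

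The main obstacle — really the only non-routine point — is the intermediate claim that a 2-edge-connected graph admits a strongly connected partial orientation with \emph{any} prescribed number of arcs between $0$ and $|E|$, and more importantly that these can be glued across the bridge-block decomposition to hit exactly $k$ arcs overall rather than just ``at most'' or ``a specific'' number. Both halves of this are handled by the un-orienting observation: once one has a fully oriented strong orientation of a bridgeless block, un-orienting edges one at a time gives a chain of strongly connected mixed graphs with arc counts $|E(B_i)|, |E(B_i)|-1, \dots, 0$, and choosing $j_i$ summing to $k$ is then just an integer partition with box constraints, which is feasible precisely because $0\le k\le\sum_i|E(B_i)|=|E|-b(G)$. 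Everything else — connectivity of $G$ being necessary, bridges being forced unoriented, and the block-tree gluing — is routine. I would also remark that the whole argument is constructive and runs in linear time, matching the ``it is easy to find'' spirit of the surrounding discussion, though the statement as given only asserts the characterization.
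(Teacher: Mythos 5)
Your proof is correct. The paper does not actually prove this statement---it is cited to Robbins and described only as an easy consequence of the $k=1$ case of Theorem \ref{thm:nashWthm}---and your argument (bridges are forced to stay unoriented, Robbins' theorem on each maximal 2-edge-connected block, deorienting arcs one at a time to realize any arc count, and gluing along the bridge-block tree with undirected bridges) is precisely that intended elementary derivation, carried out correctly.
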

The following is an immediate consequence of Theorem \ref{thm:k>2NPC}.

\begin{theorem}
For any $\ell \geq 3$, it is NP-hard to decide whether there exists an $\ell$-strong partial orientation of a given graph $G$ in which at least $k$ edges are oriented where $k$ is part of the input. 
\end{theorem}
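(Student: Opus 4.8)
The statement to prove is that for any $\ell \geq 3$, it is NP-hard to decide whether a given graph $G$ admits an $\ell$-strong partial orientation in which at least $k$ edges are oriented, with $k$ part of the input. The plan is to reduce directly from the problem in Theorem~\ref{thm:k>2NPC}, namely deciding whether a given graph $H$ has an $\ell$-strong orientation. The key observation is that a (full) $\ell$-strong orientation is just a partial orientation with the maximum possible number of oriented edges, so we should engineer an instance where orienting "many" edges forces us, in effect, to orient essentially all of them.

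\textbf{Main steps.} Given an input graph $H = (V_H, E_H)$ to the $\ell$-strong orientation problem, I would set $G := H$ and $k := |E_H|$. Then a partial orientation of $G$ with at least $k = |E_H|$ oriented edges is exactly a full orientation of $G = H$ (no edges left undirected), and by Proposition~\ref{undigon1} an $\ell$-strong mixed graph with no undirected edges is precisely an $\ell$-strong digraph. Hence $(G, k)$ is a YES-instance of the partial orientation problem if and only if $H$ has an $\ell$-strong orientation, which is NP-hard to decide. This already proves the theorem, so the "immediate consequence" phrasing in the statement is justified: essentially no work is needed beyond unwinding definitions.

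\textbf{A subtlety worth addressing.} One might worry that the partial orientation problem could be "easier" because the solver is allowed to leave edges undirected; but fixing $k = |E_H|$ removes exactly that freedom. Conversely one should check that the reduction is polynomial, which is trivial since $G$ is literally $H$ and $k$ is its edge count. If one wanted the hardness to persist even when $k$ is somewhat smaller than $|E(G)|$ (a more robust statement not actually claimed here), the natural move would be to pad $H$ with a disjoint $\ell$-strong "filler" graph whose edges can always be oriented without interfering — but this is not required for the statement as given.

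\textbf{Expected obstacle.} There is essentially no obstacle: the only thing to be careful about is invoking Proposition~\ref{undigon1} (or simply the definition of $k$-strong for mixed graphs) correctly to pass between "$\ell$-strong mixed graph with all edges oriented" and "$\ell$-strong digraph", and to confirm that $k$ being part of the input is what makes the reduction go through cleanly. The whole argument is a one-paragraph definitional reduction from Theorem~\ref{thm:k>2NPC}.
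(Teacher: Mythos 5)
Your reduction (take $G:=H$ and $k:=|E(H)|$, so that a partial orientation with at least $k$ oriented edges is forced to be a full orientation, which is $\ell$-strong as a mixed graph iff it is an $\ell$-strong digraph) is correct and is exactly the definitional argument the paper has in mind when it states this result as an immediate consequence of Theorem~\ref{thm:k>2NPC}. The appeal to Proposition~\ref{undigon1} is unnecessary — the equivalence is just the definition of $\ell$-strong for mixed graphs — but this does not affect correctness.
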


By these two  results, the only remaining case for vertex-connectivity is $\ell=2$. We are going to show that this case is NP-hard and that the same holds for 2-arc-connectivity.


The following results describe a close relationship between the above mentioned problem on partial orientations and the problem of making a graph satisfy certain connectivity properties by doubling some edges.
We need the following result which is a direct consequence of Corollary 2 in \cite{kiralyJCT96}.
\begin{proposition}\label{oppo}
    Let $G$ be a graph that has a $k$-arc-connected orientation for some positive integer $k$ and let $(e_1,f_1),\ldots,(e_t,f_t)$ be a collection of pairwise disjoint pairs of parallel edges in $G$. Then $G$ has a $k$-arc-connected orientation in which $e_i$ and $f_i$ are oriented in opposite directions for $i=1,\ldots,t$.
\end{proposition}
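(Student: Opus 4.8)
The plan is to first translate the conclusion into a statement about digons, then strip the digons away using Proposition \ref{undigon}, and finally invoke a classical orientation theorem (of which [kiralyJCT96, Corollary 2] is the relevant incarnation) for what is left.

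First I would note that, since $e_i$ and $f_i$ are parallel, joining the same two vertices $u_i,v_i$, an orientation points them in opposite directions if and only if the pair $\{e_i,f_i\}$ forms a digon on $\{u_i,v_i\}$. So the claim becomes: $G$ has a $k$-arc-strong orientation in which every $\{e_i,f_i\}$ is a digon. Applying Proposition \ref{undigon} once for each $i$ (replacing such a digon by a single undirected edge preserves $k$-arc-strength), this is equivalent to the following: the mixed graph obtained from $G$ by deleting $f_1,\dots,f_t$ and declaring $e_1,\dots,e_t$ to be permanently undirected admits an orientation of its remaining edges that is $k$-arc-strong. Writing $H:=G-\{e_i,f_i:1\le i\le t\}$ and letting $c(X)$ denote the number of indices $i$ with exactly one of $u_i,v_i$ in $X$ (equivalently, the number of the undirected edges $e_i$ crossing $X$), this says precisely that $H$ has an orientation $\vec H$ with $d^+_{\vec H}(X)\ge k-c(X)$ for every $\emptyset\ne X\subsetneq V$; by the definition of $k$-arc-strong for mixed graphs the matching lower bound on $d^-_{\vec H}(X)$ is then automatic.

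Now $c(\cdot)$ is a cut function, hence submodular, so $p(X):=k-c(X)$ is a (crossing-)supermodular cut demand, and the existence of an orientation of $H$ covering it is exactly the kind of statement handled by the standard orientation theorems via submodular-flow feasibility — this is where [kiralyJCT96, Corollary 2] enters, giving precisely the orientation we want. Its feasibility condition amounts to $\sum_{X\in\mathcal P}p(X)\le e_H(\mathcal P)$ over subpartitions $\mathcal P$ of $V$, and the point is that this is forced by the hypothesis: since $G$ has a $k$-arc-strong orientation it is $2k$-edge-connected by Theorem \ref{thm:nashWthm}, so $d_H(X)=d_G(X)-2c(X)\ge 2k-2c(X)=2p(X)$, and summing over the parts of $\mathcal P$ (each crossing pair costs $2$ in $d_G$ but only $1$ in the demand) yields the required inequality. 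This verification is routine; the substance is carried entirely by the orientation theorem.

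The main obstacle — and, I suspect, the reason the proposition is quoted rather than proved by an elementary reversal argument — is coordinating the $t$ pairs at once. For a single pair the argument is easy and direct: from a $k$-arc-strong orientation $\vec G$ of $G$ in which the pair fails to be a digon, both its arcs point, say, $u_1\to v_1$; reversing a directed cycle through $f_1$ (one exists since $\vec G$ is strong, and it cannot use $e_1$) leaves every $d^-(X)$ unchanged, hence preserves $k$-arc-strength, while turning the pair into a digon. But iterating this naively breaks down, because a cycle reversal that repairs one pair may use exactly one of the two arcs of an already-repaired pair and thereby destroy its digon. It is this simultaneity that the polyhedral/submodular-flow machinery underlying [kiralyJCT96, Corollary 2] resolves, which is why I would route the proof through it rather than attempt a bare induction on $t$.
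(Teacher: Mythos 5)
Your argument is correct in substance, but it is a genuinely different route from the paper's: the paper gives no proof at all, deriving Proposition \ref{oppo} in one line as a direct consequence of Corollary 2 of \cite{kiralyJCT96}, a result from the theory of (simultaneous) well-balanced orientations, from which the statement follows immediately because a well-balanced orientation of a $2k$-edge-connected graph is $k$-arc-strong. What you do instead is reduce, via the digon observation and Proposition \ref{undigon}, to orienting $H=G-\{e_i,f_i\}$ so as to cover the demand $p(X)=k-c(X)$, and then settle feasibility by an orientation theorem for supermodular demand functions, using Theorem \ref{thm:nashWthm} to get $d_H(X)=d_G(X)-2c(X)\geq 2p(X)$. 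That is a valid and essentially self-contained proof, but the theorem actually carrying it is Frank's orientation theorem for crossing $G$-supermodular demands (equivalently, submodular flow feasibility), not Corollary 2 of \cite{kiralyJCT96}: that paper concerns well-balanced orientations, which famously do not fit the submodular-flow framework, so your attribution should be replaced by a citation of Frank's theorem. Two small points to tighten in a careful write-up: first, with the convention $p(V)=0$ the function $p$ is only crossing (not intersecting) $G$-supermodular, so the correct feasibility condition is the partition and co-partition condition (sum of demands of the parts, and of their complements, at most the number of edges between distinct parts), rather than the subpartition condition you quote; since $c$, hence $p$, is symmetric, your displayed computation summed over the parts of a partition establishes both conditions, so nothing breaks. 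Second, $p$ is negative on sets crossed by more than $k$ pairs, so you should invoke a version of the orientation theorem allowing negative demands rather than truncating at zero, since truncation can destroy crossing supermodularity. With those citations and conditions stated precisely, your proof stands, and your closing diagnosis of why the one-pair cycle-reversal argument does not iterate is a fair explanation of why some global machinery (either Frank's theorem, as you use, or the well-balanced orientation result the paper quotes) is needed.
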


\begin{lemma}\label{lem:one}
    Let $G$ be a 2-edge-connected graph and $k$ a positive integer. Then $G$ can be made 4-edge-connected by doubling at most $k$ edges if and only if $G$ has a 2-arc-strong partial orientation in which at most $k$ edges remain unoriented.
\end{lemma}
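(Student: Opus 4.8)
The plan is to prove both directions by exploiting the correspondence between doubled edges and unoriented edges, with Proposition \ref{oppo} handling the conversion of ``good'' orientations into orientations where parallel pairs point in opposite directions.

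First I would prove the forward direction. Suppose $G$ becomes $4$-edge-connected after doubling edges $e_1,\dots,e_t$ with $t\le k$; call the resulting multigraph $G'$. By Theorem \ref{thm:nashWthm}, $G'$ being $4$-edge-connected has a $2$-arc-strong orientation. Now apply Proposition \ref{oppo} to the $t$ pairs of parallel edges $(e_i,e_i')$ (where $e_i'$ is the added copy): $G'$ has a $2$-arc-strong orientation $\vec{G'}$ in which each such pair is oriented in opposite directions. In the mixed graph obtained from $\vec{G'}$, each pair $(e_i,e_i')$ forms a digon; by Proposition \ref{undigon} we may replace each such digon by a single undirected edge without affecting $2$-arc-strongness. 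This yields a $2$-arc-strong mixed graph on the edge set of $G$ in which exactly the $t$ edges $e_1,\dots,e_t$ are unoriented, hence a $2$-arc-strong partial orientation of $G$ with at most $k$ unoriented edges.

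Conversely, suppose $G$ has a $2$-arc-strong partial orientation $M=(V,E-F,\vec F)$ with $|F|\le k$. Form $G'$ from $G$ by doubling exactly the edges in $F$. I would then construct an orientation of $G'$ from $M$: keep the orientation $\vec F$ on one copy of each $f\in F$, orient the second copy of $f$ in the reverse direction (creating a digon), and keep the given orientation on all edges of $E-F$. The resulting digraph $\vec{G'}$ is obtained from $M$ by replacing each undirected edge of $F$ by a digon, so by Proposition \ref{undigon} (applied in the reverse direction) $\vec{G'}$ is $2$-arc-strong, i.e.\ $d^+_{\vec{G'}}(X)\ge 2$ for every nonempty proper $X\subsetneq V$. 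Then $d^+_{\vec{G'}}(X)+d^-_{\vec{G'}}(X)\ge 2\cdot 2=4$? That overcounts; rather, $d_{G'}(X)=d^+_{\vec{G'}}(X)+d^-_{\vec{G'}}(X)\ge 2+2=4$ since both $d^+$ and $d^-$ are at least $2$ (apply $2$-arc-strongness to $X$ and to its complement). Hence $G'$ is $4$-edge-connected, and it was obtained from $G$ by doubling $|F|\le k$ edges.

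The main point requiring care — and the only genuine obstacle — is the forward direction's use of Proposition \ref{oppo}: I must check its hypothesis, namely that the doubled graph $G'$ admits a $k$-arc-connected orientation with $k=2$, which is exactly Theorem \ref{thm:nashWthm} applied to the $4$-edge-connected graph $G'$, and that the pairs $(e_i,e_i')$ are pairwise disjoint as edge pairs, which holds by construction since we may assume the doubled edges $e_1,\dots,e_t$ are chosen to be distinct (doubling an edge twice can be simulated and would only help, but in any case one takes a minimal doubling set). The remaining steps are routine applications of Propositions \ref{undigon}, \ref{oppo} and Theorem \ref{thm:nashWthm}, plus the elementary observation that an orientation $\vec H$ of a graph $H$ is $2$-arc-strong if and only if $H$ is $4$-edge-connected-witnessing in the sense that $\min(d^+,d^-)\ge 2$ on all cuts forces $d_H\ge 4$ on all cuts.
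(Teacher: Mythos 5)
Your proof is correct and follows essentially the same route as the paper: Theorem \ref{thm:nashWthm} plus Proposition \ref{oppo} and Proposition \ref{undigon} for the forward direction, and digon replacement plus Proposition \ref{undigon} for the converse. The only cosmetic difference is that where the paper cites Theorem \ref{thm:nashWthm} to get $4$-edge-connectivity of $G'$ from its $2$-arc-strong orientation, you re-derive this easy direction by the direct cut count $d_{G'}(X)=d^+_{\vec{G'}}(X)+d^-_{\vec{G'}}(X)\geq 4$, which is equivalent.
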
 
\begin{proof}
    First suppose that a 4-edge-connected graph $G'=(V,E')$ can be obtained from $G$ by doubling a set $F$ of at most $k$ edges. By Theorem \ref{thm:nashWthm}, there is a 2-arc-connected orientation $\vec{G'}$ of $G'$. Further, by Proposition \ref{oppo}, we may assume that for every $e \in F$, the two edges in $E'$ corresponding to $e$ are oriented in opposite directions in $\vec{G'}$. Now let $M$ be the mixed graph in which each of these pairs is replaced by a single undirected edge. By Proposition \ref{undigon}, we obtain that $M$ is 2-arc-strong. Further, $M$ is a partial orientation of $G$ in which only the edges of $F$, hence at most $k$ edges, remain unoriented.

    Now suppose that there is a 2-arc-strong partial orientation $M$ of $G$ in which the set $F$ of undirected edges is of size  at most $k$. Let $\vec{G'}$ be obtained from $M$ by replacing every undirected edge by a digon. By Proposition \ref{undigon}, we obtain that $\vec{G'}$ is 
    2-arc-strong. Let $G'$ be the underlying graph of $\vec{G'}$. By Theorem \ref{thm:nashWthm}, we obtain that $G'$ is 4-edge-connected. Further, $G'$ is obtained from $G$ by doubling the edges in $F$, hence at most $k$ edges.
\end{proof}

\begin{lemma}\label{lem:two}
    Let $G=(V,E)$ be a 2-vertex-connected graph and $k$ a positive integer. Then there is a 4-edge-connected graph $G'$ that can be obtained from $G$ by doubling at most $k$ edges for which $G'-v$ is 2-edge-connected for every $v\in V$ if and only if $G$ has a 2-strong partial orientation in which at most $k$ edges remain unoriented.
\end{lemma}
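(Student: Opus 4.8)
The plan is to follow the template of the proof of Lemma~\ref{lem:one}, using Thomassen's Theorem~\ref{thomassen} in the role that Theorem~\ref{thm:nashWthm} played there, and Proposition~\ref{undigon1} in the role of Proposition~\ref{undigon}. The only ingredient that is not already available is a replacement for Proposition~\ref{oppo}, and here the vertex-connectivity version is actually easier than the arc-connectivity one: I would use the elementary fact that \emph{if $D$ is a $2$-strong digraph and $e,f$ are parallel arcs of $D$ with the same tail $u$ and the same head $v$, then reversing $f$ yields a $2$-strong digraph} (so that $\{e,f\}$ becomes a digon). Granting this, both implications are short.

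For the implication ``partial orientation $\Rightarrow$ doubling'', suppose $M$ is a $2$-strong partial orientation of $G$ whose set $F$ of unoriented edges has $|F|\le k$. Replacing every undirected edge of $M$ by a digon gives a digraph $\vec{G'}$, which by repeated application of Proposition~\ref{undigon1} is again $2$-strong; its underlying graph $G'$ is obtained from $G$ by doubling exactly the edges of $F$. Since $\vec{G'}$ is a $2$-strong orientation of $G'$, the ``only if'' direction of Theorem~\ref{thomassen} gives that $G'$ is $4$-edge-connected and that $G'-v$ is $2$-edge-connected for every $v\in V$, which is what we wanted.

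For the converse, suppose $G'$ is obtained from $G$ by doubling a set $F$ of at most $k$ edges, $G'$ is $4$-edge-connected, and $G'-v$ is $2$-edge-connected for all $v$. By Theorem~\ref{thomassen}, $G'$ has a $2$-strong orientation $D$. For each $g\in F$ its two copies form a pair of parallel arcs of $D$; if these are oriented the same way, reverse one of them. By the reversal fact above each such move preserves $2$-strongness, and since the arc-pairs coming from distinct edges of $F$ are disjoint, carrying out all these reversals produces a $2$-strong orientation $D^{\ast}$ of $G'$ in which each of the $|F|$ doubled pairs is a digon. Finally, replace each of these digons by a single undirected edge; by Proposition~\ref{undigon1} the resulting mixed graph is still $2$-strong, and it is a partial orientation of $G$ in which precisely the edges of $F$ — hence at most $k$ edges — remain unoriented.

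The only non-boilerplate point, and hence the ``main obstacle'' (though a mild one), is verifying the reversal fact. The point is that a cut either does not separate $u$ from $v$, in which case $e,f$ are irrelevant and nothing changes, or it separates them, in which case \emph{both} $e$ and $f$ cross it in the same direction in $D$; so the side they leave has out-degree at least $2$ in $D-X$ for any $|X|\le 1$ with $u,v\notin X$, and stays at least $1$ after reversing one of them, while if $X\in\{\{u\},\{v\}\}$ then $D-X=D^{\ast}-X$ is strongly connected since both $e,f$ are deleted. I would spell this case check out in a line or two; it is routine, and it is exactly the reason no analogue of Proposition~\ref{oppo} (which for arc-connectivity is a genuinely nontrivial cited result) is needed here.
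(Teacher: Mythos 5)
Your proof is correct and follows essentially the same route as the paper's: obtain the orientation from Theorem~\ref{thomassen}, pass between digons and undirected edges via Proposition~\ref{undigon1}, and arrange each doubled pair to form a digon. The only (harmless) differences are that you explicitly verify the easy reversal fact which the paper dismisses with ``we may clearly assume,'' and that you use the necessity direction of Theorem~\ref{thomassen} where the paper invokes Theorem~\ref{thm:nashWthm}.
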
 
\begin{proof}
    First suppose that by doubling a set $F$ of at most $k$ edges of $G$ we can obtain  a 4-edge-connected graph $G'=(V,E')$ for which $G'-v$ is 2-edge-connected for every $v\in V$. By Theorem \ref{thomassen}, there is a 2-strong orientation $\vec{G'}$ of $G'$. Further,  we may clearly assume that for every $e \in F$, the two edges in $E'$ corresponding to $e$ are oriented in opposite directions. Now let $M$ be the mixed graph in which each of these pairs is replaced by a single undirected edge. By Proposition \ref{undigon1}, we obtain that $M$ is 2-strong. Further, $M$ is a partial orientation of $G$ in which only the edges of $F$, hence at most $k$ edges, remain unoriented.

    Now suppose that there is a 2-strong partial orientation $M$ of $G$ in which the set $F$ of undirected edges is of size  at most $k$. Let $\vec{G'}$ be obtained from $M$ by replacing every undirected edge by a digon. By Proposition \ref{undigon1}, we obtain that $\vec{G'}$ is 2-strong. Let $G'$ be the underlying graph of $\vec{G'}$. By Theorem \ref{thm:nashWthm}, we obtain that $G'$ is 4-edge-connected and since $\vec{G'}-v$ is strong for all $v \in V$, we get that $G'-v$ is 2-edge-connected. Further, $G'$ is obtained from $G$ by doubling the edges in $F$, hence at most $k$ edges.
\end{proof}

The rest of this section is structured as follows: In Section \ref{sec34}, we prove that the problem of doubling the minimum number of edges of a graph to obtain a 4-edge-connected graph is NP-hard. As this result also holds for the graph classes considered in Lemmas \ref{lem:one} and \ref{lem:two}, we obtain hardness results for the corresponding partial orientation problems. In Section \ref{approx34}, as a second application of Lemmas \ref{lem:one} and \ref{lem:two}, we obtain approximation algorithms for the partial orientation problems in consideration, relying on a result of Cecchetto, Traub and Zenklusen \cite{ctz}. Motivated by the result in Section \ref{sec34}, we study the problem of making a graph 3-edge-connected by doubling edges in Section \ref{sec23} and show that this problem can be solved in polynomial time.

\subsection{4-edge-connectivity augmentation by doubling edges}\label{sec34}
Formally, we consider the following problem where the choice of properties of the input graph $H$ is motivated by Lemmas \ref{lem:one} and \ref{lem:two}:

\medskip

\begin{tabular}{|ccc|}\hline
 & \begin{minipage}{14cm}
\vspace{2mm}

\noindent{}{\bf 4 Edge-Doubling Augmentation (4EDA)}
\vspace{2mm}

{\bf Input:} A  graph $H$ such that $H-v$ is 2-edge-connected for all $v \in V(H)$ and an integer $k$.

\vspace{2mm}

{\bf Question:} Can $H$ be made 4-edge-connected by doubling at most $k$ edges?

\vspace{2mm}

\end{minipage} & \\ \hline
\end{tabular}

\vspace{4mm}

The following is the main result of this section.
\begin{theorem}\label{34hard}
4EDA is NP-hard.
\end{theorem}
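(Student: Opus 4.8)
The plan is to reduce from a known NP-hard problem, and the natural candidate is a variant of \textsc{Vertex Cover} or \textsc{Hamiltonian}-type problem on a suitably restricted graph class; but since the target structure is 4-edge-connectivity obtained by doubling edges, I would more likely reduce from a covering problem phrased directly in terms of 3-edge cuts. Concretely, observe that a graph $H$ satisfying the hypothesis ($H-v$ is 2-edge-connected for all $v$, so in particular $H$ is 3-edge-connected after a moment's thought, or at least 2-edge-connected) fails to be 4-edge-connected exactly at its 3-edge-cuts (and possibly 2-edge-cuts, which the hypothesis is designed to forbid or control). Doubling an edge $e$ raises the size of every edge cut containing $e$ by one. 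So the problem becomes: find a minimum-size set $F \subseteq E(H)$ of edges such that every $3$-edge-cut of $H$ contains at least one edge of $F$ — i.e.\ a minimum \emph{hitting set} for the family of $3$-edge-cuts. I would set up the reduction so that the $3$-edge-cuts of the constructed graph $H$ encode the sets of an arbitrary \textsc{Set Cover} / \textsc{Vertex Cover} instance.

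The key steps, in order: (1) Start from an instance of a standard NP-hard problem — I would try \textsc{Vertex Cover} on cubic graphs, or \textsc{3-Dimensional Matching}, whichever makes the gadgets cleanest. (2) Build a gadget graph $H$ in which the nontrivial $3$-edge-cuts are in bijection with the "elements to be covered" and in which there is a controllable set of "selector" edges whose doubling hits exactly the cuts corresponding to a chosen vertex/set. A convenient building block: the structure of $3$-edge-cuts in a $3$-edge-connected graph has a tree-like (cactus-like) representation, so one can design $H$ so that its $3$-edge-cut structure is whatever tree/cactus we want, then hang small pendant-ish gadgets (kept $3$-edge-connected locally so the vertex-deletion hypothesis holds) to realize the incidences of the covering instance. (3) Verify the hypothesis: check that $H-v$ is $2$-edge-connected for every vertex $v$ of the construction — this forces the gadgets to be at least $3$-connected in the neighbourhood of each vertex, which constrains the design. (4) Prove the two directions: a cover of size $\le t$ yields a doubling set of size $\le t$ (double the selector edges of the chosen sets; every $3$-edge-cut is hit, and one argues no new small cut is created since doubling only increases cut sizes); conversely a doubling set of size $\le t$ can be pushed, without loss of generality, onto selector edges only (an exchange argument: if a doubled edge is not a selector edge, reroute it to a selector edge that covers a superset of the $3$-cuts it covered), giving a cover of size $\le t$. (5) Conclude that computing the minimum number of doublings is NP-hard, hence \textsc{4EDA} is NP-hard.

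The main obstacle I expect is step (3) together with the "no new small cut" half of step (4): the hypothesis that $H-v$ is $2$-edge-connected for every vertex is quite restrictive (it is exactly the side condition appearing in Thomassen's Theorem~\ref{thomassen} and in Lemmas~\ref{lem:one}--\ref{lem:two}), so the covering gadgets cannot be too sparse or tree-like locally — one must pad them with enough parallel connections that every vertex deletion leaves $2$-edge-connectivity, while still keeping the global $3$-edge-cut structure exactly equal to the incidence structure of the covering instance and not accidentally creating extra $3$-edge-cuts that would need hitting (which would break the correspondence). Getting a gadget that simultaneously (a) is "locally $3$-ish", (b) has precisely the intended family of global $3$-edge-cuts, and (c) lets a single doubled selector edge hit exactly the intended family of cuts, is the delicate combinatorial core of the proof. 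A secondary, more routine obstacle is bookkeeping: after doubling, the graph is a multigraph, and one must re-examine all edge cuts to confirm $4$-edge-connectivity; this is eased by the monotonicity observation that doubling never decreases any cut, so only the previously-tight ($2$- and $3$-) cuts need to be checked.
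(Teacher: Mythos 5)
Your overall strategy is the same as the paper's: you correctly reduce the question to a hitting problem for $3$-edge-cuts (and indeed, for graphs on at least a few vertices, the hypothesis that $H-v$ is $2$-edge-connected for every $v$ does force $H$ to be $3$-edge-connected, so doubling $F$ gives $4$-edge-connectivity exactly when $F$ meets every $3$-edge-cut), and you propose a reduction from Vertex Cover on cubic graphs with an exchange argument to normalize solutions, which is in the same spirit as the paper's reduction from VC via Proposition \ref{vcgdure} and its ``nice path'' normalization. However, the proposal stops precisely where the proof has to begin: you never produce the gadget, and you yourself identify the simultaneous requirements (vertex-deletion condition, controlled $3$-edge-cut structure, selector edges hitting the right cuts) as the unresolved ``delicate combinatorial core.'' That core is essentially the entire content of the paper's argument: the passage to doubly subdivided cubic graphs, the legal path decompositions (Propositions \ref{leg} and \ref{trivial}), the explicit construction of $H$ (the hub vertex $y$, the eleven-vertex gadget for each two-edge path, and the incidence edges $e_v$), the verification that $H-a$ is $2$-edge-connected (Lemma \ref{h-v}), and above all the complete enumeration of the $3$-edge-cuts of $H$ (Lemma \ref{cuts}). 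None of this is routine, and without a concrete construction the two directions in your step (4) cannot be carried out, so the proposal does not establish the theorem.

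A further point where your plan, as stated, would not work: you aim for a construction whose $3$-edge-cut structure is ``exactly equal to the incidence structure of the covering instance,'' i.e.\ a clean bijection between $3$-edge-cuts and elements to be covered, with no extraneous small cuts. Under the constraint that $H-v$ be $2$-edge-connected for every $v$, the gadgets inevitably contain many internal degree-three vertices and hence many additional $3$-edge-cuts (types (ii)--(iv) in Lemma \ref{cuts}). The paper does not avoid these; it arranges that hitting them costs a fixed amount per gadget (at least four internal doublings, see Claim \ref{2nice} and Subclaim \ref{5}), folds this fixed cost into the budget $k+|V(G)|$, and lets only the selector edges $e_v$ encode the vertex cover. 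Correspondingly, the exchange argument is more delicate than your ``replace a doubled edge by a selector edge covering a superset of its cuts'': when a gadget is not nice one shows its internal cost is at least five and swaps the whole internal set for a canonical four-edge set plus one selector edge. So your outline identifies the right reformulation and the right source problem, but the decisive construction, its cut analysis, and the correct budget accounting are missing.
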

Together with Lemmas \ref{lem:one} and \ref{lem:two}, Theorem \ref{34hard} immediately implies the following results for partial orientations.

\begin{corollary}
    Given a graph $G$ and a positive integer $k$, it is NP-hard to decide whether there is a $2$-arc-strong partial orientation of $G$ in which at least $k$ arcs are oriented.
\end{corollary}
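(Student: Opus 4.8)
The plan is to reduce directly from 4EDA, which is NP-hard by Theorem \ref{34hard}. Given an instance $(H,k)$ of 4EDA, I would produce the instance $(G,k'):=(H,\,|E(H)|-k)$ of the partial orientation problem; if $|E(H)|-k\le 0$, then $(H,k)$ is trivially a yes-instance of 4EDA (doubling every edge of the $2$-edge-connected graph $H$ yields a $4$-edge-connected graph), so this degenerate case is answered directly. The transformation $(H,k)\mapsto(H,|E(H)|-k)$ is clearly computable in polynomial time, so it suffices to verify that it carries yes-instances to yes-instances and no-instances to no-instances.

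The key observation is purely combinatorial: for a graph $G$, a partial orientation in which \emph{at least} $k'$ edges are oriented is literally the same object as a partial orientation in which \emph{at most} $|E(G)|-k'=k$ edges remain unoriented, since each edge is either oriented or left undirected. Next, the defining hypothesis of 4EDA — that $H-v$ is $2$-edge-connected for every vertex $v$ — forces $H$ itself to be $2$-edge-connected: a bridge or cut vertex of $H$ would survive the deletion of some other vertex once $|V(H)|\ge 3$, and the finitely many smaller instances are decidable in constant time. Hence Lemma \ref{lem:one} applies with $G=H$, giving: $G$ has a $2$-arc-strong partial orientation with at most $k$ unoriented edges $\iff$ $G$ can be made $4$-edge-connected by doubling at most $k$ edges. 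Chaining the two equivalences: $(G,k')$ is a yes-instance of the partial orientation problem $\iff$ $G$ has a $2$-arc-strong partial orientation with at most $k$ unoriented edges $\iff$ $(H,k)$ is a yes-instance of 4EDA, which completes the reduction.

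I do not expect a real obstacle; the argument is essentially bookkeeping on top of Lemma \ref{lem:one} and Theorem \ref{34hard}. The only two points meriting a line of care are (i) the complementarity of the two counting conventions, noted above, and (ii) that the threshold question ``at least $k'$ arcs can be oriented'' is well posed, i.e. monotone in $k'$: if a $2$-arc-strong mixed graph has an arc $a=uv$, then deorienting $a$ keeps it $2$-arc-strong, because for every set $X$ the quantity $d_A^+(X)+d_E(X)$ never decreases — if $u\in X\not\ni v$ one unit moves from $d_A^+(X)$ to $d_E(X)$, if $v\in X\not\ni u$ a unit is added to $d_E(X)$, and otherwise nothing changes. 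Thus having more edges oriented is the harder requirement, so the threshold formulation faithfully captures the optimum and the reduction is correct.
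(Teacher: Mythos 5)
Your proposal is correct and follows essentially the same route as the paper, which derives this corollary immediately from Theorem \ref{34hard} combined with Lemma \ref{lem:one}; your extra remarks (complementing the count of oriented versus unoriented edges, and checking that the 4EDA input domain is 2-edge-connected so that Lemma \ref{lem:one} applies) are exactly the bookkeeping the paper leaves implicit.
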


\begin{corollary}
    Given a graph $G$ and a positive integer $k$, it is NP-hard to decide whether there is a $2$-strong partial orientation of $G$ in which at least $k$ arcs are oriented.
\end{corollary}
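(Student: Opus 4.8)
The plan is to give a polynomial-time reduction from \textbf{4EDA}, which is \textsf{NP}-hard by Theorem~\ref{34hard}, to the decision problem in the statement, using the equivalence established in Lemma~\ref{lem:two}. The one point that needs care is that Lemma~\ref{lem:two} speaks about $4$-edge-connected graphs $G'$ obtained by doubling edges that \emph{additionally} satisfy ``$G'-v$ is $2$-edge-connected for every $v$'', whereas \textbf{4EDA} only asks for $4$-edge-connectedness; I would first observe that for the inputs admitted by \textbf{4EDA} this extra condition is automatic, so the two problems line up.

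First I would record two simple facts about an instance $(H,k)$ of \textbf{4EDA}. Since $H-v$ is $2$-edge-connected for every $v\in V(H)$, the graph $H$ has no cut vertex and is connected, hence (after disposing of the trivial cases $|V(H)|\le 2$ separately) $H$ is $2$-vertex-connected. Secondly, if $G'$ is any graph obtained from $H$ by doubling a set of edges, then for each $v$ the multigraph $G'-v$ contains $H-v$ as a spanning subgraph, and adding edges cannot decrease edge-connectivity, so $G'-v$ is again $2$-edge-connected. Consequently $(H,k)$ is a yes-instance of \textbf{4EDA} if and only if there is a $4$-edge-connected graph $G'$ obtained from $H$ by doubling at most $k$ edges such that $G'-v$ is $2$-edge-connected for all $v\in V(H)$.

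Now I would set $G:=H$ and $k':=|E(H)|-k$; this transformation is computable in polynomial time, and for $k\ge |E(H)|$ (equivalently $k'\le 0$) both problems are trivially decidable, so we may assume $0<k'<|E(H)|$. Applying Lemma~\ref{lem:two} to the $2$-vertex-connected graph $G$, the condition in the previous paragraph holds if and only if $G$ has a $2$-strong partial orientation in which at most $k$ edges remain unoriented, i.e.\ in which at least $|E(G)|-k=k'$ edges are oriented. Chaining these equivalences, $(H,k)$ is a yes-instance of \textbf{4EDA} if and only if $(G,k')$ is a yes-instance of the problem in the statement; together with Theorem~\ref{34hard} this proves \textsf{NP}-hardness.

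The only genuinely non-routine step is the second observation above, namely that doubling edges can never destroy the property ``$H-v$ is $2$-edge-connected for all $v$''. This is precisely why the input restriction built into \textbf{4EDA} in Theorem~\ref{34hard} matches the hypotheses of Lemma~\ref{lem:two}, and it is what lets the corollary fall out of the doubling hardness result without any new gadget construction. Everything else is bookkeeping: checking $2$-vertex-connectivity of $H$ and translating the bound ``at most $k$ unoriented'' into ``at least $k'$ oriented''.
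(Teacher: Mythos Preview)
Your proposal is correct and takes essentially the same approach as the paper, which derives the corollary directly from Theorem~\ref{34hard} and Lemma~\ref{lem:two}. You have simply made explicit the two points the paper leaves tacit: that the input condition of \textbf{4EDA} forces $H$ to be $2$-vertex-connected, and that doubling edges preserves the property ``$G'-v$ is $2$-edge-connected for all $v$'', so Lemma~\ref{lem:two} applies without further work.
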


The rest of Section \ref{sec34} is concerned with proving Theorem \ref{34hard} by a reduction from a variation of vertex cover. In Section \ref{vcprel}, we introduce this variation of vertex cover and show that it remains hard indeed. In Section \ref{constr}, we describe our construction and prove some of its important properties. In Section \ref{demo}, we show that the reduction works indeed.

\subsubsection{Preliminaries on vertex cover}\label{vcprel}

The vertex cover problem can be described as follows:

\medskip

\begin{tabular}{|ccc|}\hline
 & \begin{minipage}{14cm}
\vspace{2mm}

\noindent{}{\bf Vertex Cover (VC)}
\vspace{2mm}

{\bf Input:} A graph $G$, an integer $k$.

\vspace{2mm}

{\bf Question:} Is there a set $S \subseteq V(G)$ with $|S|\leq k$ such that $S$ contains at least one endvertex of $e$ for all $e \in E(G)$?

\vspace{2mm}

\end{minipage} & \\ \hline
\end{tabular}

\vspace{4mm}
The following result is well-known.
\begin{theorem}\label{vcdure}\cite{gj}
VC is NP-hard for cubic 2-vertex-connected graphs.
\end{theorem}

Let $\mathcal{G}$ be the class of graphs that arise from a cubic 2-vertex-connected graph by subdividing every edge twice.

\begin{proposition}\label{vcgdure}
VC is NP-hard for graphs in $\mathcal{G}$.
\end{proposition}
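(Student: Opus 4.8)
The plan is to reduce from VC restricted to cubic $2$-vertex-connected graphs, which is NP-hard by Theorem~\ref{vcdure}. Given such a graph $G$, let $G'$ be obtained from $G$ by subdividing every edge exactly twice; then $G' \in \mathcal{G}$ by definition of $\mathcal{G}$, and $G'$ is constructible from $G$ in polynomial time. For each edge $e = uv \in E(G)$ write $P_e = u\,x_e\,y_e\,v$ for the path of length $3$ that replaces $e$ in $G'$, with internal vertices $x_e, y_e$; these paths are internally disjoint and their internal vertices are precisely the vertices of $V(G') \setminus V(G)$. Write $\tau(\cdot)$ for the minimum size of a vertex cover. I will show that $\tau(G') = \tau(G) + |E(G)|$; granting this, $(G,k)$ is a yes-instance of VC if and only if $(G', k + |E(G)|)$ is one, which proves the proposition, since $|E(G)| = \tfrac{3}{2}|V(G)|$ is computable in polynomial time.

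For the inequality $\tau(G') \le \tau(G) + |E(G)|$, let $S$ be a minimum vertex cover of $G$. For each edge $e = uv \in E(G)$, add $x_e$ to $S$ if $u \notin S$ (so that $v \in S$, as $S$ is a vertex cover) and add $y_e$ otherwise; a short check shows the resulting set covers all three edges of each $P_e$, hence is a vertex cover of $G'$ of size $|S| + |E(G)|$. For the reverse inequality, let $S'$ be any vertex cover of $G'$ and set $S := S' \cap V(G)$. Fix $e = uv \in E(G)$. Covering the edge $x_e y_e$ forces $x_e \in S'$ or $y_e \in S'$; additionally, $u \notin S'$ forces $x_e \in S'$ (to cover $u x_e$) and $v \notin S'$ forces $y_e \in S'$ (to cover $y_e v$). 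Hence if $u, v \notin S'$ then both $x_e, y_e \in S'$, and otherwise at least one of them is in $S'$. Put $B := \{\, e = uv \in E(G) : u, v \notin S \,\}$; summing these contributions over all edges gives $|S'| \ge |S| + 2|B| + (|E(G)| - |B|) = |S| + |E(G)| + |B|$. Now $S$ covers every edge of $G$ outside $B$, so adjoining to $S$ one endpoint of each edge of $B$ yields a vertex cover of $G$ of size at most $|S| + |B| \le |S'| - |E(G)|$. Thus $\tau(G) \le \tau(G') - |E(G)|$, and the claimed equality follows.

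The only point that needs genuine care is the case analysis counting how many internal vertices of each subdivided path any vertex cover of $G'$ must contain; the remainder is bookkeeping. Observe that no property of $G$ is used beyond its being a cubic $2$-vertex-connected graph, which is exactly what guarantees $G' \in \mathcal{G}$ and lets us invoke Theorem~\ref{vcdure}.
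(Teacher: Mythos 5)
Your proof is correct and takes essentially the same route as the paper: the same reduction from VC on cubic $2$-connected graphs via subdividing every edge twice, with the same threshold shift by $|E(G)|$. The only (cosmetic) difference is in the backward direction, where you prove $\tau(G')=\tau(G)+|E(G)|$ by a direct count (charging an extra $|B|$ for edges both of whose subdivision vertices lie in the cover and then repairing $S'\cap V(G)$ with one endpoint per such edge), whereas the paper first normalizes the cover by an exchange/minimality argument so that it uses at most one subdivision vertex per edge; both arguments are sound.
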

\begin{proof}
Let $G$ be a 2-connected, cubic graph, $k$ a positive integer and $G'$ the graph which arises from $G$ by subdividing every edge twice. By definition, we have $G' \in \mathcal{G}$. We now show that $(G',k+|E(G)|)$ is a positive instance of VC if and only if $(G,k)$ is a positive instance of VC.

First suppose that $(G,k)$ is a positive instance of VC, so there is a vertex cover $S \subseteq V(G)$ with $|S|\leq k$. By definition, for every $e=uv\in E(G)$, at least one of $u$ and $v$ is in $S$. Hence at most one of the two subdivision vertices of $e$ does not have a neighbor in $S$ in $G'$. We create $S'$ by adding this vertex to $S$, choosing an arbitrary one of the two subdivision vertices if both $u$ and $v$ are contained in $S$. It is easy to see that $S'$ is a vertex cover of $G'$ and satisfies $|S'|=|E(G)|+|S|\leq |E(G)|+k$.

Now suppose that $(G',k+|E(G)|)$ is a positive instance of VC, so there is a vertex cover $S' \subseteq V(G')$ with $|S'|\leq k+|E(G)|$. We may suppose that $S'$ is chosen so that the number of edges $e \in E(G)$ such that $S'$ contains both subdivision vertices of $e$ is minimized. Suppose that $S'$ contains both subdivision vertices of some edge $e=uv \in E(G)$. Let $S''$ be obtained from $S'$ by deleting the subdivision vertex of $e$ which is a neighbor of $u$ and adding $u$ if it is not yet contained. Then $S''$ is a vertex cover of $G'$ of at most the same size as $S'$, a contradiction to the choice of $S'$. Hence for every $e \in E(G)$, $S'$ contains at most one of the subdivision vertices of $e$. Now let $S=S'\cap V(G)$. It is easy to see that $S$ is a vertex cover of $G$ that satisfies $|S|=|S'|-|E(G)|\leq k$. This finishes the proof by Theorem \ref{vcdure}.
\end{proof}

For a graph $G \in \mathcal{G}$, a {\bf legal} path decomposition of $G$ is a set of subpaths $\mathcal{P}=\mathcal{P}_1 \cup \mathcal{P}_2$ of $G$ with the following properties:

\begin{itemize}
    \item $\{E(P):P \in \mathcal{P}\}$ is a partition of $E(G)$,
    \item every $P \in \mathcal{P}_i$ contains exactly $i$ edges for $i=1,2$,
    \item every $v \in V(G)$ is contained in exactly two paths of $\mathcal{P}$.
\end{itemize}

\begin{proposition}\label{leg}
Every $G \in \mathcal{G}$ has a legal path decomposition.
\end{proposition}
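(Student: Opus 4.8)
Let $G \in \mathcal{G}$, so $G$ arises from a cubic $2$-connected graph $H$ by subdividing every edge twice. The plan is to build the legal path decomposition directly from the structure of $H$. Every edge $e = xy \in E(H)$ becomes a path $x, a_e, b_e, y$ in $G$ of length $3$, where $a_e, b_e$ are the two subdivision vertices; the three internal-style edges of this path are $x a_e$, $a_e b_e$, $b_e y$. The key observation is that $V(G)$ splits into the \emph{old} vertices (those of $H$, each of degree $3$ in $G$) and the \emph{subdivision} vertices $a_e, b_e$ (each of degree $2$ in $G$), and $E(G)$ is partitioned into the triples of edges coming from the edges of $H$.

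**The construction.**

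First I would handle the middle edges: for each $e \in E(H)$ put the single edge $a_e b_e$ into $\mathcal{P}_1$. That is $|E(H)|$ paths of length $1$, and it uses up each subdivision vertex once. It remains to cover, for each $e = xy$, the two "pendant" edges $x a_e$ and $b_e y$, and to do so with length-$2$ paths, using each old vertex exactly twice and each subdivision vertex exactly once more. Around a fixed old vertex $x$ of $H$ with incident edges $e_1, e_2, e_3$, the three edges of $G$ incident to $x$ are $x a_{e_1}, x a_{e_2}, x a_{e_3}$ (relabelling the subdivision endpoint nearer $x$ as $a_{e_i}$). A length-$2$ path through $x$ pairs up two of these three edges. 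So what I need is: for each old vertex, a pairing of two of its three incident $G$-edges into one $P_2$-path, such that globally every such pendant edge is used exactly once. Each pendant edge $x a_e$ has its two endpoints $x$ (old) and $a_e$ (subdivision, degree $2$, already carrying $a_e b_e$). Since $a_e$ still has one unused edge slot, we may route the $P_2$ through $a_e$ as well: i.e. form length-$2$ paths of the shape $a_e, x, a_{e'}$ where $e, e'$ are two edges of $H$ sharing the vertex $x$. Choosing, for every vertex $x$ of $H$ of degree $3$, exactly one of the $\binom{3}{2}$ pairings, we use exactly two of the three pendant edges at $x$; the third pendant edge $x a_{e_3}$ must then be the one that gets paired at its \emph{other} relevant vertex — but its other endpoint is $a_{e_3}$, which is already full. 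This is the obstruction, and it forces a cleaner choice.

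**Fixing the routing.**

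The clean fix is to not route $P_2$-paths through subdivision vertices at all for the pendant edges, but to pair pendant edges \emph{at old vertices only}, which is exactly an (near-)perfect matching / pairing condition on the edges of $H$. Concretely: an orientation of $H$ (which exists) assigns to each edge $e = xy$ a head and a tail; direct the subdivision path accordingly. For each old vertex $x$, among its three incident $H$-edges, in any orientation either $1$ or $2$ of them leave $x$. Since $|V(H)|$ is even (cubic graph) and $\sum_x (\text{out-deg}) = |E(H)|$, one checks there is an \emph{Eulerian-type} orientation is not available, so instead I would argue via edge-colouring: a cubic $2$-connected graph need not be $3$-edge-colourable, but it does have a \emph{perfect matching} $\mathsf{M}$ (Petersen's theorem). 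The two remaining edges at each vertex form a $2$-factor, i.e. a disjoint union of cycles $C$. Now build $\mathcal{P}_2$ as follows: for each matching edge $e = xy \in \mathsf{M}$, take the two pendant edges $x a_e$ and $b_e y$ as two \emph{separate} jobs to be absorbed by the cycle-structure; for each cycle $C = v_1 v_2 \cdots v_r v_1$ in the $2$-factor, with $G$-edges $v_i a_{i}, b_i v_{i+1}$ (indices mod $r$) along the subdivided cycle, pair consecutively: $\{a_i b_i\}$ is already in $\mathcal{P}_1$ (it was a middle edge), and we pair $b_i v_{i+1}$ with $v_{i+1} a_{i+1}$ into a $P_2$ through $v_{i+1}$. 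Going around $C$ this covers every pendant edge on $C$ exactly once and uses each cycle-vertex exactly once in a $P_2$. Finally each old vertex $x$ lies on exactly one matching edge and one cycle, giving it one $P_2$-appearance from the cycle construction and one more appearance to be supplied by a $P_2$ through $x$ along the matching direction: pair $x a_e$ with... — here we are again one edge short, so in fact the matching pendant edges $x a_e, b_e y$ should each be appended onto the cycle-paths at $x$ and at $y$ respectively, turning two of the cycle's length-$2$ paths into length-... no.

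I will instead take the most robust route: prove existence by a \textbf{parity/counting argument plus a direct greedy matching}, as follows. Consider the auxiliary graph $L$ on vertex set $E(H)$ where $e \sim e'$ iff $e, e'$ share an endpoint in $H$; since $H$ is cubic, $L$ is $4$-regular, hence has an even number of edges incident at each vertex, hence (Petersen / Eulerian) $L$ decomposes into $2$-factors, and in particular $L$ has a proper partition of its edge set into two $2$-factors; pick one $2$-factor $F$ of $L$ — it selects, at each vertex $x$ of $H$, exactly one pair of the three $H$-edges at $x$ \emph{twice over}, i.e. it is a union of cycles in $L$ hitting each $H$-edge twice, giving precisely the two required $P_2$-endpoint-pairings per old vertex. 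Translating an edge of $F$ joining $e, e'$ at common vertex $x$ into the $G$-path $(a_e, x, a_{e'})$ (using the appropriate pendant edges), and checking that each pendant $G$-edge $x a_e$ is used exactly once because $e$ has degree exactly $2$ in $F$ and its two $F$-edges use its two old endpoints, yields $\mathcal{P}_2$; together with the $\mathcal{P}_1$ of middle edges this is a legal path decomposition. \textbf{The main obstacle} is getting this bookkeeping exactly right — ensuring simultaneously that the edge partition is exact, that every old vertex is in exactly two paths, and that every subdivision vertex is in exactly two paths — and I expect the cleanest write-up routes all length-$2$ paths through old vertices only, reducing the whole statement to: "the line-graph-type structure on $E(H)$ admits a partition into cycles covering each element twice," which follows since that graph is $4$-regular.
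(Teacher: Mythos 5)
Your write-up never arrives at a valid construction, and the route you finally commit to cannot work. The underlying misreading is that you try to cover \emph{every} pendant edge $xa_e$ by a length-2 path. A legal decomposition does not ask for this: since all middle edges $a_eb_e$ are singletons, the condition that each vertex lies in exactly two paths forces each old (degree-3) vertex $x$ to be the centre of \emph{exactly one} path of $\mathcal{P}_2$, using two of its three pendant edges, while the third pendant edge at $x$ must remain a singleton of $\mathcal{P}_1$. Covering all three pendant edges at $x$ by length-2 paths centred at $x$ is impossible by parity (each such path uses two of the three), so the goal you set yourself in the last paragraph is unattainable, not merely hard to bookkeep. Concretely, your 2-factor $F$ of the 4-regular graph $L$ on $E(H)$ has $|E(H)|=\tfrac32|V(H)|$ edges, hence would yield $\tfrac32|V(H)|$ length-2 paths; some old vertex would then be the centre of two of them, and any two distinct pairs chosen from its three incident edges share an edge, so these paths cannot be edge-disjoint. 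Moreover, the property you invoke to get "each pendant edge used exactly once" --- that at each vertex $e$ of $L$ the two $F$-edges go via the two distinct endpoints of $e$ --- does not hold for an arbitrary 2-factor of $L$, since the four $L$-edges at $e$ split into two triangles and a 2-factor may take both of its edges at $e$ from the same triangle.

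The statement needs none of this machinery. Because every edge of $H$ is subdivided \emph{twice}, every edge of $G$ has at most one endpoint of degree 3, so the edge sets $\delta_G(v)$ over degree-3 vertices $v$ are pairwise disjoint. The paper's proof simply chooses, at each degree-3 vertex, two arbitrary incident edges as a path of $\mathcal{P}_2$ and declares every remaining edge a singleton of $\mathcal{P}_1$: the paths are automatically edge-disjoint, each degree-3 vertex lies in its $\mathcal{P}_2$-path plus the singleton of its third edge, and each subdivision vertex lies in the path containing its pendant edge plus the singleton of its middle edge. Your first attempt (middle edges as singletons, then pairing pendant edges at old vertices) was one step away from this; the fix is not to insist on pairing all three pendant edges, but to leave one of them unpaired at each old vertex.
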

\begin{proof}
For every vertex $v$ of $G$ of degree 3, choose two arbitrary edges of $G$ which are incident with $v$ and add the corresponding path to $\mathcal{P}_2$. For all remaining edges of $G$, add the path that contains only this edge to $\mathcal{P}_1$. It is easy to see that $\mathcal{P}_1\cup \mathcal{P}_2$ has the desired properties.
\end{proof}

\begin{proposition}\label{trivial}
Let $G \in \mathcal{G}$ and let $\mathcal{P}=\mathcal{P}_1 \cup \mathcal{P}_2$ be a legal path decomposition of $G$. Then $|\mathcal{P}_1|=\frac{5}{8}|V(G)|, |\mathcal{P}_2|=\frac{1}{4}|V(G)|$ and every vertex is contained in at most one path of $\mathcal{P}_2$.
\end{proposition}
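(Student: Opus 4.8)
The plan is to do a straightforward counting argument based on the structure of graphs in $\mathcal{G}$ and the axioms of a legal path decomposition. First I would record the basic parameters of a graph $G \in \mathcal{G}$: if $G$ arises from a cubic $2$-connected graph $H$ with $n := |V(H)|$ vertices and hence $m := \frac{3n}{2}$ edges, then subdividing every edge twice adds $2m = 3n$ new vertices of degree $2$, so $|V(G)| = n + 3n = 4n$, of which exactly $n$ have degree $3$ and exactly $3n$ have degree $2$. Also $|E(G)| = 3m = \frac{9n}{2}$. In particular $n = \frac14|V(G)|$, the number of degree-$2$ vertices is $\frac34|V(G)|$, and $|E(G)| = \frac98|V(G)|$.

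Next I would extract two linear equations from the definition of a legal path decomposition $\mathcal{P} = \mathcal{P}_1 \cup \mathcal{P}_2$. Since $\{E(P) : P \in \mathcal{P}\}$ partitions $E(G)$ and each $P \in \mathcal{P}_i$ has exactly $i$ edges, counting edges gives
\[
|\mathcal{P}_1| + 2|\mathcal{P}_2| = |E(G)| = \tfrac98|V(G)|.
\]
For the second equation I count incidences between vertices and paths. Each vertex lies in exactly two paths of $\mathcal{P}$, so the total number of (vertex, path) incidences is $2|V(G)|$. On the other hand a path with $i$ edges has $i+1$ vertices, so summing over paths gives $\sum_{P \in \mathcal{P}_1} 2 + \sum_{P \in \mathcal{P}_2} 3 = 2|\mathcal{P}_1| + 3|\mathcal{P}_2|$. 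Hence
\[
2|\mathcal{P}_1| + 3|\mathcal{P}_2| = 2|V(G)|.
\]
Solving this $2\times 2$ linear system: from the first equation $|\mathcal{P}_1| = \frac98|V(G)| - 2|\mathcal{P}_2|$; substituting into the second yields $2(\frac98|V(G)| - 2|\mathcal{P}_2|) + 3|\mathcal{P}_2| = 2|V(G)|$, i.e. $\frac94|V(G)| - |\mathcal{P}_2| = 2|V(G)|$, so $|\mathcal{P}_2| = \frac14|V(G)|$ and then $|\mathcal{P}_1| = \frac98|V(G)| - \frac12|V(G)| = \frac58|V(G)|$, as claimed.

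For the final assertion that every vertex lies in at most one path of $\mathcal{P}_2$, I would argue as follows. A degree-$2$ vertex of $G$ is incident with only two edges of $G$; since it lies in exactly two paths and the edge sets of the paths partition $E(G)$, its two incident edges must be distributed among its two paths, so each of its two paths uses exactly one of its incident edges at that vertex. But a $2$-edge path $P \in \mathcal{P}_2$ through a vertex $v$ uses, at its internal vertex, \emph{both} of that vertex's edges of $P$; for $v$ to be internal to $P$ and also lie in a second path, $v$ would need degree at least $3$. A degree-$2$ vertex can only be an endpoint of a path in $\mathcal{P}_2$, hence can lie in at most one path of $\mathcal{P}_2$ (a second such path, or being internal, would force degree $\ge 3$ — more precisely, if $v$ had degree $2$ and lay in two paths both in $\mathcal{P}_2$, those four path-edge-ends at $v$ would require four distinct incident edges). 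For a degree-$3$ vertex $v$: it lies in exactly two paths $P, P'$ and its three incident edges are partitioned between them, so one path gets two of them and the other gets one. The path getting two edges at $v$ has $v$ as an internal vertex; the path getting one edge at $v$ has $v$ as an endpoint, hence that latter path has $v$ as a \emph{leaf}, and a path in $\mathcal{P}_2$ has only two leaves, while a path in $\mathcal{P}_1$ has exactly two leaves and one edge — either way, the path with only one edge at $v$ that contains $v$ can contain at most one more vertex-edge. The cleanest phrasing: a vertex lying in two paths of $\mathcal{P}_2$ would be internal to at most one of them (being internal to both needs degree $\ge 4$) and a leaf of the other, but then... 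I would instead simply observe that the total count $\sum_{v} |\{P \in \mathcal{P}_2 : v \in P\}| = 3|\mathcal{P}_2| = \frac34|V(G)|$ equals the number of degree-$2$... no. The main obstacle is phrasing this last point crisply; I expect the intended argument is the incidence/degree count above, and I would write it as: each path in $\mathcal{P}_2$ contributes $1$ to the path-count of each of its three vertices, so $\sum_{v \in V(G)} c(v) = 3|\mathcal{P}_2| = \tfrac34 |V(G)|$ where $c(v)$ is the number of $\mathcal{P}_2$-paths containing $v$; since $c(v) \le 2$ always and $c(v) = 2$ forces $v$ to have degree $\ge 3$ (being a non-endpoint of one of the two paths, or using $\ge 3$ incident edges), the only way to reach this sum given there are exactly $\frac14|V(G)|$ vertices of degree $3$ is to have $c(v) \le 1$ for every vertex — a short extremal argument finishing the proof.
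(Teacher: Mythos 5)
Your double-counting derivation of $|\mathcal{P}_1|=\tfrac58|V(G)|$ and $|\mathcal{P}_2|=\tfrac14|V(G)|$ is correct, and it is a genuinely different (arguably cleaner) route than the paper, which instead counts middle vertices of paths in $\mathcal{P}_2$. The gap is in the last assertion. The step that fails is your claim that $c(v)=2$ forces $\deg(v)\geq 3$ (where $c(v)$ is, as in your notation, the number of paths of $\mathcal{P}_2$ containing $v$): a degree-$2$ vertex could a priori be an \emph{endpoint} of two distinct paths of $\mathcal{P}_2$, each using exactly one of its two incident edges, so only two incident edges are needed; your earlier parenthetical about four distinct incident edges overcounts, since an endpoint of a $2$-edge path uses only one edge at that vertex. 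Moreover, even granting that claim, the extremal count does not close: $\sum_v c(v)=3|\mathcal{P}_2|=\tfrac34|V(G)|$ together with $c(v)\le 2$ and the restriction that $c(v)=2$ can occur only at the $\tfrac14|V(G)|$ degree-$3$ vertices is perfectly compatible with some $c(v)=2$, because nothing forces $c(v)\ge 1$ at the remaining vertices; and note the proposition also forbids $c(v)=2$ at degree-$3$ vertices, which your criterion would permit.

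The missing idea is structural rather than counting-based, and it is precisely where membership in $\mathcal{G}$ (every edge subdivided \emph{twice}) is used. First, as in the paper, no degree-$2$ vertex can be the middle vertex of a path of $\mathcal{P}_2$: both of its edges would lie in that single path, leaving no edge at it for the second path it must belong to. Hence every middle vertex of a path of $\mathcal{P}_2$ has degree $3$. Now suppose some vertex $v$ lay in two paths of $\mathcal{P}_2$. If $\deg(v)=3$, the $2{+}1$ split of its edges makes $v$ the middle of one path and an endpoint of the other, whose middle is then a neighbour of $v$; but in $G\in\mathcal{G}$ every neighbour of a degree-$3$ vertex is a degree-$2$ subdivision vertex, a contradiction. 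If $\deg(v)=2$, then $v$ is an endpoint of both paths, they use distinct edges at $v$, and $v$ has exactly one degree-$3$ neighbour, so the middle of one of the two paths is again a degree-$2$ vertex, a contradiction. This genuinely needs the double subdivision (after a single subdivision a degree-$2$ vertex has two degree-$3$ neighbours and the argument breaks), so the final assertion cannot be recovered from the decomposition axioms and degree counts alone, which is what your sketch attempts.
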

\begin{proof}
Let $G'$ be the graph from which $G$ is obtained by subdividing every edge twice. Then, by the last property of legal decompositions, we obtain that every $v\in V(G')$ is the middle vertex of exactly one path in $\mathcal{P}_2$ and no vertex in $V(G)-V(G')$ is the middle vertex of a path in $\mathcal{P}_2$. We obtain $|\mathcal{P}_2|=|V(G')|=\frac{1}{4}|V(G)|$. We further have $|\mathcal{P}_1|=|E(G)|-2|\mathcal{P}_2|=3|E(G')|-2|\mathcal{P}_2|=\frac{9}{2}|V(G')|-2|V(G')|=\frac{5}{2}|V(G')|=\frac{5}{8}|V(G)|$. The fact that the middle vertex of every path in $\mathcal{P}_2$ is in $V(G')$ implies the second property.
\end{proof}
\subsubsection{The construction and the main lemma}\label{constr}

This section contains the first part of the proof of Theorem \ref{34hard}. Based on Proposition \ref{vcgdure}, we proceed by a reduction from VC with the additional assumption that the input graph is in $\mathcal{G}$. Let $(G,k)$ be an instance of VC with $G \in \mathcal{G}$. Clearly, we may suppose that $|V(G)|\geq 5$. By Proposition \ref{leg}, there is a legal path decomposition $\mathcal{P}=\mathcal{P}_1 \cup \mathcal{P}_2$ of $G$ which can easily be computed in polynomial time. We now create a graph $H$. First, for every $P \in \mathcal{P}_1$, we let $V(H)$ contain a vertex $x_P$. For every $P=uvw \in \mathcal{P}_2$, we let $H$ contain a path gadget with vertex set $X_P=\{x_P^{u},x_P^{v},x_P^{w},x_P^1,\ldots,x_P^8\}$ and edges $x_P^{u}x_P^2,x_P^{u}x_P^3,x_P^{u}x_P^4,x_P^{u}x_P^5,x_P^{v}x_P^1,x_P^{v}x_P^8,x_P^{w}x_P^7,x_P^{w}x_P^8,x_P^{1}x_P^2,x_P^{1}x_P^8,x_P^{2}x_P^3,x_P^{3}x_P^4,x_P^{4}x_P^5,x_P^{5}x_P^6,\\
x_P^{6}x_P^7,x_P^{6}x_P^8,x_P^{7}x_P^8$. Observe that the roles of $u$ and $w$ could be exchanged in this construction. However, this ambiguity will be of no effect. An illustration can be found in Figure \ref{ajouter!}.

\begin{figure}[h]
    \centering
        \includegraphics[width=.4\textwidth]{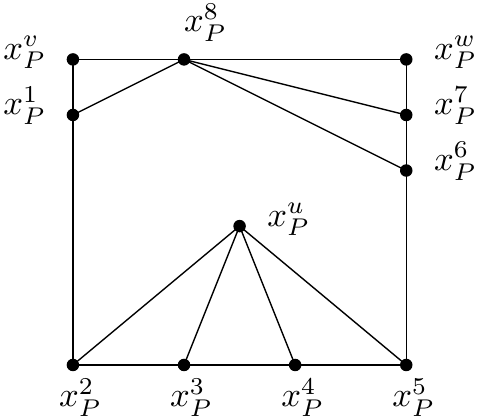}
        \caption{An example for a gadget for a path $P$ 
        in $\mathcal{P}_2$.}\label{ajouter!}
\end{figure}

Next, we add a vertex $y$ and an edge linking $y$ and $x_P$ for all $P \in \mathcal{P}_1$. Finally, for every $v \in V(G)$ that is contained in two paths $P,P'\in \mathcal{P}_1$, we add an edge $e_v$ linking $x_P$ and $x_{P'}$ and for every $v \in V(G)$ that is contained in a path $P\in \mathcal{P}_1$ and a path $P'\in \mathcal{P}_2$, we add an edge $e_v$ linking $x_P$ and $x_{P'}^v$. Observe that, as $\mathcal{P}$ is legal and by Proposition \ref{trivial}, this operation is well-defined and we have added exactly one edge $e_v$ for every $v \in V(G)$. This finishes the description of $H$. An illustration can be found in Figure \ref{ajouteraussi}. 

\begin{figure}[H]
    \centering
        \includegraphics[width=.7\textwidth]{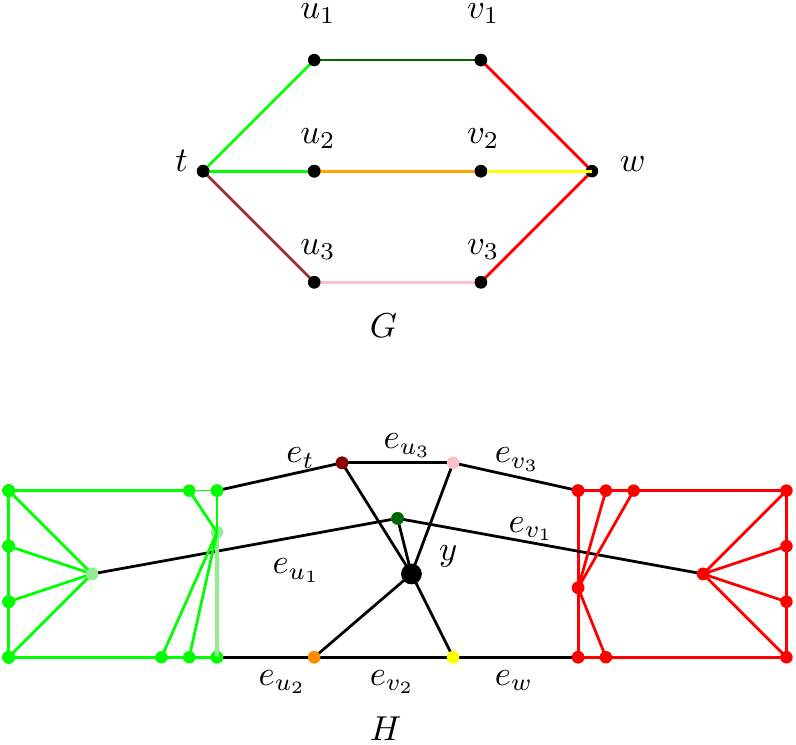}
        \caption{An example for the construction of $H$. The colors mark the legal decomposition $\mathcal{P}$.}\label{ajouteraussi}
\end{figure}

The following result shows that $H$ is indeed contained in the desired input domain.

\begin{lemma}\label{h-v}
    $H-a$ is 2-edge-connected for all $a \in V(H)$.
\end{lemma}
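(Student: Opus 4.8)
The plan is to show that $H - a$ is 2-edge-connected for every vertex $a$ by first exhibiting a ``backbone'' cycle that survives the removal of any single vertex, and then checking that every remaining vertex attaches to that backbone by two edge-disjoint paths avoiding $a$. Concretely, I would first analyze the internal structure of a single $\mathcal{P}_2$-gadget $H[X_P]$: the claim is that for every vertex $a \in X_P$, the graph $H[X_P] - a$ is connected and moreover each of the three ``port'' vertices $x_P^u, x_P^v, x_P^w$ lies on a cycle of $H[X_P] - a$ (when $a$ is not that port) or is reachable through two internally disjoint paths to the other ports. This is the routine calculation hidden in the statement, and I would verify it by inspecting the fixed $11$-vertex gadget directly (Figure~\ref{ajouter!}), using the fact that $x_P^u$ has degree $4$ inside the gadget, $x_P^v$ and $x_P^w$ have degree $2$ inside plus their external edge $e_v$, etc.

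Next I would build the global backbone. Note that $H$ is obtained from $G$ (a graph in $\mathcal{G}$, hence $2$-connected) by replacing each $\mathcal{P}_1$-path by a vertex $x_P$, each $\mathcal{P}_2$-path by a gadget, adding the hub $y$ joined to all $x_P$ with $P \in \mathcal{P}_1$, and adding one edge $e_v$ per original vertex $v$. The key observation is that contracting each gadget $X_P$ to a single vertex and contracting $y$ together with all its incident $x_P$'s appropriately yields (a subdivision/minor of) $G$ itself, so the cyclic structure of $G$ is inherited. Because $G$ is $2$-connected, for any vertex $a$ of $H$ the ``skeleton'' of $H$ minus the part of $a$ remains $2$-edge-connected: removing a single $x_P$ corresponds to removing at most one vertex of $G$, which leaves $G$ connected and bridgeless on the relevant part; the hub $y$ has many neighbors so $H - y$ is clearly still $2$-edge-connected (each $x_P$, $P \in \mathcal{P}_1$, retains its two $e_v$-edges into the rest); and removing an internal gadget vertex only affects one gadget, which by the first step stays connected with its ports still doubly attached.

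I would then do the case analysis by the type of $a$: (i) $a = y$; (ii) $a = x_P$ for $P \in \mathcal{P}_1$; (iii) $a \in X_P$ a port vertex $x_P^v$; (iv) $a \in X_P$ an internal vertex $x_P^i$. In each case I want to produce, for every pair of remaining vertices, two edge-disjoint paths — equivalently, show $H - a$ has no bridge and is connected. The cleanest way is: show $H - a$ is connected, then show every edge of $H - a$ lies on a cycle. For edges inside a gadget not containing $a$, this is immediate from $2$-edge-connectivity of the full gadget plus its external edges. For edges inside the gadget containing $a$, use step one. For the edges $e_v$, $y x_P$, use the $2$-connectivity of $G$ to route a cycle through the skeleton, detouring through each gadget via two of its ports (possible by step one, since a gadget visited en route never contains $a$, or if it does, the relevant two ports are still connected in the gadget minus $a$).

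The main obstacle, and the only genuinely non-trivial point, is step one: verifying that the fixed $\mathcal{P}_2$-gadget is robust enough — specifically that for \emph{every} choice of deleted vertex $a \in X_P$, the two or three ports remain pairwise connected by edge-disjoint paths inside $H[X_P] - a$ (augmented with the external $e_v$-edges when $a$ is a port). Since the gadget was evidently engineered precisely so that this holds, I expect this to reduce to a short finite check over the $11$ choices of $a$, which I would present compactly (e.g., by listing, for each $a$, an explicit ear decomposition of $H[X_P] - a$ or an explicit pair of port-to-port paths). Everything else follows formally from $2$-connectivity of $G$ and the high degree of $y$.
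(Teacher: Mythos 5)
Your local step (the finite check that each $\mathcal{P}_2$-gadget stays connected, with its ports well attached, after deleting any one of its vertices) is fine and corresponds to what the paper dispatches at the end of its first case by contracting everything outside one gadget. The genuine gap is in your global step. The ``key observation'' that contracting the gadgets (and somehow absorbing $y$) yields a subdivision or minor of $G$, ``so the cyclic structure of $G$ is inherited,'' is not a valid inference and is never substantiated: 2-edge-connectivity is not preserved under taking minors, and the contracted skeleton is the intersection graph of the path decomposition $\mathcal{P}$ together with the hub $y$, which is not a subdivision of $G$ on the face of it. (It is in fact homeomorphic to the cubic graph from which $G$ arises, but that requires proving a structural statement about legal decompositions in the spirit of Proposition \ref{trivial}, which you do not do.) Worse, the concrete justifications you give for the hard cases are incorrect. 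Deleting $x_P$ for $P\in\mathcal{P}_1$ does not correspond to deleting a vertex of $G$ but to deleting (the representative of) an \emph{edge} of $G$, and a 2-connected graph minus an edge -- equivalently, the skeleton minus a degree-2 vertex -- can very well contain bridges; this case only works because of the hub edges $yx_{P'}$, which your argument never uses. Similarly, for $a=y$ the assertion that $H-y$ is ``clearly'' 2-edge-connected because each $x_P$ keeps its two edges $e_u,e_v$ proves nothing: minimum degree 2 does not exclude bridges, and this is exactly the point where the paper has to work, showing that a cut of size at most 1 in $H-y$ would force $V^S\cap V^{-S}$ to be a single vertex separating $G$, contradicting 2-vertex-connectivity, each shared vertex $w$ contributing a distinct crossing edge $e_w$.

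Relatedly, you locate ``the only genuinely non-trivial point'' in the 11-vertex gadget check, but that is the routine part; the substance of Lemma \ref{h-v} (and of the companion Lemma \ref{cuts}) is precisely the global cut analysis you wave away -- either via the paper's $V^S/V^{-S}$ separator argument, or via a proved (not asserted) homeomorphism between the contracted skeleton and the underlying cubic graph plus an explicit use of the hub $y$ when a $\mathcal{P}_1$-vertex is deleted. Until one of those is supplied, the proof does not go through.
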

\begin{proof}
Suppose otherwise, so there is a vertex $a \in V(H)$ and a set $S \subseteq V(H)-a$ such that $d_{H-a}(S)\leq 1$.

First suppose that $a \neq y$. By symmetry, we may suppose that $y \in S$. First consider some $P_0 \in \mathcal{P}_1$ with $x_{P_0}\neq a$. If there is some $P_1 \in \mathcal{P}_1$ with $V(P_0)\cap V(P_1)\neq\emptyset$ and $x_{P_1}\neq a$, then $H-a$ contains the $x_{P_0}y$-paths $x_{P_0}y$ and $x_{P_0}x_{P_1}y$, so $x_{P_0}\in S$. Otherwise, there is some $P_2 \in \mathcal{P}_2$ with $V(P_0)\cap V(P_2)\neq\emptyset$ and $a \notin X_{P_2}$. Further, as $\mathcal{P}$ is legal, there is some $P_3 \in \mathcal{P}_1$ with $V(P_2)\cap V(P_3)\neq\emptyset$ and $x_{P_3}\neq a$. Hence $H-a$ contains the $x_{P_0}y$-path $x_{P_0}y$ and an $x_{P_0}y$-path passing through $X_{P_2}$ and $x_{P_3}$. This yields $x_{P_0}\in S$. We obtain $\bigcup_{P \in \mathcal{P}_1}x_P-a \subseteq S$.

Now consider some $P_0 \in \mathcal{P}_2$ with $a \notin X_{P_0}$. Observe that $H[X_{P_0}]$ is 2-edge-connected, hence we either have $X_{P_0}\subseteq S$ or $X_{P_0}\cap S= \emptyset$. As $\mathcal{P}$ is legal, there are paths $P_1,P_2 \in \mathcal{P}_1$ with $V(P_0)\cap V(P_1)\neq\emptyset, V(P_0)\cap V(P_2)\neq\emptyset$ and $a\notin \{x_{P_1},x_{P_2}\}$. Hence $H-a$ contains an edge from $X_{P_0}$ to $x_{P_i}$ for $i=1,2$, so $X_{P_0} \subseteq S$.

We obtain that $V(H)-S \subseteq X_P$ for some $P \in \mathcal{P}_2$ with $a \in X_P$. Let $H'$ be the graph that is obtained from $H$ by contracting $V(H)-X_{P}$ into a single vertex. It is easy to see that $H'-z$ is 2-edge-connected for all $z \in V(H')$. This yields $d_{H-a}(S)=d_{H'-a}(S)\geq 2$, a contradiction.

   Now suppose that $a=y$. Observe that for every $P \in \mathcal{P}_2$, we have that $H[X_{P}]$ is 2-edge-connected, hence we either have $X_{P}\subseteq S$ or $X_{P}\cap S= \emptyset$. Let $\mathcal{P}^S=\{P \in \mathcal{P}_1:x_P \in S\} \cup \{P \in \mathcal{P}_2:X_P \subseteq S\}$ and $\mathcal{P}^{-S}=\mathcal{P}-\mathcal{P}^S$. Further, let $V^S=\bigcup_{P \in \mathcal{P}^S}V(P)$ and $V^{-S}=\bigcup_{P \in \mathcal{P}^{-S}}V(P)$. Observe that $V^S\cup V^{-S}=V(G)$ and since $S,V(H)-S-y\neq \emptyset$ and by the construction, we have $\min\{|V^S|,| V^{-S}|\} \geq 2$. If one of $V^S-V^{-S}$ and $V^{-S}-V^S$ is empty, we have $|V^S \cap V^{-S}|=\min\{|V^S|,| V^{-S}|\} \geq 2$. Otherwise, observe that $G$ does not contain an edge linking $V^S-V^{-S}$ and $V^{-S}-V^S$, so $V^S \cap V^{-S}$ is a separator of $G$. As $G$ is 2-vertex-connected, again, we obtain $|V^S \cap V^{-S}| \geq 2$. Finally, observe that for every $w \in V^S \cap V^{-S}$, we have $e_w \in \delta_{H-y}(S)$. This yields $d_{H-y}(S)\geq |V^S \cap V^{-S}| \geq 2$, a contradiction. This finishes the proof.
\end{proof}

The following lemma is the key for the reduction in Section \ref{demo}. Its proof has some similarities with the one of Lemma \ref{h-v}, but we give it separately for the sake of readability.

\begin{lemma}\label{cuts}
$H$ is 3-edge-connected and the 3-edge-cuts of $H$ are the following:
\begin{enumerate}[(i)]
    \item $\delta_H(x_P)$ for every $P \in \mathcal{P}_1$,
    \item $\delta_H(X_P)$ for every $P \in \mathcal{P}_2$,
    \item $\delta_H(x_P^{i})$ for every $P \in \mathcal{P}_2$ and $i \in \{v,w,1\ldots,7\}$,
    \item $\delta_H(\{x_P^{u},x_P^2,\ldots,x_P^5\})$ for every $P \in \mathcal{P}_2$.
\end{enumerate}
\end{lemma}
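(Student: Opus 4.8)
The plan is to first establish that $H$ is $3$-edge-connected, and then to analyze all cuts of size exactly $3$. For the lower bound on edge-connectivity, I would argue as in the proof of Lemma~\ref{h-v}: take a nonempty proper $S\subseteq V(H)$ with $d_H(S)\le 3$ and show it must be one of the listed sets. Since $H[X_P]$ is $3$-edge-connected for every $P\in\mathcal P_2$ and in fact every $x_P^i$ has degree~$3$ in $H$ with its whole neighborhood inside $X_P$ except for the single vertex $x_P^v$ (which has the one external edge $e_v$), a cut of size $\le 3$ either separates $X_P$ from the rest cleanly, cuts off a single low-degree vertex inside a gadget, or respects all gadgets. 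So I would split into two regimes exactly as in Lemma~\ref{h-v}: either $S$ is contained in (or avoids) every $X_P$, or $S$ ``splits'' some gadget.

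For the case where $S$ splits some $X_P$ with $P=uvw\in\mathcal P_2$: since $H[X_P]$ is $3$-edge-connected on $11$ vertices and $d_H(S)\le 3$, the set $S\cap X_P$ (or its complement in $X_P$) must be a side of a $3$-edge-cut of $H[X_P]$, and moreover that side can have at most the external edge $e_v$ leaving through $x_P^v$ and edges $e_u,e_w$ through $x_P^u,x_P^w$. A direct inspection of the gadget in Figure~\ref{ajouter!} — which I would do once, carefully — shows that the only $3$-edge-cuts of $H[X_P]$ compatible with the global degree bound are $\{x_P^i\}$ for $i\in\{v,w,1,\dots,7\}$ and $\{x_P^u,x_P^2,\dots,x_P^5\}$ (and their complements within $X_P$), giving cases (iii) and (iv); here one checks e.g. that $\delta_H(x_P^u)$ has size $5$, that $\delta_H(x_P^8)$ has size $4$, and that any larger ``interior'' subset of the gadget already has $\ge 4$ edges leaving it. The fact that $e_u,e_w$ also leave any such set but are counted elsewhere means these are genuinely cuts of $H$ of size exactly~$3$ only when the extra external edges are zero, i.e. the set is on the $v$-side; I must be careful to phrase this so that (iii)/(iv) list $\delta_H$ of the \emph{vertex/vertex-set}, with the understanding that $d_H$ of that set is $3$.

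For the case where $S$ respects all gadgets (so $X_P\subseteq S$ or $X_P\cap S=\emptyset$ for each $P\in\mathcal P_2$): set $\mathcal P^S=\{P\in\mathcal P_1:x_P\in S\}\cup\{P\in\mathcal P_2:X_P\subseteq S\}$ and transport $S$ to $V(G)$ via $V^S=\bigcup_{P\in\mathcal P^S}V(P)$, exactly as in Lemma~\ref{h-v}. If $y\notin S$, then the edges of $\delta_H(S)$ are: the edges from $y$ to $\{x_P:P\in\mathcal P^S\cap\mathcal P_1\}$, plus the edges $e_w$ for $w\in V^S\cap V^{-S}$. Using $2$-vertex-connectivity and cubicity of the graph underlying $\mathcal G$, $|V^S\cap V^{-S}|\ge 2$ unless $V^S\subseteq V^{-S}$ or vice versa; combining this with the $y$-edges forces $d_H(S)\ge 3$ with equality only when $\mathcal P^S=\{P\}$ for a single $P\in\mathcal P_1$ (giving case (i)) or $\mathcal P^S=\{P\}$ for a single $P\in\mathcal P_2$ sharing exactly two vertices of $G$ with its neighbors, and where the third boundary... — this is where one checks that for $P\in\mathcal P_2$ all three $e_v,e_u,e_w$ cross, giving $d_H(X_P)=3$ and case (ii). If instead $y\in S$, replace $S$ by its complement. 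The main obstacle is the bookkeeping in this last paragraph: correctly counting $d_H(S)$ as (number of active $\mathcal P_1$-leaves adjacent to $y$) plus $|V^S\cap V^{-S}|$, and checking that the only way to reach total $3$ is the two singleton scenarios — this needs the structural input that $G$ is obtained by subdividing a cubic $2$-connected graph twice (so a single original path $P$ meets the rest of $G$ in exactly its $\le 2$ endpoints, each of degree contributing one $e_v$, while a $\mathcal P_2$-path meets it in exactly $3$ such vertices). Everything else is routine verification on the fixed gadget.
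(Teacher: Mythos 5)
Your overall two-case skeleton (a cut that splits some gadget $X_{P}$ versus a cut that respects every gadget) is the same as the paper's, and your treatment of the second case — counting $d_H(S)\ge |\{P\in\mathcal P_1 : x_P\in S\}| + |V^S\cap V^{-S}|$ and using legality of $\mathcal P$ together with $2$-connectivity of $G$ to force $|V^S\cap V^{-S}|\ge 2$ — matches the paper's argument in outline. The genuine gap is in the gadget-splitting case. Your key premise there is false: $H[X_P]$ is \emph{not} $3$-edge-connected, since $x_P^{v}$ and $x_P^{w}$ have degree $2$ inside the gadget (their third edges, $e_v$ and $e_w$, leave $X_P$); it is only $2$-edge-connected. (Relatedly, your spot-checks are off: $x_P^{w}$ and $x_P^{u}$ also carry external edges, and $d_H(x_P^8)=5$, not $4$.) Because a split of one gadget is only guaranteed to contribute $2$ edges to $\delta_H(S)$, a budget of one edge remains, and your proposal contains no argument ruling out cuts that split a gadget \emph{and} contain vertices outside it — for instance $S$ consisting of part of $X_{P^*}$ together with a single $x_{P_1}$, $P_1\in\mathcal P_1$, which a priori costs only $2+1=3$ edges. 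Excluding such sets is exactly where the paper invests its work (the claims showing that no $x_{P}$ with $P\in\mathcal P_1$ and no other full gadget can lie in $S$): one must use that $G\in\mathcal G$ and that $\mathcal P$ is legal to exhibit a fourth crossing edge (an $e_v$ for a shared vertex, or an edge to a second $\mathcal P_1$-vertex) in every such configuration. Your trichotomy ``separates $X_P$ cleanly, cuts off a low-degree vertex inside a gadget, or respects all gadgets'' is precisely what has to be proved, and the justification you offer for it does not stand once the $3$-edge-connectivity claim is corrected.

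Once that structural step is supplied, the remaining final inspection of the gadget (that the only $S\subsetneq X_P$ with $d_H(S)=3$ are the singletons $x_P^{i}$, $i\in\{v,w,1,\ldots,7\}$, and $\{x_P^{u},x_P^2,\ldots,x_P^5\}$) is indeed routine, and the paper also leaves it at that level of detail; but note that these cuts do use the external edges ($e_v$ for $\delta_H(x_P^v)$, $e_u$ for case (iv)), so your remark that equality holds ``only when the extra external edges are zero'' should be dropped rather than made precise.
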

\begin{proof}
It is easy to see that all the given cuts are 3-edge-cuts indeed. It hence suffices to prove that all cuts of size at most 3 are 3-edge-cuts of this form. Let $\emptyset \neq S \subsetneq V(H)$ with $d_H(S)\leq 3$. By symmetry, we may suppose that $y \in V(H)-S$. Observe that for every $P \in \mathcal{P}_2$, we have that $H[X_P]$ is 2-edge-connected. If there are distinct $P_1,P_2 \in \mathcal{P}_2$ such that $S \cap X_{P_i}$ and $S-X_{P_i}$ are nonempty for $i=1,2$, we obtain $d_H(S)\geq \sum_{i=1}^2d_{H[X_{P_i}]}(S \cap X_{P_i})\geq 2+2=4$, a contradiction. Hence there is at most one such path. For the rest of the proof, we distinguish the two cases whether this path exists or not.

\begin{case}
There exists a path $P^*\in \mathcal{P}_2$ such that $S \cap X_{P^*}$ and $S-X_{P^*}$ are nonempty.
\end{case}

\begin{claim}\label{zuerst}
For all $P \in \mathcal{P}_1$, we have $x_P \in V(H)-S$.
\end{claim}
\begin{proof}
Suppose otherwise. If there are two paths $P_1,P_2 \in \mathcal{P}_1$ with $x_{P_i}\in S$ for $i=1,2$, we obtain $d_H(S)\geq d_{H[X_{P^*}]}(S \cap X_{P^*})+\sum_{i=1}^2d_{H}(x_{P_i},y)\geq 2+1+1=4$, a contradiction. We may hence suppose that there is a unique path $P_1 \in \mathcal{P}_1$ with $x_{P_1}\in S$. As $G \in \mathcal{G}$ and $\mathcal{P}$ is legal, there is a vertex $v\in V(G)$ and a path $P_2 \in \mathcal{P}-\{P^*,P_1\}$ such that $v \in V(P_1 )\cap V(P_2)$. If the endvertex of $e_v$ which is distinct from $x_{P_1}$ is in $V(H)-S$, we obtain $d_H(S)\geq d_{H[X_{P^*}]}(S \cap X_{P^*})+d_{H}(x_{P_1},y)+|\{e_v\}|=2+1+1=4$, a contradiction. We obtain that this vertex is in $S$. By the choice of $P_1$, we obtain that $P_2 \in \mathcal{P}_2$. Further, by the choice of $P^*$, we obtain that $X_{P_2}\subseteq  S$. As $G \in \mathcal{G}$ and $\mathcal{P}$ is legal, we obtain that there is a path $P_3\in \mathcal{P}_1-P_1$ such that $V(P_2)\cap V(P_3)$ contains a vertex $w$. By the choice of $P_1$, we obtain $x_{P_3}\in V(H)-S$. This yields $d_H(S)\geq d_{H[X_{P^*}]}(S \cap X_{P^*})+d_{H}(x_{P_1},y)+d_H(x_{P_2}^w,x_{P_3})=2+1+1=4$, a contradiction.
\end{proof}

\begin{claim}\label{dann}
For all $P \in \mathcal{P}_2-P^*$, we have $X_P \subseteq V(H)-S$.
\end{claim}
\begin{proof}
Suppose otherwise, so by the choice of $P^*$, there is some $P_1 \in \mathcal{P}_2-P^*$ with $X_{P_1}\subseteq S$. As $G \in \mathcal{G}$ and $\mathcal{P}$ is legal, there are distinct paths $P_2,P_3,P_4 \in \mathcal{P}_1$ and $v_2,v_3,v_4 \in V(G)$ such that $v_i \in V(P_i)\cap V(P_1)$. By Claim \ref{zuerst}, we have $\{x_{P_2},x_{P_3},x_{P_4}\}\subseteq V(H)-S$. This yields that $d_H(S)\geq d_{H[X_{P^*}]}(S \cap X_{P^*})+\sum_{i=2}^4d_{H}(x_{P_1}^{v_i},x_{P_i})\geq 2+1+1+1=5$, a contradiction.
\end{proof}
By Claims \ref{zuerst} and \ref{dann}, we obtain that $S \subsetneq X_{P^*}$. By construction, we obtain that one of the cases $(iii)$ and $(iv)$ of Lemma \ref{cuts} occurs. This finishes the case.

\begin{case}
For all $P \in \mathcal{P}_2$, we have either $X_P\subseteq S$ or $X_P \cap S = \emptyset$.
\end{case}
\end{proof}

Let $\mathcal{P}_1^S$ be the sets of paths $P \in \mathcal{P}_1$ for which $x_P \in S$ holds and let $\mathcal{P}_2^S$ be the sets of paths $P \in \mathcal{P}_2$ for which $X_P \subseteq S$ holds. Next, let $\mathcal{P}_1^{-S}=\mathcal{P}_1-\mathcal{P}_1^S$ and $\mathcal{P}_2^{-S}=\mathcal{P}_2-\mathcal{P}_2^S$. Finally, let $V^S=\bigcup_{P \in \mathcal{P}_1^S \cup \mathcal{P}_2^S}V(P)$ and $V^{-S}=\bigcup_{P \in \mathcal{P}_1^{-S} \cup \mathcal{P}_2^{-S}}V(P)$. Observe that $V^S,V^{-S}\subseteq V(G)$.

\begin{claim}\label{cutgross}
$d_H(S)\geq |\mathcal{P}_1^S|+|V^S \cap V^{-S}|$.
\end{claim}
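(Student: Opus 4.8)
I need to bound $d_H(S)$ from below, where we are in Case 2: for every $P\in\mathcal{P}_2$ either $X_P\subseteq S$ or $X_P\cap S=\emptyset$, and $y\notin S$. The plan is to account separately for two disjoint types of edges crossing the cut $\delta_H(S)$: first, the edges joining $y$ to the $P_1$-vertices that lie in $S$, and second, the edges $e_v$ of $H$ that correspond to vertices $v$ of $G$ lying in $V^S\cap V^{-S}$.

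**The $y$-edges.** For each $P\in\mathcal{P}_1^S$, the vertex $x_P\in S$ is adjacent to $y$, and $y\notin S$; these edges $x_P y$ are pairwise distinct for distinct $P\in\mathcal{P}_1^S$ (each $x_P$ has exactly one edge to $y$). Hence $\delta_H(S)$ contains at least $|\mathcal{P}_1^S|$ such edges, and none of them is an $e_v$-type edge (the $e_v$ edges join $x_P$'s and $x_P^v$'s, never touch $y$), so these two families are disjoint.

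**The $e_v$-edges.** Now take any $v\in V^S\cap V^{-S}$. By the definitions of $V^S$ and $V^{-S}$, there is a path $P\in\mathcal{P}_1^S\cup\mathcal{P}_2^S$ with $v\in V(P)$ and a path $P'\in\mathcal{P}_1^{-S}\cup\mathcal{P}_2^{-S}$ with $v\in V(P')$; since $\mathcal{P}$ is a legal path decomposition every vertex of $G$ lies in exactly two paths of $\mathcal{P}$, so $P\neq P'$ and these are precisely the two paths through $v$. The single edge $e_v$ of $H$ associated to $v$ joins the representatives of these two paths: writing $a_P$ for $x_P$ if $P\in\mathcal{P}_1$ and for $x_P^v$ if $P\in\mathcal{P}_2$, we have $e_v=a_P a_{P'}$. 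Now $a_P\in S$: either $P\in\mathcal{P}_1^S$, so $x_P\in S$ by definition, or $P\in\mathcal{P}_2^S$, so $X_P\subseteq S$ and in particular $x_P^v\in S$. Symmetrically $a_{P'}\in V(H)-S$, using that $P'\in\mathcal{P}_1^{-S}$ gives $x_{P'}\notin S$ and $P'\in\mathcal{P}_2^{-S}$ gives (by the Case 2 dichotomy) $X_{P'}\cap S=\emptyset$. Hence $e_v\in\delta_H(S)$. Since the edges $e_v$ are pairwise distinct for distinct $v\in V(G)$ (the construction added exactly one edge $e_v$ per vertex $v$), this contributes at least $|V^S\cap V^{-S}|$ further edges to $\delta_H(S)$, all distinct from the $y$-edges counted above.

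**Conclusion.** Adding the two disjoint contributions gives $d_H(S)\ge |\mathcal{P}_1^S|+|V^S\cap V^{-S}|$, which is exactly Claim~\ref{cutgross}. The one point that needs a little care — and which I regard as the main (minor) obstacle — is checking that the two families of edges are genuinely disjoint and that within the $e_v$-family the edges are distinct; both follow directly from the construction of $H$ (each $x_P$ has a unique edge to $y$, the $e_v$ edges avoid $y$, and exactly one $e_v$ is introduced per $v\in V(G)$), together with the legality of $\mathcal{P}$, which guarantees each $v\in V(G)$ lies in exactly two paths so that the endpoints $a_P,a_{P'}$ of $e_v$ are well defined and split across $S$.
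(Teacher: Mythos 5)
Your proof is correct and follows the same route as the paper's (one-line) argument: the edges $x_Py$ for $P\in\mathcal{P}_1^S$ and the edges $e_v$ for $v\in V^S\cap V^{-S}$ all lie in $\delta_H(S)$ and are pairwise distinct. You simply spell out the details the paper leaves implicit (why each $e_v$ crosses the cut, using the Case 2 dichotomy and the fact that each vertex of $G$ lies in exactly two paths of $\mathcal{P}$), so there is nothing to add.
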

\begin{proof}
For every $P \in \mathcal{P}_1^S$, the edge $x_Py$ is contained in $\delta_H(S)$ and for every $v \in V^S \cap V^{-S}$, the edge $e_v$ is contained in $\delta_H(S)$. As all of these edges are distinct, the statement follows.
\end{proof}

We now distinguish several cases depending on the size of $\mathcal{P}_1^S$. 

\begin{subcase}
$\mathcal{P}_1^S=\emptyset$.
\end{subcase}

 As $G \in \mathcal{G}$, $\mathcal{P}$ is legal and $\mathcal{P}_1^S=\emptyset$, we obtain $\bigcup_{P \in \mathcal{P}_2^S}V(P) \subseteq V^S \cap V^{-S}$. If $\mathcal{P}_2^S$ contains at least two paths $P_1,P_2$, we obtain $|V^S \cap V^{-S}|\geq |V(P_1)|+\|V(P_2)| \geq 6$, a contradiction to Claim \ref{cutgross}. If $|\mathcal{P}_2^S|=1$, by construction, case $(ii)$ of Lemma \ref{cuts} occurs. If $|\mathcal{P}_2^S|=0$, we obtain $S=\emptyset$, a contradiction.

\begin{subcase}
$\mathcal{P}_1^S\neq\emptyset$.
\end{subcase}
By Proposition \ref{trivial} and $|V(G)|\geq 5$, we have $|\mathcal{P}_1|\geq 4$. Hence if $\mathcal{P}_1^{-S}=\emptyset$, we obtain by Proposition \ref{cutgross} that $d_H(S)\geq |\mathcal{P}_1^S|=|\mathcal{P}_1|-|\mathcal{P}_1^{-S}|\geq 4-0=4$, a contradiction, so $\mathcal{P}_1^{-S}\neq \emptyset$. As every path in $\mathcal{P}_1^{-S}$ contains two vertices, we obtain $| V^{-S}|\geq 2$. Similarly, the fact that $\mathcal{P}_1^S\neq\emptyset$ yields $| V^{S}|\geq 2$. If one of $V^S-V^{-S}$ and $V^{-S}-V^S$ is empty, we have $|V^S \cap V^{-S}|=\min\{|V^S|,| V^{-S}|\} \geq 2$.
Observe that every edge of $E(G)$ incident to a vertex in $V^S-V^{-S}$ is contained in a path of $\mathcal{P}^S$ and every edge of $E(G)$ incident to a vertex in $V^{-S}-V^{S}$ is contained in a path of $\mathcal{P}^{-S}$.
Hence, $G$ does not contain an edge linking $V^S-V^{-S}$ and $V^{-S}-V^S$, so $V^S \cap V^{-S}$ is a separator of $G$. As $G$ is 2-vertex-connected, again, we obtain $|V^S \cap V^{-S}| \geq 2$.
By Claim \ref{cutgross}, we obtain $3\geq d_H(S)\geq |\mathcal{P}_1^S|+|V^S \cap V^{-S}|\geq 1+2=3$. Hence equality holds throughout yielding that $\mathcal{P}_1^S$ contains a single path $P$ and $|V^S \cap V^{-S}|=2$. As $G \in \mathcal{G}$ and $\mathcal{P}$ is legal, we obtain $S=\{x_P\}$, so case $(i)$ of Lemma \ref{cuts} occurs.
\subsubsection{The main proof}\label{demo}
In this section, we show that our reduction works indeed. More formally, we prove the following statement.
\begin{lemma}
$(H,k+|V(G)|)$ is a positive instance of 34EDA if and only if $(G,k)$ is a positive instance of VC.
\end{lemma}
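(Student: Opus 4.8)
The plan is to reduce $4$EDA on $H$ to a covering problem for the $3$-edge-cuts of $H$ and to match that with vertex covers of $G$. Since $H$ is $3$-edge-connected by Lemma~\ref{cuts}, doubling an edge set $F$ yields a graph in which each cut has size $d_H(X)+|F\cap\delta_H(X)|$, so it is $4$-edge-connected if and only if $F$ meets every $3$-edge-cut of $H$; by Lemma~\ref{cuts} these cuts are exactly (i)--(iv). For $P=uvw\in\mathcal{P}_2$ let $E_P$ denote the edge set of the gadget $X_P$ (edges with both ends in $X_P$) and call $e_u,e_v,e_w$ its \emph{external} edges. As each vertex of $G$ lies in at most one path of $\mathcal{P}_2$ (Proposition~\ref{trivial}), the sets $E_P$ are pairwise disjoint and disjoint from $\{e_v:v\in V(G)\}$, while every edge of $H$ is of the form $x_Qy$ with $Q\in\mathcal{P}_1$, or $e_v$, or an element of some $E_P$. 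Also $|\mathcal{P}_2|=\tfrac14|V(G)|$ by Proposition~\ref{trivial}, so $4|\mathcal{P}_2|=|V(G)|$.

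For the direction ``VC $\Rightarrow$ $4$EDA'', take a vertex cover $S$ of $G$ with $|S|\le k$ and let $F=\{e_v:v\in S\}\cup\bigcup_{P\in\mathcal{P}_2}F_P$, where $F_P\subseteq E_P$ with $|F_P|=4$ is chosen as follows: if $v\in S$ put $F_P=\{x_P^1x_P^2,x_P^3x_P^4,x_P^5x_P^6,x_P^wx_P^7\}$; if $v\notin S$ then $u,w\in S$ since $S$ covers the edges $uv,vw$ of $G$, and we put $F_P=\{x_P^vx_P^1,x_P^2x_P^3,x_P^4x_P^5,x_P^6x_P^7\}$. A direct check against the list in Lemma~\ref{cuts} shows that in both cases the edges of $F$ meeting $X_P$ hit all cuts of types (ii), (iii), (iv) of $X_P$, and that, $S$ being a vertex cover, every type-(i) cut $\delta_H(x_Q)=\{x_Qy,e_a,e_b\}$ is hit by $e_a$ or $e_b$. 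Hence $F$ meets every $3$-edge-cut and $|F|=|S|+4|\mathcal{P}_2|\le k+|V(G)|$, so $(H,k+|V(G)|)$ is a positive instance.

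For the converse, let $F$ meet every $3$-edge-cut of $H$ with $|F|\le k+|V(G)|$. Since $x_Qy$ lies in no $3$-edge-cut besides $\delta_H(x_Q)$, I may exchange each $x_Qy\in F$ for one of the two edges $e_a,e_b$ at the ends of $Q$ without enlarging $F$; afterwards $F\subseteq\bigcup_PE_P\cup\{e_v:v\in V(G)\}$. Now I establish two facts about a fixed gadget $X_P$ with $P=uvw$. (a) $|F\cap E_P|\ge 4$: the seven cuts $\delta_H(x_P^1),\dots,\delta_H(x_P^7)$ are contained in $E_P$, and each edge of $E_P$ lies in at most two of them, so $|F\cap E_P|\ge\lceil 7/2\rceil=4$. (b) If $e_v\notin F$ and $\{e_u,e_w\}\not\subseteq F$, then $|F\cap E_P|\ge 5$: since $\delta_H(X_P)=\{e_u,e_v,e_w\}$ is hit and $e_v\notin F$, one of $e_u,e_w$ is absent from $F$, so $\delta_H(x_P^v)$ together with one of $\delta_H(x_P^w)$ (if $e_w\notin F$) and $\delta_H(\{x_P^u,x_P^2,\dots,x_P^5\})$ (if $e_u\notin F$) must also be covered within $E_P$, giving nine cuts of $X_P$ that $F\cap E_P$ must cover; using that $\delta_H(x_P^v)$ is hit only by $x_P^vx_P^1$ or $x_P^vx_P^8$ and $\delta_H(\{x_P^u,x_P^2,\dots,x_P^5\})$ only by $x_P^1x_P^2$ or $x_P^5x_P^6$, a short case analysis (the nine cuts behave like the vertices of a path on nine vertices) shows at least five edges of $E_P$ are needed.

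To conclude, set $S=\{v\in V(G):e_v\in F\}$ and let $B=\{P=uvw\in\mathcal{P}_2:v\notin S\text{ and }\{u,w\}\not\subseteq S\}$; put $S^*=S\cup\{v:uvw\in B\}$. The middle vertices of distinct paths in $\mathcal{P}_2$ are distinct and, for $P\in B$, not in $S$, so $|S^*|=|S|+|B|$. Checking the edges of $G$ shows $S^*$ is a vertex cover: an edge inside a $\mathcal{P}_1$-path $Q$ is covered because $\delta_H(x_Q)$ is hit and, after the normalization, not by $x_Qy$; an edge inside a $\mathcal{P}_2$-path $uvw$ is covered by $v\in S^*$ when $uvw\in B$, and by $S$ otherwise. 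For $P\in B$ fact (b) applies, so $|F\cap E_P|\ge 5$; combining this with fact (a) for the remaining $P$ and with $|F|=|S|+\sum_{P\in\mathcal{P}_2}|F\cap E_P|$ gives $|S|\le|F|-4|\mathcal{P}_2|-|B|$, whence $|S^*|=|S|+|B|\le|F|-4|\mathcal{P}_2|=|F|-|V(G)|\le k$. Therefore $(G,k)$ is a positive instance of VC. I expect fact (b) to be the main obstacle: it is precisely where the detailed wiring of the gadget $X_P$ is used, ruling out that four internal edges could cover everything once the ``middle'' external edge $e_v$ (and one of $e_u,e_w$) is unavailable; all remaining steps are bookkeeping against Lemma~\ref{cuts}.
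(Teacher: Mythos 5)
Your proposal is correct in substance and follows the same overall strategy as the paper: the forward direction (your set $F$ built from $e_v$, $v\in S$, together with the two alternating $4$-edge sets inside each gadget) is identical to the paper's, and the converse rests on the same two quantitative facts, namely $|F\cap E_P|\ge 4$ always and $|F\cap E_P|\ge 5$ when neither $e_v$ nor both of $e_u,e_w$ are picked (your fact (b), the paper's Subclaim \ref{5}). Where you genuinely deviate is in how the converse extracts a vertex cover: the paper first normalizes $F$ by an exchange/minimality argument so that every path of $\mathcal{P}$ becomes ``nice'' (Claims \ref{1nice} and \ref{2nice}) and then reads $S=\{v:e_v\in F\}$ directly, whereas you keep $F$ as is (after the harmless swap of spoke edges $x_Qy$, which is the same move as Claim \ref{1nice} done without minimality), read off $S$, and repair each bad $\mathcal{P}_2$-gadget by adding its middle vertex to the cover, charging that addition to the forced fifth edge inside the gadget. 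This charging variant is slightly more direct and avoids re-verifying feasibility after an exchange; the paper's exchange variant has the advantage that the final $F$ itself witnesses niceness, which it reuses for the APX-hardness remark.

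The one place where your argument is not complete is the justification of fact (b). In the subcase $e_u\in F$, $e_w\notin F$ your counting is fine: the nine vertices $x_P^v,x_P^1,\dots,x_P^7,x_P^w$ each need an incident edge of $F\cap E_P$ and every edge serves at most two of them, giving $\lceil 9/2\rceil=5$. But in the subcase $e_w\in F$, $e_u\notin F$ the nine relevant cuts are the eight singleton cuts at $x_P^v,x_P^1,\dots,x_P^7$ plus $\delta_H(\{x_P^u,x_P^2,\dots,x_P^5\})$, and these do \emph{not} behave like the vertices of a path on nine vertices: the edges $x_P^1x_P^2$ and $x_P^5x_P^6$ each lie in three of the nine cuts, so the $\lceil 9/2\rceil$ bound does not apply and ``a short case analysis'' is really needed. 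The clean way to close it (which is what the paper does) is to observe that a $4$-edge subset of $E_P$ touching all of $x_P^v,x_P^1,\dots,x_P^7$ must be a perfect matching of the path these eight vertices induce, hence must equal $\{x_P^vx_P^1,x_P^2x_P^3,x_P^4x_P^5,x_P^6x_P^7\}$, and this set misses the cut $\delta_H(\{x_P^u,x_P^2,\dots,x_P^5\})=\{e_u,x_P^1x_P^2,x_P^5x_P^6\}$; so five edges are indeed forced. With that paragraph inserted, your proof is complete.
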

\begin{proof}
First suppose that $(G,k)$ is a positive instance of VC, so there is a vertex cover $S \subseteq V(G)$ with $|S|\leq k$. We now define a set $F \subseteq E(H)$. First, for all $v \in S$, we let $F$ contain the edge $e_v$. Further, for every $P=uvw \in \mathcal{P}_2$ with $v \in S$, we let $F$ contain the edges $x_P^{1}x_P^2,x_P^{3}x_P^4,x_P^{5}x_P^6$ and $x_P^{w}x_P^7$ and for every $P=uvw \in \mathcal{P}_2$ with $v \notin S$, we let $F$ contain the edges $x_P^{v}x_P^1,x_P^{2}x_P^3,x_P^{4}x_P^5$ and $x_P^{6}x_P^7$. This finishes the description of $F$. Observe that, by Proposition \ref{trivial}, we have $|F|=|S|+4|\mathcal{P}_2|\leq k+ |V(G)|$. In order to prove that the graph obtained from $H$ by doubling all edges in $F$ is 4-edge-connected, it suffices to prove that $F$ contains at least one edge of every 3-edge-cut of $H$ which are listed in Lemma \ref{cuts}.

First consider some $P=uv \in\mathcal{P}_1$. As $S$ is a vertex cover and $uv \in E(G)$, we obtain that $S$ contains one of $u$ and $v$. This yields that $F$ contains one of $e_u$ and $e_v$, hence at least one edge of $\delta_H(x_P)$.

Now consider some $P=uvw \in\mathcal{P}_2$. 

As $S$ is a vertex cover and $uv \in E(G)$, we obtain that $S$ contains one of $u$ and $v$. This yields that $F$ contains one of $e_u$ and $e_v$, hence at least one edge of $\delta_H(X_P)$.

Next observe that by construction $F$ contains an edge in $\delta_H(x_P^{i})$ for all $i \in \{1,\ldots,7\}$.

We now distinguish two cases.
\begin{case}
$v \in S$
\end{case}
By construction, we have $e_v \in F$, so $F$ contains an edge in $\delta_H(x_P^v)$. Next, by construction, we have $x_P^{w}x_P^7 \in F$, so $F$ contains an edge in $\delta_H(x_P^{w})$. Finally, $F$ contains the edge $x_P^{1}x_P^2$, so $F$ contains an edge in $\delta_H(\{x_P^{u},x_P^2,\ldots,x_P^5\})$.

\begin{case}
$v \notin S$
\end{case}
As $S$ is a vertex cover and $uv,vw \in E(G)$, we obtain that $u,w \in S$, so $e_u,e_w \in F$.
In particular, $F$ contains an edge in $\delta_H(x_P^w)$ and an edge in $\delta_H(\{x_P^{u},x_P^2,\ldots,x_P^5\})$. Finally, $F$ contains the edge $x_P^{v}x_P^1$, so $F$ contains an edge in $\delta_H(x_P^{v})$.
\medskip

Hence, by Lemma \ref{cuts}, we obtain that the graph obtained from $H$ by doubling the edges of $F$ is 4-edge-connected, so $(H,k+|V(G)|)$ is a positive instance of 34EDA.
\bigskip

Now suppose that $(H,k+|V(G)|)$ is a positive instance of 34EDA, so there is a set $F \subseteq E(H)$ with $|F|\leq k+|V(G)|$ such that the graph obtained from doubling every edge of $F$ is 4-edge-connected.

We say that a path $P=uv \in \mathcal{P}_1$ is {\bf nice} with respect to $F$ if $F$ contains at least one of $e_u$ and $e_v$ and that a path $P=uvw \in \mathcal{P}_2$ is {\it nice} with respect to $F$ if $F$ contains either $e_v$ or both $e_u$ and $e_w$ and $|F \cap E(H[X_P])|\geq 4$. We may suppose that $F$ is chosen among all feasible solutions to the instance of 34EDA so that the number of paths in $\mathcal{P}$ which are not nice with respect to $F$ is minimized.

\begin{claim}\label{1nice}
All paths in $\mathcal{P}_1$ are nice with respect to $F$.
\end{claim}
\begin{proof}
Suppose otherwise, so there is some $P=uv \in \mathcal{P}_1$ such that $P$ is not nice with respect to $F$. As $F$ contains an edge in $\delta_H(x_P)$, we obtain that $x_Py \in F$. Let $F'=F-x_Py\cup e_u$. Clearly, we have $|F'|=|F|\leq k+|V(G)|$. Further, it follows from Lemma \ref{cuts} that the graph obtained from $H$ by doubling all edges of $F'$ is 4-edge-connected. This contradicts the choice of $F$.
\end{proof}

\begin{claim}\label{2nice}
All paths in $\mathcal{P}_2$ are nice with respect to $F$.
\end{claim}
\begin{proof}
Let $P=uvw\in \mathcal{P}_2$.
We  first prove that $|F \cap E(H[X_P])|\geq 4$. Observe that $H[X_P]-\{x_P^{u},x_P^{v},x_P^{w}\}$ contains 7 vertices of degree 3 in $H$ and these vertices are only incident to edges of $E(H[X_P])$ in $H$. As the graph obtained from $H$ by doubling the edges of $F$ is 4-edge-connected, we obtain that $|F \cap E(H[X_P])|\geq 4$. Hence, if $e_v \in F$ or $\{e_u,e_w\}\subseteq F$, there is nothing to prove. Further, as $F$ contains an edge in $\delta_H(X_P)$, we obtain that $F$ contains at least one edge of $\{e_u,e_v,e_w\}$. Suppose for the sake of a contradiction that $F$ does not contain $e_v$ and contains exactly one of $e_u$ and $e_w$.
\begin{subclaim}\label{5}
$|F \cap E(H[X_P])|\geq 5$.
\end{subclaim}
\begin{proof}
First suppose that $F$ contains the edge $e_u$. Observe that $H[X_P]-\{x_P^{u}\}$ contains 9 vertices of degree 3 in $H$ and these vertices are only incident to edges of $E(H[X_P])$ in $H-\{e_v,e_w\}$. As $F$ does not contain $e_v$ and $e_w$ and the graph obtained from $H$ by doubling the edges of $F$ is 4-edge-connected, we obtain that $|F \cap E(H[X_P])|\geq 5$.

Now suppose that $H$ contains the edge $e_w$. Observe that $H[X_P]-\{x_P^{w}\}$ contains 8 vertices of degree 3 in $H$ and these vertices are only incident to edges of $E(H[X_P])$ in $H-\{e_u,e_v\}$. Further, the only set of 4 edges in $E(H[X_P])$ containing at least one edge of all corresponding 3-edge cuts is $\{x_P^{v}x_P^{1},x_P^{2}x_P^{3},x_P^{4}x_P^{5},x_P^{6}x_P^{7}\}$. However if $F \cap E(H[X_P])=\{x_P^{v}x_P^{1},x_P^{2}x_P^{3},x_P^{4}x_P^{5},x_P^{6}x_P^{7}\}$, then as $e_u \notin F$, $F$ does not contain any edge of the 3-edge cut $\delta_H(\{x_P^{u},x_P^2,\ldots,x_P^5\})$, a contradiction. We hence obtain $|F \cap E(H[X_P])|\geq 5$.
\end{proof}

Let $F'=F-E(H[X_P])\cup \{e_u,x_P^{v}x_P^{1},x_P^{2}x_P^{3},x_P^{4}x_P^{5},x_P^{6}x_P^{7}\}$. By Subclaim \ref{5}, we have $|F'|\leq|F|\leq k+|V(G)|$. Further, it follows from Lemma \ref{cuts} that the graph obtained from $H$ by doubling all edges of $F'$ is 4-edge-connected. This contradicts the minimality of $F$.
\end{proof}

We are now ready to define a vertex cover $S \subseteq V(G)$ of $G$. Namely, we let $S$ include a vertex $v$ if $e_v \in F$. Observe that by Claim \ref{2nice} and Proposition \ref{trivial}, we have $|S|\leq|F|-4|\mathcal{P}_2|\leq (k+|V(G)|)-|V(G)|=k$. Now consider some $uv \in E(G)$. As $\mathcal{P}$ is legal, we obtain that $uv \in E(P)$ for some $P \in \mathcal{P}$. As $P$ is nice with respect to $F$ and by definition of $S$, we obtain that $S$ contains at least one of $u$ and $v$. Hence $S$ is a vertex cover of $G$. This finishes the proof.
\end{proof}

We wish to remark that the same proof technique can be used to prove that the problem of minimizing the arcs that are doubled is APX-hard, relying on a corresponding result for Cubic Vertex Cover by Alimonti and Kann \cite{AK}. Similar results follow for the partial orientation problems.
\subsection{Approximation algorithms}\label{approx34}

In this section, we give another application of the connections established in Lemmas \ref{lem:one} and \ref{lem:two}. Namely, we give an approximation algorithm for the problem of making a graph 4-edge-connected by doubling a minimum number of edges and conclude the existence of approximation algorithms for partial orientation problems from this.
We consider the following minimization problem which is a natural generalization of 4EDA:

\medskip
\begin{center}
\begin{tabular}{|ccc|}\hline
 & \begin{minipage}{14cm}
\vspace{2mm}
\noindent {\bf Minimum 4-Edge Doubling Augmentation \textbf{M4EDA}}
\smallskip

\noindent\textbf{Input:} A 2-edge-connected graph $G$.
\smallskip

\noindent\textbf{Question:} What is the minimum cardinality $OPT(G)$ of a set $F$ of edges in $E(G)$ such that the graph obtained from $G$ by doubling the edges in $F$ is 4-edge-connected?
\medskip
\vspace{2mm}
\end{minipage} & \\ \hline
\end{tabular}
\end{center}

Observe that the condition that the input graph is 2-edge-connected is necessary for a feasible solution to exist. In order to obtain an approximation result for M4EDA, we heavily rely on some previous work of Cecchetto, Traub and Zenklusen \cite{ctz}. They consider an optimization problem which contains the following problem as a special case:

\begin{center}
\begin{tabular}{|ccc|}\hline
 & \begin{minipage}{14cm}
\vspace{2mm}
\noindent {\bf Restricted 3-4 Edge-Connectivity Augmentation \textbf{R34ECA}}
\smallskip

\noindent\textbf{Input:} A 3-edge-connected graph $G$ and a graph $H$ with $V(H)=V(G)$ such that $(V(G),E(G)\cup E(H))$ is 4-edge-connected.
\smallskip

\noindent\textbf{Question:} What is the minimum cardinality $OPT'(G,H)$ of a set $F$ of edges in $E(H)$ such that the graph obtained from $G$ by adding the edges in $F$ is 4-edge-connected?
\medskip
\vspace{2mm}
\end{minipage} & \\ \hline
\end{tabular}
\end{center}

The following result follows from a result in \cite{ctz}.

\begin{theorem}\label{zenk}
    There is an algorithm $A_0$ whose input is an instance $(G,H)$ of R34ECA and that outputs a set $F\subseteq E(H)$ such that the graph obtained from $G$ by adding the edges of $F$ is 4-edge-connected and $|F|\leq \alpha OPT'(G,H)$ holds where $\alpha=1.393\ldots$ is a constant.
\end{theorem}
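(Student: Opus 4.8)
The plan is to observe that R34ECA is exactly the unit-cost special case of Weighted Tree Augmentation, and then to invoke the approximation algorithm of Cecchetto, Traub and Zenklusen \cite{ctz} for that problem; the constant $\alpha=1.393\ldots$ will be precisely the one they obtain.

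First I would recall the structure of the minimum edge cuts of a $3$-edge-connected graph. A short submodularity computation shows that in a graph whose edge-connectivity is odd no two minimum edge cuts cross, so the $3$-edge-cuts of $G$ form a cross-free family. As in the cactus representation of minimum cuts (Dinitz--Karzanov--Lomonosov), such a family can be encoded, in polynomial time, by a tree $T$ together with a map $\phi\colon V(G)\to V(T)$ so that: for each edge $t\in E(T)$, the set $\phi^{-1}(C)$, where $C$ is the vertex set of one of the two components of $T-t$, is a $3$-edge-cut of $G$; and conversely every $3$-edge-cut of $G$ arises this way from a unique $t\in E(T)$.

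Next I would translate the augmentation requirement. For any $F\subseteq E(H)$, the graph $G+F$ has no cut of size at most $2$ (since $G$ is $3$-edge-connected) and keeps every cut of size at least $4$; hence $G+F$ is $4$-edge-connected if and only if $F$ meets every $3$-edge-cut of $G$, which by the previous step means exactly that for every $t\in E(T)$ there is an edge $uv\in F$ with $\phi(u)$ and $\phi(v)$ in different components of $T-t$. Writing $L=\{\phi(u)\phi(v): uv\in E(H),\ \phi(u)\neq\phi(v)\}$, and keeping for each link of $L$ a pointer to one edge of $E(H)$ realizing it, this says precisely that $(G,H)$, as an instance of R34ECA, is equivalent to the Weighted Tree Augmentation instance on the tree $T$ with link set $L$ and all link costs equal to $1$: a set $F\subseteq E(H)$ is feasible for R34ECA with $|F|$ edges exactly when its image is a link set covering all edges of $T$, and conversely any cover of $T$ by links lifts through the pointers to a feasible $F$ of the same size. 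In particular the two optima coincide, so the minimum number of links needed to cover $T$ equals $OPT'(G,H)$, and it is finite because the hypothesis that $(V(G),E(G)\cup E(H))$ is $4$-edge-connected guarantees that $L$ covers every edge of $T$.

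Then the algorithm $A_0$ is immediate: compute $T$, $\phi$ and $L$ with its pointers in polynomial time; run the algorithm of \cite{ctz} for Weighted Tree Augmentation on $(T,L)$ to get a link set $L'$ covering $T$ with $|L'|\le\alpha\cdot OPT'(G,H)$; and output the set $F\subseteq E(H)$ obtained by replacing each link of $L'$ by the edge of $H$ it points to, so that $G+F$ is $4$-edge-connected and $|F|=|L'|\le\alpha\cdot OPT'(G,H)$. The only non-routine part is the reduction itself, namely identifying R34ECA with unit-cost Weighted Tree Augmentation; this rests on the fact that odd edge-connectivity forces the minimum cuts to have a tree structure, together with matching our setting to the precise input model of \cite{ctz} (their result is stated for Weighted Connectivity Augmentation, which for the connectivity value $3$ considered here, being odd, is exactly Weighted Tree Augmentation with a prescribed set of links). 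Computing the cut tree, lifting solutions, and the polynomial running time are all standard.
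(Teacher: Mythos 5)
Your proposal is correct and follows essentially the same route as the paper, which simply observes that R34ECA is a special case of the (unweighted) connectivity augmentation problem solved by Cecchetto, Traub and Zenklusen and cites their $1.393$-approximation; your write-up just makes explicit the standard odd-connectivity reduction (non-crossing minimum cuts, cactus-is-a-tree, cover the tree edges by links) that underlies that citation. One small caveat: the $1.393$-bound of \cite{ctz} is for the unweighted/unit-cost setting, which is exactly what you use, so nothing is lost.
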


Using this, we obtain the following result:

\begin{theorem}\label{appro}
    There is an algorithm $A$ whose input is an instance $G$ of M4EDA and that outputs a set $F\subseteq E(G)$ such that the graph obtained from $G$ by doubling the edges of $F$ is 4-edge-connected and $|F|\leq \alpha OPT(G,H)$ holds where $\alpha=1.393\ldots$ is the same constant as in Theorem \ref{zenk}.
\end{theorem}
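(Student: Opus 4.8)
The plan is to reduce M4EDA to R34ECA by inserting a cheap "3-edge-connectivity booster" so that we can invoke Theorem~\ref{zenk}. The first step is to observe that doubling an edge $e=uv$ is the same operation as \emph{adding} a parallel copy of $e$; so a feasible solution to M4EDA on $G$ is exactly a multiset $F$ of edges, each of which is already present in $G$, whose addition makes $G$ $4$-edge-connected. The obstacle is that R34ECA requires the base graph to be $3$-edge-connected, whereas in M4EDA the input $G$ is only $2$-edge-connected; and it also requires the "link graph" $H$ to be a simple parallel pool with $V(H)=V(G)$, which is fine here since $H$ can just be a copy of $G$.

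\medskip

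\noindent\textbf{Step 1: Make the base graph $3$-edge-connected cheaply.} First I would run a polynomial-time algorithm (e.g.\ based on the cactus representation of minimum cuts, or the classical Eswaran--Tarjan-style augmentation for $3$-edge-connectivity) to compute a minimum-size set $D$ of edges \emph{of $G$} whose doubling makes $G$ $3$-edge-connected; call the result $G_3$. The key inequality to establish is $|D|\le OPT(G)$: indeed any optimal M4EDA solution $F^*$ makes $G$ $4$-edge-connected, hence in particular $3$-edge-connected, so $F^*$ is itself a candidate for the $3$-edge-connectivity doubling problem, giving $|D|\le |F^*| = OPT(G)$. (One must also check that the $2$-edge-connectivity of $G$ guarantees such a $D$ exists and that the $3$-edge-connectivity augmentation-by-doubling can indeed be solved in polynomial time; this is where I expect the only real work to lie, and it is the main obstacle — everything else is bookkeeping.)

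\medskip

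\noindent\textbf{Step 2: Apply the R34ECA approximation.} Let $H_0$ be the graph on vertex set $V(G)$ consisting of one parallel copy of every edge of $G$ (so adding all of $E(H_0)$ to $G_3$ certainly yields a $4$-edge-connected graph, as it contains two copies of the $2$-edge-connected $G$, and moreover $G_3$ is already $3$-edge-connected). Then $(G_3,H_0)$ is a valid instance of R34ECA. Run algorithm $A_0$ from Theorem~\ref{zenk} to obtain $F_0\subseteq E(H_0)$ with $G_3+F_0$ $4$-edge-connected and $|F_0|\le \alpha\, OPT'(G_3,H_0)$. Output $F := D \cup F_0$ (viewed as a multiset of edges of $G$ to be doubled; note $G_3+F_0$ is exactly $G$ with the edges of $D\cup F_0$ doubled, so $F$ is a feasible M4EDA solution).

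\medskip

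\noindent\textbf{Step 3: Bound the cost.} It remains to show $|F| = |D|+|F_0| \le \alpha\, OPT(G)$. We have $|D|\le OPT(G)$ from Step~1. For the second term, I claim $OPT'(G_3,H_0)\le OPT(G) - |D|$: take an optimal M4EDA solution $F^*$ with $|F^*|=OPT(G)$; then $G+F^*$ (as doublings) is $4$-edge-connected, and since $G_3 = G+D$ contains $G$, the graph $G_3 + (F^*\setminus D)$ — adding the copies in $F^*$ not already used for $D$ — is $4$-edge-connected, with $F^*\setminus D\subseteq E(H_0)$ and $|F^*\setminus D|\le |F^*|-|D| = OPT(G)-|D|$. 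Hence $OPT'(G_3,H_0)\le OPT(G)-|D|$, so
\[
|F| = |D| + |F_0| \le |D| + \alpha\bigl(OPT(G)-|D|\bigr) = \alpha\, OPT(G) - (\alpha-1)|D| \le \alpha\, OPT(G),
\]
using $\alpha>1$. This proves Theorem~\ref{appro}. (The statement is phrased with $OPT(G,H)$ but M4EDA takes only $G$ as input, so this is just $OPT(G)$; I would silently correct that typo.) The combination with Lemmas~\ref{lem:one} and~\ref{lem:two} then yields $\alpha$-approximation algorithms for the corresponding $2$-arc-strong and $2$-strong partial orientation maximization/minimization problems on the relevant graph classes, by the same edge/digon correspondence used there.
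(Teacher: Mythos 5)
Your overall strategy is the same as the paper's (pre-double a cheap set of edges to reach 3-edge-connectivity, then invoke Theorem \ref{zenk} on the residual instance, then combine), but as written there are two concrete gaps. First, feasibility of the output: your $H_0$ contains a parallel copy of \emph{every} edge of $G$, including the edges of $D$ that are already doubled in $G_3$. Nothing in Theorem \ref{zenk} prevents $A_0$ from returning a copy of an edge of $D$, i.e.\ a \emph{third} parallel copy, and then your parenthetical claim that ``$G_3+F_0$ is exactly $G$ with the edges of $D\cup F_0$ doubled'' is false. Concretely, if $\{e,f\}$ is a 2-edge-cut of $G$ with $e\in D$, $f\notin D$, the corresponding cut of $G_3$ has size 3 and $A_0$ may legitimately repair it by adding a third copy of $e$; then the set $F=D\cup F_0$ does not contain $f$, and doubling $F$ in $G$ leaves this cut at size 3, so $F$ is not a feasible M4EDA solution. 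The paper avoids this by letting the link graph contain copies only of the edges \emph{not} already doubled (its $H$ is built on $E(G)-F_1$), and you would need to do the same (define $H_0$ on $E(G)\setminus D$), after which one must re-check that $G_3\cup H_0$ is still 4-edge-connected (it is, being $G$ with every edge doubled).

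Second, the cost analysis: the inequality $|F^*\setminus D|\le |F^*|-|D|$ is backwards for general sets; it holds only if $D\subseteq F^*$, and your justification of Step 1 (that $F^*$ is a feasible 3-edge-connectivity doubling set, hence $|D|\le|F^*|$) gives the cardinality bound but not the containment. The containment can in fact be rescued: doubling makes $G$ 3-edge-connected iff the doubled set meets every 2-edge-cut, so any inclusion-minimal (in particular any minimum) $D$ consists only of edges lying in 2-edge-cuts, and every such edge must belong to every feasible 4EDA solution (a 2-cut with an undoubled edge has size at most 3 after doubling). But neither observation appears in your argument, and without them the claimed bound $OPT'(G_3,H_0)\le OPT(G)-|D|$ does not follow. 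This is precisely why the paper takes $F_1$ to be \emph{all} edges lying in 2-edge-cuts rather than a minimum 3-edge-connectivity augmenting set: the containment $F_1\subseteq F^*$ is then immediate, no optimal 3-edge-connectivity-doubling subroutine is needed (you defer its polynomial solvability to an unproved claim; the paper's Theorem \ref{23easy} would supply it, but the paper's proof of Theorem \ref{appro} does not need it at all), and the exclusion of $F_1$ from $H$ simultaneously resolves the tripling issue above.
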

\begin{proof}
    We first outline the algorithm. Let $G$ be an instance of M4EDA and let $F_1$ be the set if edges in $E(G)$ that are contained in a 2-edge-cut of $G$. Let $G'$ be the graph obtained from $G$ by doubling the edges in $F_1$. We further let $H$ be the graph with $V(H)=V(G)$ that contains a copy $e'$ of every $e \in E(G)-F_1$. Observe that $(V(G),E(G')\cup E(H))$ can be obtained from $G$ by doubling all edges and is hence 4-edge-connected. We may hence apply the algorithm $A_0$ to the instance $(G',H)$ of R34ECA. Let $F_2'$ be the set of edges returned by $A_0$ and let $F_2$ be the set of corresponding edges in $E(G)$. We now let $A$ return $F=F_1 \cup F_2$.

    Using the fact that $A_0$ is polynomial, it is easy to see that $A$ is also polynomial. Further, as $F_2'$ is a feasible solution for the instance $(G,H)$ of R34ECA, we obtain that the graph obtained from $G'$ by doubling the edges in $F_2$ is 4-edge-connected and hence $F$ is a feasible solution for the instance $G$ of M4EDA. Now let $F^*$ be an optimal solution for the instance $G$ of M4EDA. As every edge in $F_1$ is contained in a 2-edge-cut of $G$, we obtain that $F_1 \subseteq F^*$. Let $F''$ be the set of edges in $E(H)$ that correspond to the edges in $F^*-F_1$. As the graph obtained from $G'$ by doubling the edges in $F^*-F_1$ is 4-edge-connected, we obtain $F''$ is a feasible solution for the instance $(G,H)$ of R34ECA. This yields $|F|=|F_1|+|F_2|\leq |F_1|+\alpha OPT'(G,H)\leq |F_1|+\alpha |F''|=|F_1|+\alpha (|F^*|-|F_1|)\leq \alpha |F^*|=\alpha OPT(G)$. Hence $A$ has all the desired properties. This finishes the proof.
\end{proof}

Combining Theorem \ref{appro} with Lemmas \ref{lem:one} and \ref{lem:two}, respectively, we obtain the following conclusions for partial orientations. While the first one is an immediate application of Theorem \ref{appro}, for the second one an argument similar to the one in the proof of Theorem \ref{appro} can be used. We leave the details to the interested reader.

\begin{corollary}
There is an algorithm whose input is a 2-edge-connected graph $G$ and that computes a 2-arc-strong partial orientation $M$ of $G$ such that the number of undirected edges in $M$ is at most $\alpha$ times bigger than in any other 2-arc-strong partial orientation of $G$.
\end{corollary}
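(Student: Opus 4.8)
The plan is to combine the approximation algorithm of Theorem \ref{appro} with the equivalence established in Lemma \ref{lem:one}. The first observation is that, for a $2$-edge-connected graph $G$, the quantity $OPT(G)$ from the instance $G$ of M4EDA is exactly the minimum number of undirected edges taken over all $2$-arc-strong partial orientations of $G$: by Lemma \ref{lem:one}, $G$ can be made $4$-edge-connected by doubling $k$ edges if and only if $G$ admits a $2$-arc-strong partial orientation with at most $k$ undirected edges, and conversely, so the two optima coincide.

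The algorithm is then as follows. On input $G$, first run algorithm $A$ of Theorem \ref{appro}, obtaining a set $F \subseteq E(G)$ with $|F| \le \alpha\, OPT(G)$ such that the graph $G'$ obtained from $G$ by doubling the edges of $F$ is $4$-edge-connected. Next, reproduce the ``only if'' direction of the proof of Lemma \ref{lem:one} constructively: by Theorem \ref{thm:nashWthm} there is a $2$-arc-connected orientation of $G'$, and one can be computed in polynomial time (e.g.\ via submodular flows as discussed in the introduction, or via the recursive splitting-off procedure); by Proposition \ref{oppo}, applied to the pairwise disjoint pairs of parallel edges arising from the edges of $F$, we may take this orientation $\vec{G'}$ so that the two copies of each $e \in F$ are oriented in opposite directions. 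Replacing each such opposite pair by a single undirected edge produces, by Proposition \ref{undigon}, a $2$-arc-strong mixed graph $M$, which is a partial orientation of $G$ whose set of undirected edges is precisely $F$. Output $M$. Since $|F| = |F| \le \alpha\, OPT(G)$ and $OPT(G)$ equals the minimum number of undirected edges over all $2$-arc-strong partial orientations of $G$, the mixed graph $M$ has the required approximation guarantee.

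The points needing care are purely about making every step polynomial and checking the hypotheses of the auxiliary results: algorithm $A$ is polynomial by Theorem \ref{appro}; the family $\{\{e,e'\} : e \in F\}$ consists of pairwise disjoint pairs of parallel edges of $G'$ (one pair per element of $F$, and distinct elements of $F$ give disjoint pairs), so Proposition \ref{oppo} applies; and a $2$-arc-connected orientation of a $4$-edge-connected graph respecting prescribed ``opposite-orientation'' pairs can be found in polynomial time. The only genuinely nontrivial ingredient is this last orientation subroutine together with the constraint from Proposition \ref{oppo}; everything else is bookkeeping already carried out in the proof of Lemma \ref{lem:one}.
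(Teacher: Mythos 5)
Your proposal is correct and follows exactly the route the paper intends: the paper derives this corollary as an "immediate application" of Theorem \ref{appro} combined with Lemma \ref{lem:one}, leaving the details (which you spell out, including the constructive use of Proposition \ref{oppo} and Proposition \ref{undigon}) to the reader. Nothing essential differs between your argument and the paper's.
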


\begin{corollary}
There is an algorithm whose input is a 2-vertex-connected graph $G$ and that computes a 2-strong partial orientation $M$ of $G$ such that the number of undirected edges in $M$ is at most $\alpha$ times bigger than in any other 2-strong partial orientation of $G$.
\end{corollary}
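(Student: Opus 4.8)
The plan is to follow the proof of Theorem~\ref{appro} almost verbatim, inserting one extra step to cope with the vertex-deletion requirement of Lemma~\ref{lem:two}. By Lemma~\ref{lem:two}, finding a $2$-strong partial orientation of $G$ with as few undirected edges as possible is the same as finding a smallest set of edges whose doubling turns $G$ into a graph $G^{\ast}$ that is $4$-edge-connected and has $G^{\ast}-v$ $2$-edge-connected for every $v\in V(G)$; moreover such a set exists, since doubling all edges of the $2$-edge-connected graph $G$ works (every $G-v$ is connected, so doubling its edges makes it $2$-edge-connected). Hence it suffices to give a polynomial-time $\alpha$-approximation for this doubling problem and then convert its output, through the constructive proof of Lemma~\ref{lem:two} (which produces the partial orientation from a $2$-strong orientation of the doubled graph obtained by the polynomial-time algorithm behind Theorem~\ref{thomassen}), into a $2$-strong partial orientation with the same number of undirected edges.

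First I would pin down the edges lying in every feasible solution. As in the proof of Theorem~\ref{appro}, every edge that lies in a $2$-edge-cut of $G$ must be doubled. In addition, if an edge $e$ is a bridge of $G-v$ for some $v$, then $e$ is the only edge of $G-v$ across the corresponding cut, so in any feasible doubled graph $G^{\ast}$ with $G^{\ast}-v$ $2$-edge-connected this cut must be crossed twice, forcing $e$ to be doubled. Let $F_1$ be the union of all these edges, taken over all $2$-edge-cuts and all vertices $v$; it is computable in polynomial time (bridges of each $G-v$ and $2$-edge-cuts of $G$), and it is contained in every feasible solution, in particular in an optimal one $F^{\ast}$. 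Let $G'$ be obtained from $G$ by doubling $F_1$. Since doubling just the $2$-edge-cut edges already makes $G$ $3$-edge-connected and extra doublings cannot reduce connectivity, $G'$ is $3$-edge-connected; and since every bridge of every connected graph $G-v$ has been doubled, $G'-v$ is connected and bridgeless, hence $2$-edge-connected, for all $v$.

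Now I would run the R34ECA algorithm $A_0$ of Theorem~\ref{zenk} exactly as in the proof of Theorem~\ref{appro}: let $H$ have vertex set $V(G)$ and a copy of each edge of $E(G)\setminus F_1$, observe that $E(G')\cup E(H)$ is $G$ with every edge doubled and hence $4$-edge-connected, apply $A_0$ to $(G',H)$ to get $F_2'\subseteq E(H)$ with $|F_2'|\le\alpha\,OPT'(G',H)$ making the graph obtained from $G'$ by adding the edges of $F_2'$ $4$-edge-connected, and let $F_2\subseteq E(G)\setminus F_1$ be the corresponding edges. Output $F=F_1\cup F_2$. The graph obtained from $G$ by doubling $F$ is $4$-edge-connected; it contains $G'$ as a subgraph, and since $G'-v$ is $2$-edge-connected for every $v$ and this property is preserved when further edges are added, the doubled graph also has all vertex-deleted subgraphs $2$-edge-connected, so $F$ is feasible. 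For the ratio, $F^{\ast}\setminus F_1$ is feasible for the R34ECA instance $(G',H)$ (doubling it on top of $G'$ is the same as doubling $F^{\ast}$ in $G$), so $OPT'(G',H)\le|F^{\ast}|-|F_1|$, and therefore $|F|=|F_1|+|F_2|\le|F_1|+\alpha(|F^{\ast}|-|F_1|)=\alpha|F^{\ast}|-(\alpha-1)|F_1|\le\alpha|F^{\ast}|$. Converting $F$ back through Lemma~\ref{lem:two} yields the claimed algorithm.

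The point needing care --- the main obstacle --- is that the edge-doubling machinery of Theorem~\ref{appro} is built only for plain $4$-edge-connectivity, whereas here we also need $G^{\ast}-v$ $2$-edge-connected for every $v$. The reason the reduction still goes through is that this extra condition (i) forces only the bridges of the graphs $G-v$, which are easy to find and, being contained in the optimum, can be pre-doubled at no cost to the ratio, and (ii) is monotone under adding edges, so once it holds for $G'$ the R34ECA black box can be applied to $G'$ obliviously without endangering it. Everything else is a transcription of the proof of Theorem~\ref{appro} together with the routine constructive content of Lemma~\ref{lem:two}.
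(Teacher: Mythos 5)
Your proposal is correct and follows exactly the route the paper intends: it sketches the second corollary by saying ``an argument similar to the one in the proof of Theorem~\ref{appro} can be used'' and leaves the details to the reader, and your argument supplies precisely those details (pre-doubling the forced edges --- the $2$-edge-cut edges of $G$ together with the bridges of the graphs $G-v$ --- so that the R34ECA black box of Theorem~\ref{zenk} applies, and using monotonicity of the vertex-deleted $2$-edge-connectivity condition under edge addition, then converting back via Lemma~\ref{lem:two}). The accounting $|F|=|F_1|+|F_2|\le |F_1|+\alpha(|F^*|-|F_1|)\le\alpha|F^*|$ matches the paper's argument, so no further comment is needed.
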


\subsection{2 to 3-edge-connectivity augmentation by doubling edges is polynomial}\label{sec23}

The NP-hardness of 34EDA raises the question whether this problem becomes better tractable when weaker connectivity conditions are seeked for. For making a connected graph 2-edge-connected, this is easily seen to be the case as the set of all bridges forms an optimal solution. In this section, we show that a positive algorithmic result is also available for 3-edge-connectivity, even in the more general weighted setting.
We consider the following problem:

\medskip
\begin{center}
\begin{tabular}{|ccc|}\hline
 & \begin{minipage}{14cm}
\vspace{2mm}
\noindent {\bf Weighted 2-3 Edge Doubling Augmentation \textbf{W23EDA}}
\smallskip

\noindent\textbf{Input:} A 2-edge-connected graph $G$, a weight function $w:E(G)\rightarrow \mathbb{R}_{\geq 0}$, an integer $k$.
\smallskip

\noindent\textbf{Question:} Can $H$ be made 3-edge-connected by doubling at set $F$ of edges with $w(F)<k$?
\medskip
\vspace{2mm}
\end{minipage} & \\ \hline
\end{tabular}
\end{center}

\smallskip
We prove the following:

\begin{theorem}\label{23easy}
W23EDA can be solved in polynomial time. Moreover, for positive instances, an optimal solution can be found in polynomial time.
\end{theorem}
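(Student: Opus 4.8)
The plan is to reduce the problem to a weighted matroid intersection, or more directly, to exploit the well-known structure of the \emph{cactus representation} of the 2-edge-cuts of a 2-edge-connected graph. Recall that in a 2-edge-connected graph $G$, the minimum edge cuts (of size $2$) can be organized via the cactus tree $\mathcal{C}(G)$: contracting the 3-edge-connected components gives a cactus (a connected graph in which every edge lies on exactly one cycle), and the 2-edge-cuts of $G$ correspond exactly to the pairs of edges on a common cycle of the cactus. Making $G$ 3-edge-connected by doubling edges amounts to destroying every such 2-edge-cut: doubling an edge $e$ of $G$ increases the size of precisely those 2-edge-cuts that contain $e$. Thus we need a minimum-weight set $F\subseteq E(G)$ hitting every 2-edge-cut, where a doubled edge $e$ ``hits'' a cut $\{e,f\}$ iff $e\in\{e,f\}$.

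\textbf{Key steps.} First I would compute the cactus representation $\mathcal{C}(G)$ of the 2-edge-cuts in polynomial time (using e.g. the algorithm of Dinitz--Karzanov--Lomonosov, or the simpler $O(nm)$ constructions). Each nontrivial cycle $C$ of the cactus has a set $E_C$ of ``cactus edges'', and each cactus edge corresponds either to a bridge-like structure or to a nonempty set of original edges of $G$; a 2-edge-cut of $G$ is a choice of one original edge from each of two distinct cactus edges lying on the same cycle. Second, observe that to destroy all 2-edge-cuts on a cycle $C$ with $|E_C|=\ell$ cactus edges, it suffices (and is necessary) that, for the chosen set $F$, at most one cactus edge on $C$ ``survives'' with all its original copies non-doubled; equivalently $F$ must contain, for all but at most one cactus edge on $C$, \emph{all} original $G$-edges mapping to it. This reformulates the problem, cycle by cycle, as: on each cactus cycle, pick one cactus edge to leave untouched and double every $G$-edge underlying every other cactus edge of that cycle, doing so consistently across cycles that share a vertex of the cactus. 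Third, since distinct cycles of a cactus share at most one vertex and the structure is tree-like, the choices on different cycles interact only through shared cactus vertices; I would set this up as a dynamic program over the cactus (rooted at an arbitrary 3-edge-connected component), where for each cycle we decide which incident cactus edge is ``saved'', the weight of a decision being the total $w$-weight of all $G$-edges under the non-saved cactus edges of the cycle. This DP runs in polynomial time and also reconstructs an optimal $F$ for positive instances.

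\textbf{Main obstacle.} The subtle point — and the step I expect to require the most care — is correctly handling the cactus edges that lie on two cycles simultaneously (shared-vertex structure) and the ``bridge'' edges of the cactus (which correspond to actual bridges in contracted subgraphs but here $G$ is 2-edge-connected so these are length-2 cycles, i.e.\ genuine 2-edge-cuts consisting of two parallel cactus edges). One must verify that doubling edges to break the cuts on one cycle cannot inadvertently be ``reused'' to break cuts on an incident cycle in a way the naive DP double-counts or under-counts; the cleanest way is to phase the argument entirely in terms of the cactus: a set $F\subseteq E(G)$ makes $G$ 3-edge-connected iff, after deleting from each cactus cycle every cactus edge all of whose $G$-preimages lie in $F$, each former cactus cycle becomes acyclic in the cactus — equivalently the resulting multigraph on the 3-edge-connected components is a forest-free-of-2-cuts, i.e.\ is itself 3-edge-connected or trivial. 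I would prove this equivalence carefully as a lemma, then the DP over the cactus (which is essentially a tree of cycles) gives the polynomial algorithm and the optimal solution. Weights enter only as edge costs in the DP, so the weighted case is no harder than the unweighted one.
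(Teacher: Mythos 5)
Your overall route is essentially the paper's: the paper also passes to the quotient $Q_G$ obtained by contracting the 3-edge-connected classes, shows $Q_G$ is a cactus, shows the instance is equivalent to the instance on the cactus, and then characterizes the feasible doubling sets on a cactus, after which the optimization is a minimum spanning tree computation (Kruskal). Your ``Key steps'' characterization is the right one: $F$ works iff on every cactus cycle all but at most one cactus edge is doubled, and since in a cactus every edge lies on exactly one cycle, the choices for different cycles are completely independent; your worry about cactus edges lying on two cycles is moot, and no dynamic program or consistency argument across shared vertices is needed --- one simply leaves out a maximum-weight edge of each cycle, which is exactly the MST of the cactus that the paper computes.

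One point needs fixing, because you announce it as the lemma you would ``prove carefully'': the statement that $F$ is feasible iff, after \emph{deleting} the doubled cactus edges, ``each former cactus cycle becomes acyclic'' is false as written --- it is strictly weaker than feasibility. For example, if a cactus cycle has four edges and you double two opposite ones, the remaining two edges are acyclic, yet they still form a 2-edge-cut of $G$, so the doubled graph is not 3-edge-connected. The correct formulations are the one from your second step (at most one undoubled cactus edge per cycle), equivalently that the doubled cactus edges form a connected spanning subgraph of the cactus (this is the paper's cactus lemma), equivalently that \emph{contracting} (not deleting) the doubled edges in the cactus yields a 3-edge-connected or single-vertex graph. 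Since your algorithm as described is driven by the correct per-cycle condition, this is a repairable misstatement rather than a flaw in the algorithm, but the proof of correctness must rest on the stronger condition, not on per-cycle acyclicity.
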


Let $G$ be a 2-edge-connected graph and $u,v \in V(G)$. We say that $u \sim v$ if $\lambda_G(u,v)\geq 3$. Observe that $\sim$ is an equivalence relation on $V(G)$. We let $Q_G$ be the graph that contains a vertex $v$ for each eqiuvalence class $B_v$ of $\sim$ and that contains an edge $uv$ for every edge of $G$ linking the eqiuvalence classes $B_u$ and $B_v$. A {\bf cactus} is an undirected graph $G$ that satisfies $\lambda_G(u,v)=2$ for all distinct $u,v \in V(G)$. 
\begin{lemma}\label{gqcactus}
    Let $G$ be a 2-edge-connected graph. Then $Q_G$ is a cactus.
\end{lemma}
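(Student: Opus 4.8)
\textbf{Proof plan for Lemma \ref{gqcactus}.}
The plan is to show directly that every pair of distinct vertices in $Q_G$ has local edge-connectivity exactly $2$. First I would establish the lower bound $\lambda_{Q_G}(u,v)\geq 2$ for all distinct $u,v\in V(Q_G)$. This should follow from the fact that $G$ is $2$-edge-connected together with the way $Q_G$ is obtained from $G$: contracting each equivalence class $B_w$ of $\sim$ into a single vertex is a quotient operation that cannot decrease edge-connectivity between surviving (uncontracted) vertices, and since $G$ is $2$-edge-connected every pair of its vertices has $\lambda_G\geq 2$. More carefully, a cut of size $<2$ in $Q_G$ separating $u$ from $v$ would pull back to a cut of size $<2$ in $G$ separating some vertex of $B_u$ from some vertex of $B_v$, contradicting $2$-edge-connectivity of $G$; one should also note $Q_G$ is loopless, since an edge of $G$ inside a class $B_w$ does not give an edge of $Q_G$, and connected, so $Q_G$ has at least the right structure.

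Next I would prove the upper bound: $\lambda_{Q_G}(u,v)\leq 2$ for all distinct $u,v$. The key point here is that if $\lambda_{Q_G}(u,v)\geq 3$ for some pair, then I want to lift this back to $G$ to conclude $\lambda_G(x,y)\geq 3$ for some $x\in B_u$, $y\in B_v$ — but that would force $x\sim y$, hence $B_u=B_v$, contradicting $u\neq v$. The lifting step is where one must be slightly careful: contraction can \emph{increase} connectivity, so $\lambda_{Q_G}(u,v)\geq 3$ does not by itself immediately give $\lambda_G(x,y)\geq 3$. The clean way is to use that within each equivalence class $B_w$, any two vertices are joined by $\lambda_G\geq 3$, i.e.\ by $3$ edge-disjoint paths inside $G$; combined with $3$ edge-disjoint $u$–$v$ paths in $Q_G$ (which exist by Menger if $\lambda_{Q_G}(u,v)\geq 3$) and the standard fact that $3$ edge-disjoint paths through a sequence of ``super-vertices'' each of which is internally $3$-edge-connected can be stitched together, one recovers $3$ edge-disjoint $x$–$y$ paths in $G$ for any chosen $x\in B_u,y\in B_v$. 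I expect this stitching argument — making precise that high internal connectivity of the contracted parts lets edge-disjoint paths in the quotient be realized as edge-disjoint paths upstairs — to be the main obstacle, though it is a standard type of argument; a slicker alternative is to invoke the structure of the cactus/$3$-edge-connected-component decomposition directly, i.e.\ cite that $Q_G$ is exactly the ``$3$-cut cactus'' of $G$ and that such quotients are known to be cacti.

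Finally I would assemble the two bounds: for all distinct $u,v\in V(Q_G)$ we get $\lambda_{Q_G}(u,v)=2$, which is precisely the definition of a cactus given in the excerpt, completing the proof. I would also double-check the degenerate cases ($Q_G$ having one or two vertices) are either excluded by $G$ being $2$-edge-connected with enough vertices or handled vacuously, and that no self-loops arise in $Q_G$ so that the local-connectivity statement is meaningful for every pair.
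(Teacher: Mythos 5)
Your lower bound ($\lambda_{Q_G}(u,v)\geq 2$) is fine and is exactly the paper's argument: every cut of $Q_G$ lifts to a cut of $G$ of the same size, so $2$-edge-connectivity of $G$ gives the bound. The problem is in your upper bound. You propose to assume $\lambda_{Q_G}(u,v)\geq 3$, take three edge-disjoint $u$--$v$ paths in $Q_G$, and ``stitch'' them into three edge-disjoint $x$--$y$ paths in $G$ using the claim that each contracted class is ``internally $3$-edge-connected.'' That premise is false: the classes $B_w$ are equivalence classes of the relation $\lambda_G(\cdot,\cdot)\geq 3$, and the three edge-disjoint paths witnessing $\lambda_G\geq 3$ between two vertices of the same class may run through the rest of $G$; the induced subgraph $G[B_w]$ need not be $3$-edge-connected, nor even connected (e.g.\ in the graph consisting of two vertices $x,y$ joined by three internally disjoint paths of length two, $\{x,y\}$ is a class inducing no edge at all). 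Consequently the entry/exit vertices of your lifted paths inside a class cannot in general be joined by paths confined to that class, and if you allow the connecting paths to leave the class you lose edge-disjointness with the other lifted paths. So the stitching step, which you yourself flag as the main obstacle, is a genuine gap as described.

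The paper avoids this entirely by arguing with cuts rather than paths, which is the clean route here: pick $u'\in B_u$, $v'\in B_v$ with $B_u\neq B_v$; then $\lambda_G(u',v')=2$, so there is $\bar X\subseteq V(G)$ with $u'\in\bar X$, $v'\notin\bar X$ and $d_G(\bar X)=2$. Such a $2$-cut cannot split any class $B_w$ (two vertices of the same class have $\lambda_G\geq 3$), so $\bar X$ is a union of classes and projects to a set $X\subseteq V(Q_G)$ with $u\in X$, $v\notin X$ and $d_{Q_G}(X)=d_G(\bar X)=2$, giving $\lambda_{Q_G}(u,v)\leq 2$. If you want to keep your Menger-style formulation, you should replace the stitching by this contrapositive cut argument (they prove the same inequality); alternatively, citing the known cactus structure of minimum cuts would work but amounts to invoking a black box rather than proving the lemma.
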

\begin{proof}
    Let $u,v \in V(Q_G)$ with $u \neq v$ and consider some $X \subseteq V(Q_G)$ with $u \in X$ and $v \in V(Q_G)-X$. Let $\bar{X}=\bigcup_{x \in X}B_x$. Then $d_{Q_G}(X)=d_G(\bar{X})\geq 2$, so $\lambda_{Q_G}(u,v)\geq 2$.

    Now consider some $u'\in B_u$ and $v'\in B_v \neq B_u$. As $\lambda_G(u',v')=2$, there is some $\bar{X}\subseteq V(G)$ with $u' \in \bar{X}$, $v' \in V(G)-\bar{X}$ and $d_G(\bar{X})=2$. By definition of $Q_G$, we have $B_w\cap \bar{X}=\emptyset$ or $B_w \subseteq \bar{X}$ for all $w \in V(Q_G)$. Let $X=\{w \in V(Q_G):B_w \subseteq \bar{X}\}$. Observe that $u \in X$ and $v \in V(Q_G)-X$. This yields $\lambda_{Q_G}(u,v)\leq d_{Q_G}(X)=d_G(\bar{X})=2$.

    Hence $\lambda_{Q_G}(u,v)=2$ and so the statement follows.
\end{proof}

Let $G$ be a 2-edge-connected graph and $\bar{w}:E(G)\rightarrow \mathbb{R}_{\geq 0}$ a weight function. Then $w:E(Q_G)\rightarrow \mathbb{R}_{\geq 0}$ denotes the weight function in which $w(e)=\bar{w}(\bar{e})$ holds for every $e \in E(Q_G)$ where $\bar{e}$ is the edge in $E(G)$ that corresponds to $e$.

\begin{lemma}\label{gqgleich}
  Let $G$ be a 2-edge-connected graph, $\bar{w}:E(G)\rightarrow \mathbb{R}_{\geq 0}$ a weight function and $k$ a constant. Then $(G,\bar{w},k)$ is a positive instance of $W23EDA$ if and only if $(Q_G,w,k)$ is a positive instance of $W23EDA$. Further, an optimal solution for $(G,\bar{w})$ can be obtained from an optimal solution of $(Q_G,w)$.
\end{lemma}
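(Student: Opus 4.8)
The plan is to show that doubling edges in $G$ and doubling the corresponding edges in $Q_G$ have the same effect on $3$-edge-connectivity, so that feasible solutions transfer back and forth while preserving weight. The key structural fact is that for a $2$-edge-connected graph $G$, the $3$-edge-cuts of $G$ are precisely the preimages of the $3$-edge-cuts of $Q_G$: indeed, every $2$- or $3$-edge-cut $\delta_G(\bar X)$ must have $B_w \cap \bar X = \emptyset$ or $B_w \subseteq \bar X$ for each equivalence class $B_w$, since two vertices in the same class cannot be separated by fewer than $3$ edges; hence $\bar X$ is the union of the classes in some $X \subseteq V(Q_G)$, and $d_G(\bar X) = d_{Q_G}(X)$. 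Conversely every cut of $Q_G$ lifts to a cut of $G$ of the same size. Thus $G$ is $3$-edge-connected iff $Q_G$ is, and more importantly the cuts of size exactly $2$ that must be ``hit'' by doubling are in bijection, with the additional observation that doubling an edge $\bar e$ of $G$ whose endpoints lie in the same class $B_w$ is useless: such an edge lies in no $2$-edge-cut of $G$ (as $\lambda_G$ between its endpoints is $\geq 3$), so it can be removed from any solution without harm. Therefore we may assume an optimal (or any feasible) solution for $(G,\bar w)$ uses only edges between distinct classes, i.e.\ edges corresponding to edges of $Q_G$.

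Concretely, first I would prove the claim above that every edge-cut of $G$ of size at most $3$ is a lift of an edge-cut of $Q_G$, using exactly the argument already used in the proof of Lemma~\ref{gqcactus} for size-$2$ cuts (the same ``crossing''/uncrossing reasoning shows a class cannot be split by $\leq 3$ edges, because splitting it would exhibit two of its vertices with local connectivity $\leq 3$; one has to be a little careful here — a size-$3$ cut could in principle split a class if the split itself uses all $3$ edges, so the precise statement is that a minimal such cut does not split a class, or one argues via the submodularity of $d_G$ that any $3$-edge-cut can be refined). Second, using this correspondence, observe $F \subseteq E(G)$ makes $G$ $3$-edge-connected by doubling iff the doubled graph has no cut of size $2$; translating through the cut correspondence, this holds iff the image $F' \subseteq E(Q_G)$ (discarding intra-class edges, which is valid by the uselessness remark) makes $Q_G$ $3$-edge-connected by doubling. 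Since $w(e) = \bar w(\bar e)$ and intra-class edges removed only decrease weight, we get $w(F') \leq \bar w(F)$ in one direction and equality can be arranged; in the other direction lifting $F'$ back gives a solution of equal weight. This proves the ``positive instance iff positive instance'' equivalence and shows an optimal solution of $(Q_G,w)$ lifts to an optimal solution of $(G,\bar w)$.

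The main obstacle I expect is the subtlety in the $3$-edge-cut correspondence: unlike the $2$-edge-cut case handled in Lemma~\ref{gqcactus}, a $3$-edge-cut of $G$ \emph{could} a priori separate two vertices of the same equivalence class, since membership in a class only forbids \emph{$2$}-edge-cuts between them, not $3$-edge-cuts. The resolution is that we do not actually need every $3$-edge-cut to be a lift — we only need that $G$ (or its doubling) is $3$-edge-connected iff the same holds for $Q_G$, and \emph{that} follows purely from the $2$-edge-cut correspondence plus Lemma~\ref{gqcactus}: the graph $(V,E)$ with some edges doubled has a $2$-edge-cut iff, after contracting each class of $G$'s relation (note the relation is computed in $G$, not in the doubled graph, but doubling only increases connectivity so classes of $G$ are still ``$\geq 3$-connected'' after doubling), the quotient has a $2$-edge-cut; and the quotient of the doubled graph is exactly $Q_G$ with the edges of $F'$ doubled. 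So the clean route is to phrase everything in terms of $2$-edge-cuts only and invoke Lemma~\ref{gqcactus} as a black box, avoiding the $3$-edge-cut classification entirely; I would write the proof that way.
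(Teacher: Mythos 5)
Your final plan -- working only with cuts of size at most $2$, using that vertices in a common class satisfy $\lambda_G\geq 3$ (which survives doubling) so any such cut respects the classes and corresponds, with equal size, to a cut of $Q_G$ with the inter-class doubled edges, while intra-class doubled edges can be dropped without increasing the (nonnegative) weight -- is correct and is essentially the paper's own proof. You were also right to abandon the opening claim that every $3$-edge-cut of $G$ lifts from $Q_G$ (it need not), and the paper never uses such a classification either.
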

\begin{proof}
    First suppose that $(Q_G,w,k)$ is a positive instance of $W23EDA$, so there is an edge set $F\subseteq E(Q_G)$ with $w(F)\leq k$ such that the graph $Q_G'$ obtained from $Q_G$ by doubling all the edges of $F$ is 3-edge-connected. Let $\bar{F} \subseteq E(G)$ be the set of edges corresponding to $F$ and let $G'$ be the graph obtained from $G$ by doubling the edges of $\bar{F}$. Clearly, $\bar{F}$ can be constructed from $F$ in polynomial time and  we have $\bar{w}(\bar{F})=w(F)$. Now consider some $\bar{X}\subseteq V(G)$. If there is some $v \in V(Q_G)$ such that $X\cap B_v \neq \emptyset$ and $X-B_v \neq \emptyset$, we obtain $d_{G'}(\bar{X})\geq d_G(\bar{X})\geq 3$ by the definition of $Q_G$. Otherwise, let $X=\{w \in V(Q_G):B_w \subseteq \bar{X}\}$. We obtain $d_{G'}(\bar{X})= d_{Q_G'}(X)\geq 3$. Hence $G'$ is 3-edge-connected and $(G,\bar{w},k)$ is a positive instance of $W23EDA$.

    Now suppose that $(G,\bar{w},k)$ is a positive instance of $W23EDA$, so there is an edge set $\bar{F}\subseteq E(G)$ with $\bar{w}(\bar{F})\leq k$ such that the graph $G'$ obtained from doubling the edges of $\bar{F}$ is 3-edge-connected. Let $F\subseteq E(Q_G)$ be the set of edges corresponding to $F$ and let $Q_G'$ be the graph obtained from $Q_G$ by doubling the edges of $F$. Clearly, we have $w(F)=\bar{w}(\bar{F})$. For some $X \subseteq V(Q_G)$, let $\bar{X}=\bigcup_{x \in X}B_x$. We obtain $d_{Q_G'}(X)=d_{G'}(\bar{X})\geq 3$. Hence $Q_G'$ is 3-edge-connected and $(Q_G,w,k)$ is a positive instance of $W23EDA$.

\end{proof} 
For the next lemma, we need the following well-known property of cactuses.

\begin{proposition}\label{trivialC}
   Every cactus contains a vertex of degree 2.
\end{proposition}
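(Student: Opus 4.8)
The statement to prove is Proposition~\ref{trivialC}: every cactus contains a vertex of degree~2.

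The plan is to argue by a counting/extremal argument on the structure of cactuses. Recall that a cactus here is defined as an undirected graph $G$ with $\lambda_G(u,v)=2$ for all distinct $u,v\in V(G)$; in particular $G$ is 2-edge-connected, so every vertex has degree at least~2, and no edge lies in two distinct cycles (if an edge lay in two cycles, two of its incident vertices would have local edge-connectivity at least~3). Equivalently, the standard fact is that in such a graph every block is either a single edge or a cycle; but since $G$ is 2-edge-connected with at least two vertices, there are no bridge-blocks, so every block is a cycle. Thus $G$ is a connected graph whose blocks are all cycles.

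First I would set up the block-cut tree $T$ of $G$: its nodes are the blocks (all cycles) and the cut vertices of $G$, with a block $B$ adjacent to a cut vertex $c$ iff $c\in B$. Since $G$ is 2-connected exactly when it is a single cycle — in which case every vertex trivially has degree~2 and we are done — I may assume $G$ has at least two blocks, hence $T$ has at least two leaves, and every leaf of $T$ is a block (a cut vertex always has degree at least~2 in $T$). Take a leaf block $B$; it is a cycle containing exactly one cut vertex $c$ of $G$. Every vertex of $B$ other than $c$ is not a cut vertex of $G$ and lies in no other block, so its only incident edges are the two edges of the cycle $B$; hence it has degree exactly~2 in $G$. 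Since $B$ is a cycle it has at least three vertices, so such a vertex exists. This produces the desired degree-2 vertex.

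The main thing to be careful about is justifying that every block of a cactus is a cycle. The cleanest route: $G$ is 2-edge-connected (from $\lambda_G(u,v)\ge 2$ for all pairs, applied with the convention that 2-edge-connectivity is the global minimum of local edge-connectivities), so no block is a bridge; and if some block $B$ were 2-connected but not a cycle, it would contain two vertices joined by three internally disjoint paths, giving $\lambda_G\ge 3$ between them, contradicting the cactus hypothesis. So every block is a cycle, and a block that is not just a single vertex has at least three vertices. One could alternatively phrase the whole argument without the block-cut tree: pick a vertex $v_0$ and a longest path or a DFS, or simply observe that if every vertex had degree at least~3 then $|E(G)|\ge \tfrac32|V(G)|$, while a connected graph whose blocks are all cycles satisfies $|E(G)|\le |V(G)|-1+(\text{number of blocks})$ and the number of cycle-blocks is at most $|V(G)|-1$, forcing $|E(G)|\le 2|V(G)|-2$ — but the edge count gives $|E|\ge \tfrac32|V|$, which is compatible, so a pure counting bound is not quite sharp enough and the leaf-block argument is the right one. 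I will therefore present the leaf-block argument as the proof.
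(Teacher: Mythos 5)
The paper itself gives no proof of Proposition~\ref{trivialC}: it is invoked as a well-known fact, so there is no argument of the authors to compare yours against; what you supply is essentially the standard proof one would insert. Your reasoning is correct in substance: from $\lambda_G(u,v)=2$ for all pairs you deduce that $G$ is connected with no bridge-blocks and that every block is a cycle (via the fact that a $2$-connected block which is not a cycle contains a theta subgraph, whose two branch vertices would have local edge-connectivity at least $3$), and then either $G$ is a single cycle or a leaf block of the block-cut tree contains a non-cut vertex, all of whose incident edges lie in that cycle block, so it has degree $2$. Two blemishes are worth fixing, though neither breaks the proof. First, the parenthetical claim that an edge lying on two distinct cycles forces \emph{its two endpoints} to have local edge-connectivity at least $3$ is false: take the edge $uv$ together with the paths $u\,a\,b\,v$ and $u\,a\,b\,c\,v$; then $uv$ lies on two cycles, yet $d(u)=2$, so $\lambda(u,v)=2$. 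The correct statement, which is the one you actually use later, is that \emph{some} pair of vertices (the branch vertices of the resulting theta) has local edge-connectivity at least $3$; the side remark should simply be dropped. Second, the paper allows parallel edges, and the cactus $Q_G$ to which the proposition is applied can indeed contain digons, so a cycle block may have only two vertices; hence ``since $B$ is a cycle it has at least three vertices'' is not justified. But all you need is that the leaf block has at least two vertices and therefore contains a vertex other than its unique cut vertex, and that vertex has degree $2$ whether the block is a digon or a longer cycle, so the argument goes through after this small correction.
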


\begin{lemma}\label{baum}
    Let $G$ be a cactus and $F \subseteq E(G)$. Then the graph $G'$ obtained from $G$ by doubling the edges of $G$ is 3-edge-connected if and only if $(V(G),F)$ is connected.
\end{lemma}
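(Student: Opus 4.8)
The statement asserts: for a cactus $G$ and $F \subseteq E(G)$, the graph $G'$ obtained by doubling the edges of $F$ is 3-edge-connected if and only if $(V(G),F)$ is connected. (I read "doubling the edges of $G$" in the statement as a typo for "doubling the edges of $F$".) My plan is to prove both directions, using the structural fact (Proposition \ref{trivialC}) that every cactus has a vertex of degree $2$, which suggests an induction on $|V(G)|$.

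\textbf{The easy direction ($\Rightarrow$).} I would prove the contrapositive: if $(V(G),F)$ is disconnected, then $G'$ is not 3-edge-connected. Let $Z$ be a connected component of $(V(G),F)$ with $\emptyset \neq Z \subsetneq V(G)$. Since $G$ is a cactus it is 2-edge-connected, so $d_G(Z)$ is even and at least $2$; in fact, since $G$ is a cactus, $d_G(Z) = 2$ exactly when $Z$ induces a "connected piece" of the cactus — but I do not even need that: I just need $d_G(Z) = 2$ for some such $Z$. Here is where I must be a little careful: I should pick $Z$ to be a component of $(V(G),F)$ that is also "tight" in $G$, i.e. with $d_G(Z) = 2$. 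Such a $Z$ exists: take the cactus $G$, and among all components of $(V(G),F)$ choose one whose vertex set $Z$ has $d_G(Z)$ minimal; by the cactus structure, $G/(V\setminus Z)$ is again (essentially) a cactus and a minimal such cut has size $2$ — alternatively, simply note that no edge of $F$ crosses $\delta_G(Z)$ (as $Z$ is a union of $F$-components, actually a single one), so all edges of $\delta_{G'}(Z)$ come from $\delta_G(Z)\setminus F = \delta_G(Z)$, giving $d_{G'}(Z) = d_G(Z)$. Thus if we can find a component $Z$ of $(V(G),F)$ with $d_G(Z)=2$ we are done. If $(V(G),F)$ is disconnected, consider the cactus structure: contracting each $F$-component to a point yields a graph $\bar G$ which is still a cactus (edge-cuts only decrease under contraction while staying even, and cactus-ness is preserved by contraction) on at least two vertices; by Proposition \ref{trivialC}, $\bar G$ has a vertex of degree $2$, which pulls back to a component $Z$ of $(V(G),F)$ with $d_G(Z)=2$, hence $d_{G'}(Z)=2<3$ and $G'$ is not 3-edge-connected.

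\textbf{The harder direction ($\Leftarrow$).} Suppose $(V(G),F)$ is connected; I must show $G'$ is 3-edge-connected. I would argue by induction on $|V(G)|$, using a degree-$2$ vertex $v$ of $G$ (Proposition \ref{trivialC}). Let $e_1,e_2$ be the two edges of $G$ at $v$. Since $F$ is a connected spanning subgraph of $G$, $F$ must contain at least one of $e_1,e_2$; say $e_1 \in F$. Form $G_0$ by suppressing $v$ (replace the path $e_1,e_2$ through $v$ by a single edge $e$ joining the two neighbours of $v$, or, if the two neighbours coincide, handle that small case directly). Then $G_0$ is again a cactus on fewer vertices. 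Set $F_0 = (F \setminus \{e_1,e_2\}) \cup \{e\}$ if $e_2 \in F$ as well, and $F_0 = (F\setminus\{e_1\})\cup\{e\}$ if $e_2\notin F$; in either case $(V(G_0),F_0)$ is connected (suppressing $v$ preserves connectivity and $e_1\in F$ ensures $e$ "inherits" the needed connection). By induction $G_0'$ is 3-edge-connected. Now I need to lift this back: $G'$ is obtained from $G_0'$ by subdividing the edge(s) corresponding to $e$ at $v$ — but with multiplicities given by which of $e_1,e_2$ lie in $F$. The key verification is that every cut of $G'$ separating $v$ from the rest has size $\ge 3$: this cut is $\delta_{G'}(v)$, whose size is $2 + |\{i : e_i \in F\}| \ge 3$ since $e_1 \in F$. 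Every other cut of $G'$ corresponds to a cut of $G_0'$ of the same or larger size, hence has size $\ge 3$. This completes the induction.

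\textbf{Main obstacle.} The delicate point is the bookkeeping in the $(\Leftarrow)$ direction around the degree-$2$ vertex: ensuring that $F$ always contains enough edges at $v$ (at least one, possibly two) so that both the contracted instance $(G_0,F_0)$ stays connected \emph{and} the local cut $\delta_{G'}(v)$ has size $\ge 3$, and correctly matching edge multiplicities when passing between $G$ and $G_0$. The parallel-edge / coinciding-neighbours corner cases of the cactus must be checked but are routine. An alternative to induction that sidesteps some of this is to argue directly: take any $\emptyset \neq X \subsetneq V(G)$; since $F$ is connected, $\delta_F(X) \neq \emptyset$, so at least one doubled edge crosses $X$, contributing $2$ to $d_{G'}(X)$; and since $G$ is 2-edge-connected with $d_G(X)$ even, $d_G(X) \ge 2$, but I need the doubled edge to be \emph{in addition to} enough original edges — so I must rule out $d_G(X) = 2$ with both those edges lying in $F$. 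This happens only if $\delta_G(X) \subseteq F$ and $d_G(X)=2$; but then $X$ and $V\setminus X$ are each unions of $F$-components minus the crossing edges... this requires exactly the cactus structure to push through, so I would in the end still reduce to the degree-$2$-vertex analysis. I expect the inductive write-up to be cleanest.
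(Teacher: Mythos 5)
Your inductive argument for the direction ``$F$ connected $\Rightarrow$ $G'$ 3-edge-connected'' has a genuine gap at the lifting step. You claim that every cut of $G'$ other than $\delta_{G'}(v)$ corresponds to a cut of $G_0'$ of the same or larger size, but this is false. Let $n_1,n_2$ be the neighbours of $v$ via $e_1,e_2$, with $e_1\in F$ and $e_2\notin F$, and take $X$ with $v,n_1\in X$ and $n_2\notin X$. In $G_0'$ the new edge $e$ lies in $F_0$ and is doubled, so it contributes $2$ to $d_{G_0'}(X\setminus\{v\})$, while in $G'$ the only edge at $v$ crossing the cut is the single copy of $e_2$; hence $d_{G'}(X)=d_{G_0'}(X\setminus\{v\})-1$, and the induction hypothesis $d_{G_0'}(X\setminus\{v\})\geq 3$ only gives $d_{G'}(X)\geq 2$. (One can rescue this by showing that equality $d_{G_0'}(X\setminus\{v\})=3$ would force $\delta_G(X)\cap F=\emptyset$, contradicting connectivity of $F$ -- but that extra argument is exactly the direct cut count you abandoned.) Indeed, this whole direction is the \emph{easy} one and is a one-liner, as in the paper: for any $\emptyset\neq X\subsetneq V(G)$ we have $d_{G'}(X)=d_G(X)+d_F(X)\geq 2+1=3$, since a cactus is 2-edge-connected and a spanning connected $F$ meets every cut. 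Your worry that one must ``rule out $d_G(X)=2$ with both cut edges in $F$'' is a miscount: in that case $d_{G'}(X)=4$, which is fine. So no induction, suppression of $v$, or multiplicity bookkeeping is needed here at all.

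For the other direction your plan is sound in outline but rests on the unproved assertion that contracting the $F$-components of a cactus again yields a cactus; the parenthetical justification is also oriented the wrong way (under contraction the available cuts only shrink, so local edge-connectivity can only \emph{increase}, whereas cactusness requires the upper bound $\lambda\leq 2$ to survive -- one must exhibit a 2-cut with the contracted set entirely on one side, which uses the block structure of the cactus and is not automatic). The claim is true, but it needs an argument. The paper proves this direction by a minimal-counterexample argument that sidesteps it: if $G'$ is 3-edge-connected but $(V(G),F)$ is disconnected with $G$ smallest, pick a degree-2 vertex $v$ (Proposition \ref{trivialC}); since $d_{G'}(v)\geq 3$, some edge $e\in\delta_G(v)$ lies in $F$; contracting this single edge keeps $G/e$ a cactus, keeps the doubled graph 3-edge-connected, and keeps $(V(G/e),F-e)$ disconnected, contradicting minimality. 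In short: you have the difficulty of the two directions inverted, the inductive step you wrote for the easy direction does not close as stated, and the contraction claim in the other direction needs justification.
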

\begin{proof}
    First suppose that $(V(G),F)$ is connected and let $X \subseteq V(G)$. Then $d_{G'}(X)\geq d_G(X)+d_F(X)\geq 2+1=3$, so $G'$ is 3-edge-connected.

    Now suppose that $G'$ is 3-edge-connected and for the sake of a contradiction that $(V(G),F)$ is not connected. Further suppose that the size of $G$ is minimal among all graphs admitting an edge set with that property. By Proposition \ref{trivialC}, there is some $v \in V(G)$ with $d_G(v)=2$. Clearly, $F$ contains an edge $e \in \delta_G(v)$. Observe that $G/e$ is a cactus and the graph obtained from $G/e$ by doubling the edges in $F-e$ is 3-edge-connected. Further $(V(G/e),F-e)$ is not connected, a contradiction to the minimality of $G$.
\end{proof}
\begin{proof}(of Theorem \ref{23easy})

By Lemmas \ref{gqcactus} and \ref{gqgleich}, it suffices to prove the statement for cactuses. By Lemma \ref{baum}, this can be done by finding a minimum spanning tree of the cactus with respect to the given weight function. This can be done in polynomial time, for example using the algorithm of Kruskal, see \cite{kv}.
\end{proof}

\section{Deorientations}\label{sec:deor}

In a digraph $D$, the operation of replacing an arc $a \in A(D)$ by an undirected edge linking the same two vertices is called {\bf deorienting} the arc.  
Given a digraph $D$ and an integer $k$, we wish to know whether we can deorient at most $k$ of the arcs of $D$ so that the obtained mixed graph satisfies certain connectivity properties. Let $deor_k(D)$, respectively $deor_k^{arc}(D)$ denote the minimum number of arcs one needs to deorient in $D$ to obtain a mixed graph which is $k$-strong, respectively $k$-arc-strong. Clearly $deor_1(D)=deor_1^{arc}(D)$.
The following result, which shows that $deor_1(D)$ can be found in polynomial time, is a consequence of the theorem of Lucchesi and Younger \cite{LY}, see also \cite[Section 13.1]{BG}.
\begin{theorem}\label{strong}
Let $D=(V,A)$ be a digraph and $k$ a positive integer. Then we can decide in polynomial time whether there exists a strongly connected deorientation of $D$ with at most $k$ edges.
\end{theorem}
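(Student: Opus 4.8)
The plan is to show that the minimum number of arcs one must deorient to make $D$ strongly connected is exactly the minimum cardinality of a directed cut cover (a \emph{dijoin}) of $D$, and then invoke the Lucchesi--Younger theorem. First, for $F\subseteq A(D)$ let $M_F$ be the mixed graph obtained by deorienting the arcs of $F$, and let $D_F$ be the digraph obtained from $D$ by adding, for each arc $uv\in F$, a parallel copy of the reverse arc $vu$. A mixed graph is $1$-strong precisely when every nonempty proper vertex set has a leaving arc or an incident edge, and replacing an undirected edge by the digon on its two ends preserves this property; hence $M_F$ is strongly connected if and only if $D_F$ is strongly connected, and it suffices to minimise $|F|$ subject to $D_F$ being strong.

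Next I would characterise when $D_F$ is strong. For every nonempty proper $X\subsetneq V$ we have $d^+_{D_F}(X)=d^+_D(X)+|F\cap\delta^-_D(X)|$, since the new arcs leaving $X$ are exactly the reverses of the arcs of $F$ that enter $X$. Therefore $D_F$ is strong if and only if for every nonempty proper $X$ with $d^+_D(X)=0$ the set $F$ meets $\delta^-_D(X)$. But the arc sets $\delta^-_D(X)$ with $\emptyset\neq X\subsetneq V$ and $d^+_D(X)=0$ are, by definition (and, taking complements, equivalently the sets $\delta^+_D(Y)$ with $d^-_D(Y)=0$), exactly the directed cuts of $D$. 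Hence $D_F$ is strong if and only if $F$ is a dijoin of $D$, so the minimum number of arcs to deorient equals the minimum size of a dijoin, with the convention that both are $+\infty$ exactly when $UG(D)$ is disconnected (then a weakly connected component induces an empty directed cut, which no $F$ can meet, and indeed no deorientation of a disconnected digraph is strong).

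Finally, the theorem of Lucchesi and Younger \cite{LY} (see also \cite[Section 13.1]{BG}) states that the minimum size of a dijoin equals the maximum number of arc-disjoint directed cuts and can be computed in polynomial time. The algorithm for Theorem \ref{strong} is then immediate: check in linear time whether $UG(D)$ is connected; if not, answer ``no''; otherwise compute the minimum dijoin size of $D$ and answer ``yes'' if and only if it is at most $k$. The same computation gives the common value $deor_1(D)=deor_1^{arc}(D)$.

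I expect the only delicate step to be the bookkeeping in the characterisation: one must get the orientation right, namely that deorienting an arc $uv$ is precisely what supplies the missing outgoing arc for \emph{every} set containing $v$ but not $u$, and one must separate off the disconnected case, where the reduction degenerates because an empty directed cut appears and no finite deorientation works. Beyond that there is no combinatorial obstacle; the entire content of the proof is the classical polynomial-time solvability of the minimum dijoin problem.
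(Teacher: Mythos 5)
Your proposal is correct and is essentially the paper's intended argument: the paper states the result without detail as "a consequence of the theorem of Lucchesi and Younger", and the standard reduction behind that remark is exactly your identification of minimum-cardinality deorientation sets with minimum dijoins (deorienting $uv$ acting like adding the reverse arc, so a set of arcs works if and only if it meets every directed cut), plus the degenerate disconnected case you correctly set aside. The only cosmetic point is that the min--max statement of Lucchesi--Younger and the polynomial-time computability of a minimum dijoin are formally separate facts (the latter covered by the cited \cite[Section 13.1]{BG}), but this does not affect the correctness of your argument.
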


In \cite{BG} (Problem 14.6.6), the first author and Gutin raised the question whether Theorem \ref{strong} can be generalized for stronger connectivity properties. The main result of this section is that there is no hope to do so as soon as the resulting mixed graph is required to be $k$-strong for some $k \geq 3$. More concretely, we show that computing the minimum number of arcs we need to deorient in a given digraph to obtain a 3-strong digraph is NP-hard. The following two problems are left for further research. The second one is already mentioned in \cite{bangDAM136}.

\begin{problem}
Determine the complexity of deciding for a given digraph $D$ and an integer $k$; whether $D$ has a deorientation with at most $k$ edges  that is $2$-strong.
\end{problem}

\begin{problem}\label{probarc}
For some integer $\ell \geq 2$, determine the complexity of deciding for a given digraph $D$ and an integer $k$; whether $D$ has a deorientation with at most $k$ edges that is $\ell$-arc-connected.
\end{problem}

The rest of this section is structured as follows: In Section \ref{3somdure}, we prove our main hardness result mentioned above. The remaining parts contain some smaller results on deorientation problems. In Section \ref{degrees}, we show that we can find in polynomial time a minimum set of arcs whose  deorientation makes the arising mixed graph satisfy certain degree conditions. In Section \ref{local}, we show that the problem of finding a minimum number of arcs whose deorientation makes a given digraph satisfy some local arc-connectivity requirements is NP-hard. In Section \ref{appro2}, we give an approximation algorithm for making a given digraph satisfy a global arc-connectivity requirement.

\subsection{Deorienting to get a 3-strong mixed graph is NP-hard}\label{3somdure}

This section is concerned with proving that the problem of deciding whether a 3-strong mixed graph can be obtained from a given digraph by deorienting a given number of arcs is NP-hard. We need some preliminaries for our reduction:
For a mixed graph $M$ and some $S \subseteq V(M)$, we say that $M$ is {\bf $\mathbf{k}$-strong in $S$} if there are $k$ vertex-disjoint paths from $s_1$ to $s_2$ in $M$ for any $s_1,s_2 \in S$.\\

The following is easy to prove (see Exercise 14.8 in \cite{BG} for a related result).

\begin{proposition}\label{connin}
Let $M$ be a mixed graph which is $k$-strong in $S$ and let $v \in V(M)-S$. Further, suppose that there are $k$ $vs_i$-paths $P_1,\ldots,P_k$ in $M$ with $s_i \in S$ for $i=1,\ldots,k$ and $V(P_i)\cap V(P_j)=v$ for all $i,j \in \{1,\ldots,k\}$ with $i \neq j$ and that there are $k$ $s'_iv$-paths $P'_1,\ldots,P'_k$ in $M$ with $s'_i \in S$ for $i=1,\ldots,k$ and $V(P'_i)\cap V(P'_j)=v$ for all $i,j \in \{1,\ldots,k\}$ with $i \neq j$. Then $M$ is $k$-strong in $S \cup v$.
\end{proposition}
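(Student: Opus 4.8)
The plan is to verify the $k$-strong-in-$S$ condition directly: given two vertices $a,b \in S \cup v$, I must exhibit $k$ internally-disjoint $ab$-paths in $M$. If both $a,b \in S$, this is immediate from the hypothesis that $M$ is $k$-strong in $S$. So the only genuine case is when exactly one of them equals $v$, and by considering the two directions separately (since $M$ is mixed, paths are directed) I need to handle $a = v$, $b \in S$ and $a \in S$, $b = v$. These two cases are symmetric in structure, using the out-fan $P_1,\dots,P_k$ in the first case and the in-fan $P'_1,\dots,P'_k$ in the second; so I would write one case in full and remark that the other is analogous.

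So fix $b = s \in S$ and build $k$ internally-disjoint $vs$-paths. I already have $k$ $vs_i$-paths $P_1,\dots,P_k$ meeting pairwise only in $v$, with endpoints $s_1,\dots,s_k \in S$ (which may or may not be distinct, and may or may not include $s$). The idea is to route from $v$ out along the $P_i$ to the $s_i$, and then from the $s_i$ onward to $s$ using the $k$-strong-in-$S$ property. First I would truncate each $P_i$ at its \emph{first} vertex lying in $S$; call that vertex $s_i'$ and the resulting initial segment $Q_i$. After this truncation the $Q_i$ are internally-disjoint $vs_i'$-paths whose internal vertices all lie in $V(M) - S$, hence are disjoint from $S$ entirely except at the endpoints $s_i'$. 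Now I want to extend to $s$. Apply the $k$-strong-in-$S$ hypothesis: there is no single fan from the $s_i'$ to $s$ given to me, so instead I would pass to an auxiliary argument via Menger's theorem. Form $M'$ by adding a new vertex $v'$ joined to each $s_i'$ (with one new vertex for each occurrence, i.e.\ $v'$ has $k$ out-arcs, one into each $Q_i$'s endpoint) — actually cleaner: in $M$, contract the paths $Q_i$; equivalently, I claim there are $k$ internally-disjoint paths from $\{s_1',\dots,s_k'\}$ to $s$ avoiding the internal vertices of the $Q_i$.

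The cleanest route is Menger on a modified graph. Let $M^\circ$ be obtained from $M$ by deleting all internal (non-endpoint) vertices of $Q_1,\dots,Q_k$ and adding a new source $v^\circ$ with an arc $v^\circ s_i'$ for each $i$. Since $M$ is $k$-strong in $S$, and the deleted vertices are not in $S$, the graph $M^\circ - v^\circ + \{\text{those arcs}\}$ still contains, for any two members of $S$, $k$ internally-disjoint paths — in particular, there is no vertex cut of size $< k$ separating $v^\circ$ from $s$ in $M^\circ$, because such a cut (not containing $v^\circ$ or $s$) would, after noting it misses the $s_i'$ that $v^\circ$ dominates… here is where I must be careful: I would argue that $v^\circ$ together with $s$ has $k$ internally-disjoint paths in $M^\circ$ by checking that every $v^\circ$–$s$ vertex-cut has size $\geq k$, using that the $s_i'$ lie in $S$ and $M$ is $k$-strong in $S$. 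Then concatenating the $Q_i$ (restoring the deleted internal vertices) with these $k$ paths from $M^\circ$, and discarding the artificial $v^\circ$, yields $k$ $vs$-paths in $M$; internal-disjointness holds because the $Q_i$ are internally disjoint from each other and from everything in $M^\circ$ (their internal vertices were deleted there), and the $M^\circ$-paths are internally disjoint from each other.

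The main obstacle is precisely the bookkeeping in that last paragraph: the paths $P_i$ may share vertices with $S$ beyond their endpoints, and the vertices $s_i'$ need not be distinct, so one cannot naively "glue" a second fan onto the first. Handling this via the truncation-plus-Menger argument, and correctly verifying that no small cut separates the artificial source from $s$ (which is where the $k$-strong-in-$S$ hypothesis is actually consumed), is the delicate part; the rest is routine. Given the parenthetical pointer to Exercise 14.8 of \cite{BG}, I expect the intended proof is exactly this kind of fan-concatenation / Menger argument, possibly phrased more slickly, and I would aim to match that level of brevity rather than spelling out every cut.
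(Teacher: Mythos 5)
Your case split and the idea of reducing to a Menger-type argument are the right starting point, but the proof fails at exactly the step you flagged as delicate, and it cannot be repaired by more careful bookkeeping: the claim that in $M^\circ$ (delete the internal vertices of $Q_1,\ldots,Q_k$, add a source $v^\circ$ with arcs to the $s_i'$) every cut separating $v^\circ$ from $s$ has size at least $k$ is false. Deleting vertices outside $S$ does not preserve $k$-strongness in $S$, because the paths witnessing that property may run through the deleted vertices; worse, the stronger statement you are implicitly proving, namely that the fixed truncated fan $Q_1,\ldots,Q_k$ can always be extended to $k$ internally disjoint paths from $v$ to $s$, is false. A counterexample with $k=2$: take vertices $v,x,y,a,b,c,s$, $S=\{a,b,s\}$, and arcs (written tail--head) $vx,xa,vy,yb$ (the out-fan $P_1=vxa$, $P_2=vyb$), together with $ac,bc,cs,ay,ys,bx,xs,sa,sx,sb,sy,ab,ba,sv,av$ (the last two provide the in-fan $s\rightarrow v$, $a\rightarrow v$). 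One checks that $M$ is $2$-strong in $S$ (e.g. $a$ to $s$ via $acs$ and $ays$, $b$ to $s$ via $bcs$ and $bxs$, and so on), and the conclusion of the proposition does hold ($vys$ and $vxacs$ are internally disjoint). However $Q_1=vxa$, $Q_2=vyb$, and after deleting $x$ and $y$ the only arc entering $s$ is $cs$, so the single vertex $c$ separates $v^\circ$ from $s$ in $M^\circ$; equivalently, there is no pair of internally disjoint paths from $v$ to $s$ of which one starts with $Q_1$ and the other with $Q_2$. (Also, your worry about the $s_i$ or $s_i'$ coinciding is vacuous: since $V(P_i)\cap V(P_j)=\{v\}$ and $s_i\neq v$, these vertices are automatically distinct.)

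The repair is to run the cut argument in $M$ itself and not insist on extending any particular fan. Fix $s\in S$ and suppose some $X\subseteq V(M)-\{v,s\}$ with $|X|<k$ meets every path from $v$ to $s$. Since the $P_i$ pairwise intersect only in $v$ and $|X|<k$, some $P_i$ avoids $X$; its endpoint $s_i\in S$ is then not in $X$, and since $M$ is $k$-strong in $S$ there are $k$ internally disjoint paths from $s_i$ to $s$, at least one of which avoids $X$. Concatenating gives a walk, hence a path, from $v$ to $s$ in $M-X$, a contradiction. By Menger's theorem (applied to the digraph obtained from $M$ by replacing each edge by a digon) there are $k$ internally disjoint paths from $v$ to $s$; the symmetric argument with $P_1',\ldots,P_k'$ gives $k$ internally disjoint paths from $s$ to $v$, and pairs of vertices inside $S$ are covered by the hypothesis. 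This is the short argument the paper presumably has in mind when it calls the proposition easy.
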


For our reduction, we need the following problem:
\medskip
\begin{center}
\begin{tabular}{|ccc|}\hline
 & \begin{minipage}{14cm}
\vspace{2mm}

\textbf{3-Bounded MAX 2-SAT (3BMAX2SAT)}
\medskip

\textbf{Input:} A set of variables $X$, a set of clauses $\mathcal{C}$ each containing exactly two literals such that every variable of $X$ appears exactly 3 times in $\mathcal{C}$, at least once in positive and at least once in negated form, and an integer $\ell$.
\medskip

\textbf{Question:} Is there an assignment $\Phi:X \rightarrow \{TRUE,FALSE\}$ such that at least $\ell$ clauses of $\mathcal{C}$ are satisfied?

\medskip

\vspace{2mm}
\end{minipage} & \\ \hline
\end{tabular}
\end{center}

\smallskip
We use the following result that is implicitely proven by Berman and Karpinski in \cite{bk}.

\begin{proposition}\label{thom}
3BMAX2SAT is NP-hard.
\end{proposition}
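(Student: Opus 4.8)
The plan is to obtain Proposition~\ref{thom} essentially as a specialization of the bounded-occurrence hardness results of Berman and Karpinski \cite{bk}. Their amplifier constructions yield instances of MAX-2SAT in which every clause has exactly two literals and every variable occurs in (at most, and after trivial padding exactly) three clauses, and NP-hardness of the associated threshold problem is part of what they establish; so the first step is simply to invoke this, noting that for an NP-hardness statement (as opposed to the sharper inapproximability results of \cite{bk}) it suffices to track the single integer $\ell$. The only requirement of 3BMAX2SAT that is not literally handed to us is the condition that each variable occur \emph{at least once positively and at least once negatively}, so the remaining work is to enforce this either by observing that the amplifier gadgets of \cite{bk} already have it by symmetry, or by a short value-preserving reduction from their instances to ours.

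For that reduction I would proceed as follows. First, repeatedly remove ``monochromatic'' variables: if a variable $x$ occurs only positively, then in any assignment setting $x:=\mathrm{TRUE}$ satisfies every clause containing $x$ and affects no other clause, so an optimal assignment may be taken with $x=\mathrm{TRUE}$; hence we may delete $x$ together with the clauses containing it and decrease the target by the number of deleted clauses (the all-negative case is symmetric). Since each such step removes a variable, this terminates, leaving a formula in which every remaining variable occurs with both signs but possibly fewer than three times (deleting clauses can only lower occurrence counts). Second, pad every under-occurring variable back up to exactly three mixed-sign occurrences: for each missing occurrence of a variable $z$ with a prescribed sign, introduce a fresh block on new variables $a,b,c$ with the three clauses $(a\vee\bar b),(b\vee\bar c),(c\vee\bar a)$ — these force $a=b=c$ in any satisfying assignment, are jointly satisfiable (both the all-true and all-false settings work), and leave each of $a,b,c$ with exactly two occurrences of opposite sign — together with one linking clause containing $z$ with the required sign and one of $a,\bar a$; this completes $a,b,c$ to three mixed-sign occurrences, supplies $z$ with the missing occurrence, and is satisfiable for \emph{every} value of $z$. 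Each block contributes a fixed number of always-satisfiable clauses, which we add to $\ell$. Iterating this (the fresh blocks introduce only already-complete variables, so no new deficiencies arise) produces a valid instance of 3BMAX2SAT, and since the padding is always fully satisfiable and interacts with the original clauses only through the one-sided linking clauses, the optimum of the new instance equals the optimum of the original plus the total number of added clauses; hence the yes/no answer is preserved.

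The main obstacle is the global bookkeeping in the padding step: the freshly introduced variables must themselves end up with exactly three occurrences of both signs, which is why one cannot get away with appending individual clauses and must instead organize the padding around self-contained, jointly satisfiable blocks whose internal occurrence balance is exactly right. Once such a family of blocks is fixed, checking the syntactic constraints of 3BMAX2SAT and the equality of optimal values is routine, and it reduces the whole argument to the hardness imported from \cite{bk}; and should a direct inspection of the amplifier gadgets of \cite{bk} confirm that their instances already satisfy the mixed-sign condition, this entire reduction collapses and Proposition~\ref{thom} is immediate.
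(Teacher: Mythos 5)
Your overall strategy (import the hardness of three-occurrence MAX-2SAT from Berman--Karpinski and, if necessary, repair the sign condition by a value-preserving reduction) is in the spirit of what the paper does -- the paper in fact gives no proof at all and simply asserts that the statement is implicit in \cite{bk}, which corresponds to the final ``collapse'' branch of your proposal. However, your fallback reduction contains a genuine flaw in the padding step. In the block $(a\vee\bar b),(b\vee\bar c),(c\vee\bar a)$ augmented by a single linking clause containing $z$ and one of $a,\bar a$, only $a$ is raised to three occurrences; the variables $b$ and $c$ occur exactly twice each, so the output instance violates the requirement of 3BMAX2SAT that \emph{every} variable occur exactly three times. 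Consequently the assertion that ``the fresh blocks introduce only already-complete variables, so no new deficiencies arise'' is false, and the iteration you describe does not terminate in a syntactically valid instance. This is exactly the global bookkeeping obstacle you yourself flag as the main difficulty, and the block you propose does not resolve it.

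The defect is repairable with a slightly larger self-contained block: for example, take fresh variables $a,b,c$ with the four internal clauses $(a\vee\bar b),(b\vee\bar c),(c\vee\bar a),(\bar a\vee b)$, in which $a$ and $b$ already have three mixed-sign occurrences, and use $(z^{\sigma}\vee c)$ as the linking clause, which completes $c$; setting $a=b=c=\mathrm{TRUE}$ satisfies all five added clauses irrespective of the value of $z$, so your value-preservation argument (optimum of the new instance equals the old optimum plus the number of added clauses) goes through verbatim. The first phase of your reduction -- eliminating variables that occur with only one sign, deleting their clauses and lowering the threshold accordingly -- is correct as written. With the gadget repaired, your argument is a legitimate, more explicit justification of Proposition \ref{thom} than the paper's bare citation of \cite{bk}; as written, though, the padding step fails.
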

In order to simplify our reduction, we need the following slight adaption of this problem:
\medskip
\begin{center}
\begin{tabular}{|ccc|}\hline
 & \begin{minipage}{14cm}
\vspace{2mm}

\textbf{Special 3-Bounded MAX 2-SAT (S3BMAX2SAT)}
\medskip

\textbf{Input:} A set of variables $X$, a set of clauses $\mathcal{C}$ each containing exactly two literals such that every variable appears exactly twice in positive and exactly once in negated form in $\mathcal{C}$ and an integer $\ell$.
\medskip

\textbf{Question:} Is there an assignment $\Phi:X \rightarrow \{TRUE,FALSE\}$ such that at least $\ell$ clauses of $\mathcal{C}$ are satisfied?

\medskip

\vspace{2mm}
\end{minipage} & \\ \hline
\end{tabular}
\end{center}

\begin{proposition}\label{thom}
S3BMAX2SAT is NP-hard.
\end{proposition}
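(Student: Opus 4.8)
The plan is to reduce from 3BMAX2SAT, which is NP-hard by the preceding Proposition. Let $(X,\mathcal{C},\ell)$ be an instance of 3BMAX2SAT, so every variable $x\in X$ occurs exactly three times in $\mathcal{C}$, with at least one positive and at least one negative occurrence. Thus each variable occurs in one of exactly two patterns: either twice positively and once negatively, or once positively and twice negatively. The variables of the first type are already in the ``special'' form required by S3BMAX2SAT, so nothing need be done to them. For a variable $x$ of the second type (once positive, twice negative), the idea is to flip its role: introduce a fresh variable $x'$ intended to represent $\neg x$, and replace every literal $x$ by $\neg x'$ and every literal $\neg x$ by $x'$ throughout $\mathcal{C}$. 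After this substitution $x'$ occurs twice positively and once negatively, which is exactly the pattern S3BMAX2SAT demands. Carrying out this substitution for all variables of the second type yields a clause set $\mathcal{C}'$ over the variable set $X'$ in which every variable occurs exactly twice positively and exactly once negatively; output the instance $(X',\mathcal{C}',\ell)$.

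The correctness argument is a straightforward bijection between assignments. Given $\Phi:X\to\{TRUE,FALSE\}$, define $\Phi':X'\to\{TRUE,FALSE\}$ by $\Phi'(x')=\Phi(x)$ if $x$ was a first-type variable (so $x'=x$), and $\Phi'(x')=\neg\Phi(x)$ if $x$ was a second-type variable. Then for every clause $C\in\mathcal{C}$ and its image $C'\in\mathcal{C}'$, a literal of $C$ is satisfied by $\Phi$ if and only if the corresponding literal of $C'$ is satisfied by $\Phi'$, because each substitution $x\mapsto\neg x'$, $\neg x\mapsto x'$ preserves the truth value of the literal under the paired assignments. Hence $C$ is satisfied by $\Phi$ exactly when $C'$ is satisfied by $\Phi'$, so the number of satisfied clauses is identical. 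This map $\Phi\mapsto\Phi'$ is a bijection between assignments over $X$ and assignments over $X'$, so $(X,\mathcal{C},\ell)$ is a positive instance of 3BMAX2SAT if and only if $(X',\mathcal{C}',\ell)$ is a positive instance of S3BMAX2SAT. The reduction is clearly computable in polynomial (indeed linear) time.

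One technical point to address is that the definition of S3BMAX2SAT requires each clause to contain exactly two literals, so after substitution we must make sure no clause degenerates; since the substitution only renames variables and toggles signs, it neither merges nor deletes literals, and a clause $\{x,\neg x\}$ in $\mathcal{C}$ (if such tautological clauses are permitted at all) maps to $\{\neg x',x'\}$, which is still a two-literal clause of the same truth behaviour, so nothing breaks. I do not anticipate a serious obstacle here: the only thing to be careful about is bookkeeping the occurrence counts, namely verifying that after flipping exactly the second-type variables every variable of $X'$ ends up with the prescribed profile of two positive and one negative occurrence, which follows immediately from the case analysis of the two possible occurrence patterns in a 3BMAX2SAT instance.
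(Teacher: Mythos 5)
Your reduction is correct and is essentially the paper's own argument: the paper also flips the polarity of exactly those variables appearing once positively and twice negatively (by negating their literals in all clauses), which is the same operation as your renaming $x\mapsto x'$ with $x'=\neg x$. The bijection between assignments preserving the number of satisfied clauses is exactly the correctness argument the paper leaves implicit.
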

\begin{proof}
    Let $(X,\mathcal{C},\ell)$ be an instance 3BMAX2SAT. Let $X^-$ be the set of variables in $X$ that appear once in positive and twice in negated form. Let $\mathcal{C}'$ be the set of clauses which is obtained from $\mathcal{C}$ by negating the literals associated to the variables in $X^-$. Then $(X,\mathcal{C}',\ell)$ is an instance of S3BMAX2SAT and it is easy to see that $(X,\mathcal{C}',\ell)$ is a positive instance of S3BMAX2SAT if and only if $(X,\mathcal{C},\ell)$ is a positive instance of 3BMAX2SAT.
\end{proof}
Formally, we consider the following problem:
\medskip

\begin{center}
\begin{tabular}{|ccc|}\hline
 & \begin{minipage}{14cm}
\vspace{2mm}
\textbf{3-Strong DeOrientation (3SDO)}
\medskip

\textbf{Input:} A digraph $D$, an integer $k$.
\medskip

\textbf{Question:} Can $D$ be made 3-strong by deorienting at most $k$ arcs?

\vspace{2mm}
\end{minipage} & \\ \hline
\end{tabular}
\end{center}
\medskip
The following is our main result on deorientations.
\begin{theorem}\label{3sdohard}
For every $\ell \geq 3$, it is NP-hard to decide whether a given digraph has a deorientation with at most $k$ edges that is $\ell$-strong where $k$ is part of the input.
\end{theorem}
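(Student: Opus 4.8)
The plan is to prove the case $\ell = 3$ only; the cases $\ell \geq 4$ follow by a routine padding argument (add a clique of $\ell - 3$ vertices joined to everything by digons, which contributes nothing to the optimal number of deorientations since digons are already symmetric and high-connectivity dummies never need deorienting). For $\ell = 3$ I would reduce from S3BMAX2SAT, which is NP-hard by Proposition \ref{thom}. Given an instance $(X,\mathcal{C},\ell)$ where each variable $x$ occurs exactly twice positively and once negatively, I build a digraph $D$ together with an integer $k$ so that $D$ has a deorientation with at most $k$ edges that is $3$-strong if and only if at least $\ell$ clauses of $\mathcal{C}$ can be simultaneously satisfied.

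\textbf{The construction.} The backbone is a large set $S$ of ``terminal'' vertices pairwise joined by digons; since digons cost nothing and any two vertices of $S$ already have $|S|-2 \geq 3$ internally disjoint paths through the rest of $S$, the mixed graph will automatically be $3$-strong in $S$, and by repeated application of Proposition \ref{connin} it suffices to certify, for each non-terminal vertex $v$, three internally-disjoint $v$-$S$ paths and three internally-disjoint $S$-$v$ paths. For each variable $x$ I introduce a small ``variable gadget'' with three designated arcs, one for each occurrence of $x$ (two positive, one negative), together with auxiliary vertices hanging off $S$ by digons; the gadget will be riggable so that deorienting a fixed small number of its arcs is forced, and the cheap ways to pay this cost come in exactly two ``shapes,'' one encoding $x = \mathrm{TRUE}$ and one encoding $x = \mathrm{FALSE}$, with the property that a satisfied literal of $x$ makes a particular occurrence-arc already deoriented (or avoids the need to deorient it). For each clause $C$ I introduce a ``clause gadget'' containing a vertex $v_C$ whose in-neighbourhood or out-neighbourhood in $D$ has size $3$, so that $3$-strength forces an extra edge across the cut $\{v_C\}$ (or a slightly larger prescribed cut) in the deorientation; this extra edge can be obtained for free precisely when one of the two literals of $C$ is satisfied, and otherwise costs one additional deorientation. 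Setting $k$ equal to the forced variable-gadget cost plus $|\mathcal{C}| - \ell$ (the number of clauses we are allowed to leave unsatisfied) then makes the budget tight, so a valid deorientation of size $\le k$ exists iff the truth assignment read off from the variable gadgets satisfies at least $\ell$ clauses.

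\textbf{The two directions.} For the ``if'' direction, given a satisfying-$\ell$ assignment $\Phi$, I deorient in each variable gadget the arcs dictated by the shape corresponding to $\Phi(x)$, and in each \emph{unsatisfied} clause gadget I deorient the one extra arc across its special cut; then I verify $3$-strength by exhibiting, for every non-terminal vertex, the required three disjoint paths to and from $S$, and invoke Proposition \ref{connin} iteratively (terminals first, then variable-gadget vertices, then clause-gadget vertices $v_C$). For the ``only if'' direction, given a deorientation $\tilde D$ with at most $k$ arcs that is $3$-strong, I first argue that each variable gadget must absorb at least its forced cost --- this is a local cut argument of the same flavour as Lemma \ref{tip}: certain vertex sets have underlying-graph degree $3$, so in a $3$-strong mixed graph they need an undirected edge or a symmetric arc pair on the boundary, which forces deorientations inside the gadget. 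Because the gadget costs are disjoint and already sum to $k - (|\mathcal{C}| - \ell)$, at most $|\mathcal{C}| - \ell$ further deorientations are available, each clause gadget needs its extra edge unless one of its literals is ``satisfied'' by the shape chosen in the adjacent variable gadget, and the shapes are forced to be globally consistent (one shape per variable) by the tightness of the budget; reading off $\Phi$ from the shapes then yields an assignment satisfying all but at most $|\mathcal{C}| - \ell$ clauses.

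\textbf{Main obstacle.} The delicate part is engineering the variable gadget so that (a) its forced deorientation cost is \emph{exactly} attained only by the two intended ``TRUE'' and ``FALSE'' shapes — no third cheap configuration and no way to ``half-satisfy'' two occurrences at once — and (b) the interaction with the three clause gadgets touching $x$ is faithful: a positive occurrence is helped precisely in the TRUE shape and the negative occurrence precisely in the FALSE shape. Balancing these constraints while keeping every relevant cut of underlying-degree exactly $3$ (so that the $3$-strength forcing arguments bite cleanly, as in the proof of Lemma \ref{tip}) is where the real work lies; the path-exhibition needed for the ``if'' direction and the cut-counting for the ``only if'' direction are then routine but lengthy bookkeeping built on Proposition \ref{connin}.
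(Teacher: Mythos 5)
Your plan coincides, step for step, with the strategy the paper actually uses: the padding argument with $\ell-3$ digon-joined dummies for $\ell\geq 4$, a reduction from S3BMAX2SAT, a terminal set $S$ joined pairwise by digons together with iterated use of Proposition \ref{connin} for the sufficiency direction, clause vertices $v_C$ with a single private entering arc $s_Cv_C$ so that an unsatisfied clause costs one extra deorientation, a budget of the form (forced variable cost) $+\,|\mathcal{C}|-\ell$ (the paper takes $6|X|+|\mathcal{C}|-\ell$), and local cut/degree counting for the necessity direction. So the route is the right one.

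However, as a proof the proposal has a genuine gap, and you name it yourself: the variable gadget is never constructed, and that construction \emph{is} the technical content of the theorem. Everything else (the padding, the role of $S$, the clause accounting, the path exhibition via Proposition \ref{connin}) is indeed routine once the gadget exists, but without an explicit gadget there is nothing to which the cut-forcing claims can be applied, and it is not evident a priori that a gadget with your properties exists at all. The paper realizes it with twelve non-terminal vertices per variable ($p_{(x,C_i)},q_{(x,C_i)}$ for the three incident clauses, $p_x,q_x$, and $w_x^1,\dots,w_x^4$), arranged so that four ``out of $w_x^i$'' cuts and a chain of two-element cuts force six deorientations per variable. One further point worth noting: your condition (a), that the forced cost be attained \emph{only} by two canonical TRUE/FALSE shapes, is stronger than what is needed and would be harder to engineer. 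The paper's necessity argument (Claim \ref{princ}) only establishes the weaker statement that one can select six forced arcs per gadget which cannot simultaneously account for the arc entering $v_{C_2}$ (the negated occurrence) and the arcs entering $v_{C_1},v_{C_3}$ (the positive occurrences); the assignment is then read off from whether $q_{(x,C_2)}v_{C_2}\in F$, with no global ``exactly two shapes'' claim. Aiming for this weaker property is what makes the gadget design feasible, so if you pursue your plan you should target it rather than (a).
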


\begin{proof}
Observe that for a given integer $\ell \geq 4$, an instance $(D,k)$ of 3SDO is positive if and only if the graph obtained from adding $\ell-3$ new vertices and linking them by digons to all vertices of $D$ and to each other can be made $\ell$-strong by deorienting at most $k$ arcs. Hence, it suffices to prove the statement for 3SDO. We show this by describing a polynomial reduction from S3BMAX2SAT. 

Let $(X,\mathcal{C},\ell)$ be an instance of S3BMAX2SAT. We now create an instance $(D=(V,A),k)$ of 3SDO. For $x \in X$ and $C \in \mathcal{C}$, we say that $(x,C)$ is an {\bf incident pair} if $x \in C$ or $\bar{x}\in C$ and we let $\Gamma$ denote the set of incident pairs.

For every incident pair $(x,C)$, we let $D$ contain 3 vertices $p_{(x,C)},q_{(x,C)}$, and $s_{(x,C)}$ and a digon linking every pair of these vertices.

Now, for every $x \in X$, we let $D$ contain 10 more vertices $p_x,q_x,s^1_x,s^2_x,s_x^3,s_x^4,w^1_x,w^2_x,w^3_x$, and $w^4_x$. We add a digon between any pair of vertices in $\{p_x,q_x,s^3_x\}$ . Next, for every $C \in \mathcal{C}$, we let $D$ contain two more vertices $v_C$ and $s_C$ and we add an arc from $s_C$ to $v_C$.


Now, for every $x \in X$, let $C_1,\ldots,C_3$ be an ordering of the clauses containing $x$ or $\bar{x}$ such that $x \in C_1,C_3$ and $\bar{x}\in C_2$.\\ Let $V_x=\{p_{(x,C_1)},q_{(x,C_1)},p_{(x,C_2)},q_{(x,C_2)},p_{(x,C_3)},q_{(x,C_3)},p_x,q_x,w^1_x,w^2_x,w^3_x,w^4_x\}$. We add a digon between $w^{i}_x$ and $s^{j}_x$ for $i=1,\ldots,4$ and $j=1,2$.\\ Further, we add the following arcs: $q_{(x,C_1)}w_x^1,p_{(x,C_2)}w_x^1, q_{(x,C_2)}w_x^2,p_{(x,C_3)}w_x^2,q_{(x,C_3)}w_x^3,p_{x}w_x^3,q_{x}w_x^4,p_{(x,C_1)}w_x^4,\\ p_{(x,C_1)}v_{C_1}, v_{C_1}q_{(x,C_1)},q_{(x,C_2)}v_{C_2}, v_{C_2}p_{(x,C_2)},p_{(x,C_3)}v_{C_3}, v_{C_3}q_{(x,C_3)},q_{x}s_x^4, s_x^4p_{x}$.

Further, we set $S=\bigcup_{x \in X}\{s_x^1,\ldots,s_x^4\}\cup \bigcup_{(x,C)\in \Gamma}s_{(x,C)}\cup \bigcup_{C \in \mathcal{C}}s_C$ and we let $D$ contain a digon between any pair of vertices of $S$. Finally, we set $k=6|X|+|\mathcal{C}|-\ell$. An illustration for the part of the graph associated to some fixed $x \in X$ can be found in Figure \ref{deor1}.
\begin{figure}[H]
    \centering
        \includegraphics[width=.6\textwidth]{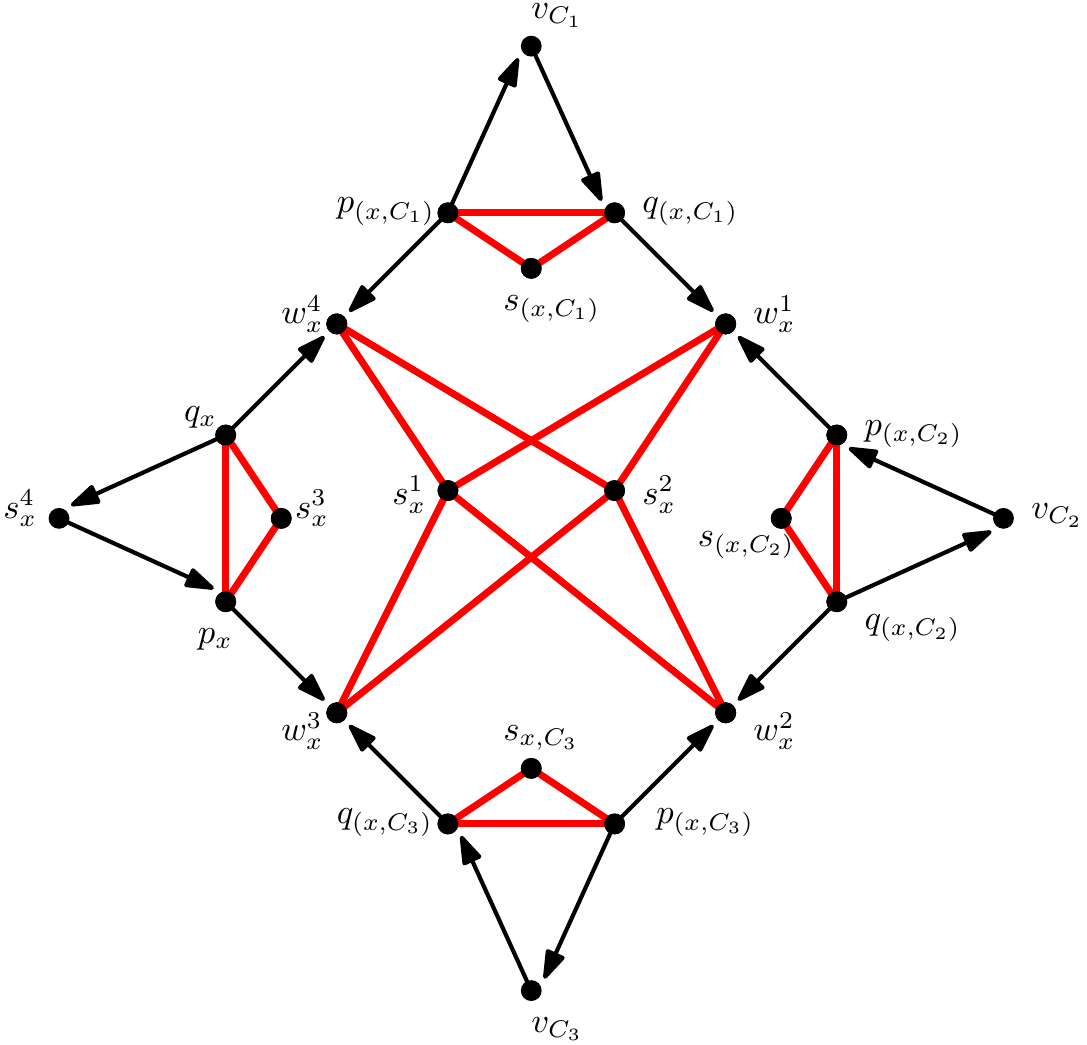}
        \caption{An example for $D[V_x\cup \{s_1^x,\ldots,s_4^x,v_{C_1},\ldots,v_{C_3}\}]$ for some $x \in X$. The thick red edges indicate digons. The digons linking pairs of vertices in $S$ have been omitted due to space restrictions.}\label{deor1}
\end{figure}
This finishes the description of $(D=(V,A),k)$. An illustration for a small instance of S3BMAX2SAT can be found in Figure \ref{deor4}.

\begin{figure}[H]
    \centering
        \includegraphics[width=\textwidth]{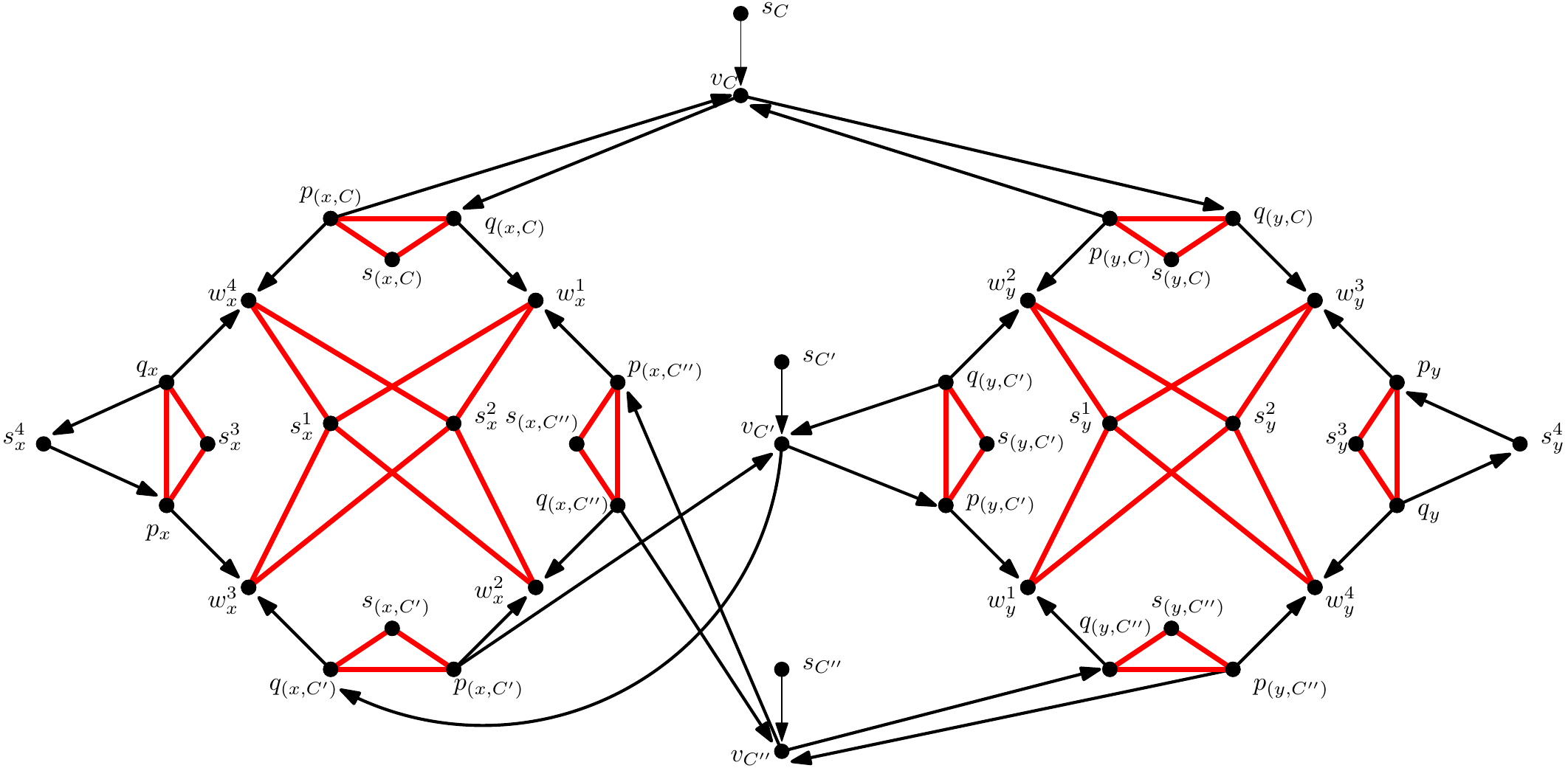}
        \caption{An example for the construction of $D$ where $X=\{x,y\}$ and $\mathcal{C}=\{C=\{x,y\},C'=\{x,\bar{y}\},C''=\{\bar{x},y\}\}$. In the construction the ordering $C,C'',C'$ of the clauses containing $x$ or $\bar{x}$ and the ordering $C'',C',C$ of the clauses containing $y$ or $\bar{y}$ are used. Again, the thick red edges indicate digons and the digons linking pairs of vertices in $S$ have been omitted due to space restrictions.}\label{deor4}
\end{figure}

We show in the following that $(D,k)$ is a positive instance of 3SDO if and only if $(X,\mathcal{C},\ell)$ is a positive instance of S3BMAX2SAT.

First suppose that $(X,\mathcal{C},\ell)$ is a positive instance of S3BMAX2SAT, so there is an  assignment $\phi:X\rightarrow\{TRUE,FALSE\}$ that satisfies at least $\ell$ clauses of $\mathcal{C}$. Let $\mathcal{C}'$ be the set of clauses in $\mathcal{C}$ which are satisfied by $\phi$ and $\mathcal{C}''=\mathcal{C}-\mathcal{C}'$. Let $F_1 \subseteq A$ be the set containing the following arcs:
\begin{itemize} 
\item the arcs $q_{(x,C_1)}w_x^1,q_{(x,C_2)}w_x^2,q_{(x,C_3)}w_x^3,q_{x}w_x^4,p_{(x,C_1)}v_{C_1}$ and $p_{(x,C_3)}v_{C_3}$ for all $x \in X$ with $\phi(x)=TRUE$ and 
\item the arcs $p_{(x,C_1)}w_x^4,p_{(x,C_2)}w_x^1,p_{(x,C_3)}w_x^2,p_xw_x^3,q_{(x,C_2)}v_{C_2}$ and $q_xs_x^4$ for all $x \in X$ with $\phi(x)=FALSE$.
\end{itemize}
An illustration can be found in Figure \ref{deor2}.
\begin{figure}[H]
  \includegraphics[width=.53 \linewidth]{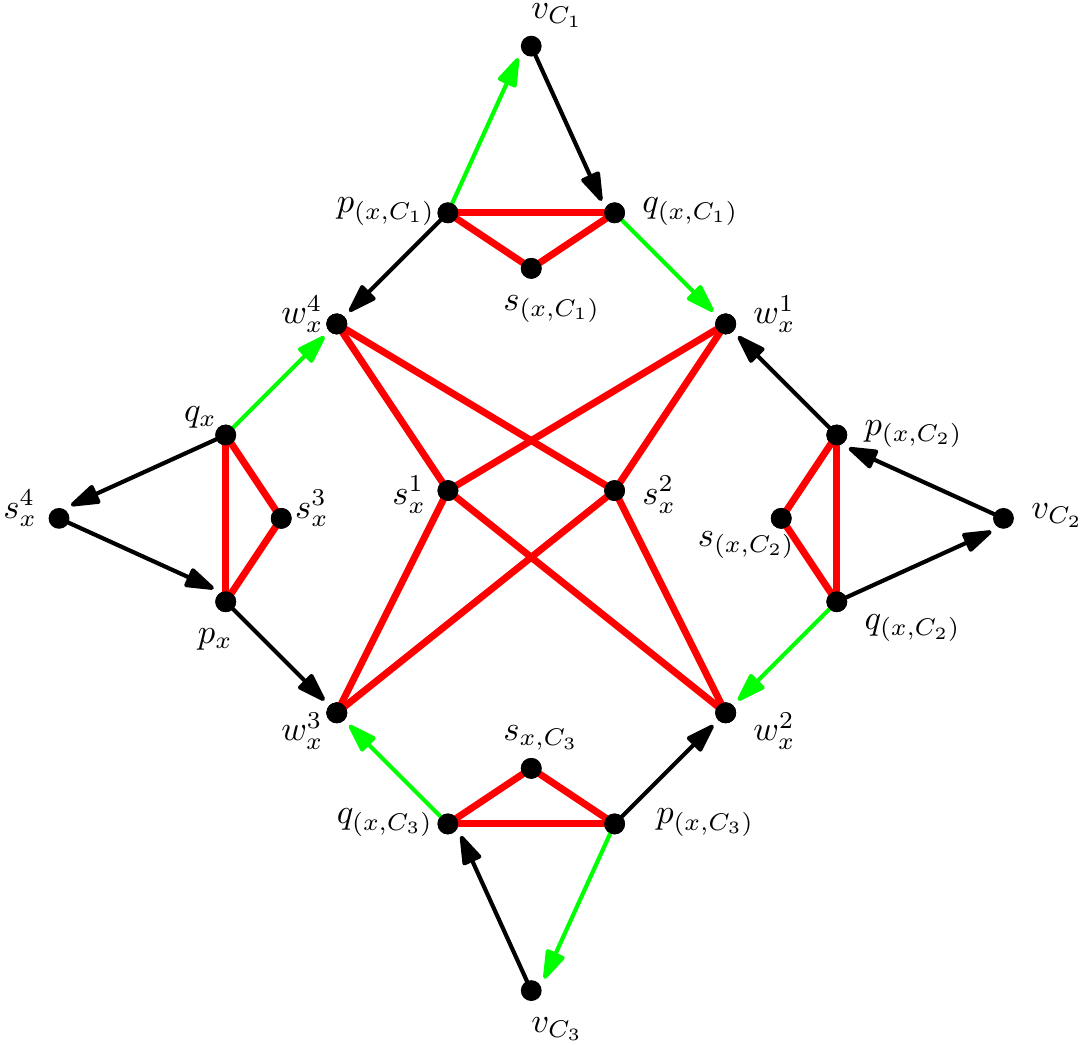}  \hskip 0.1truecm \includegraphics[width=.53 \linewidth]{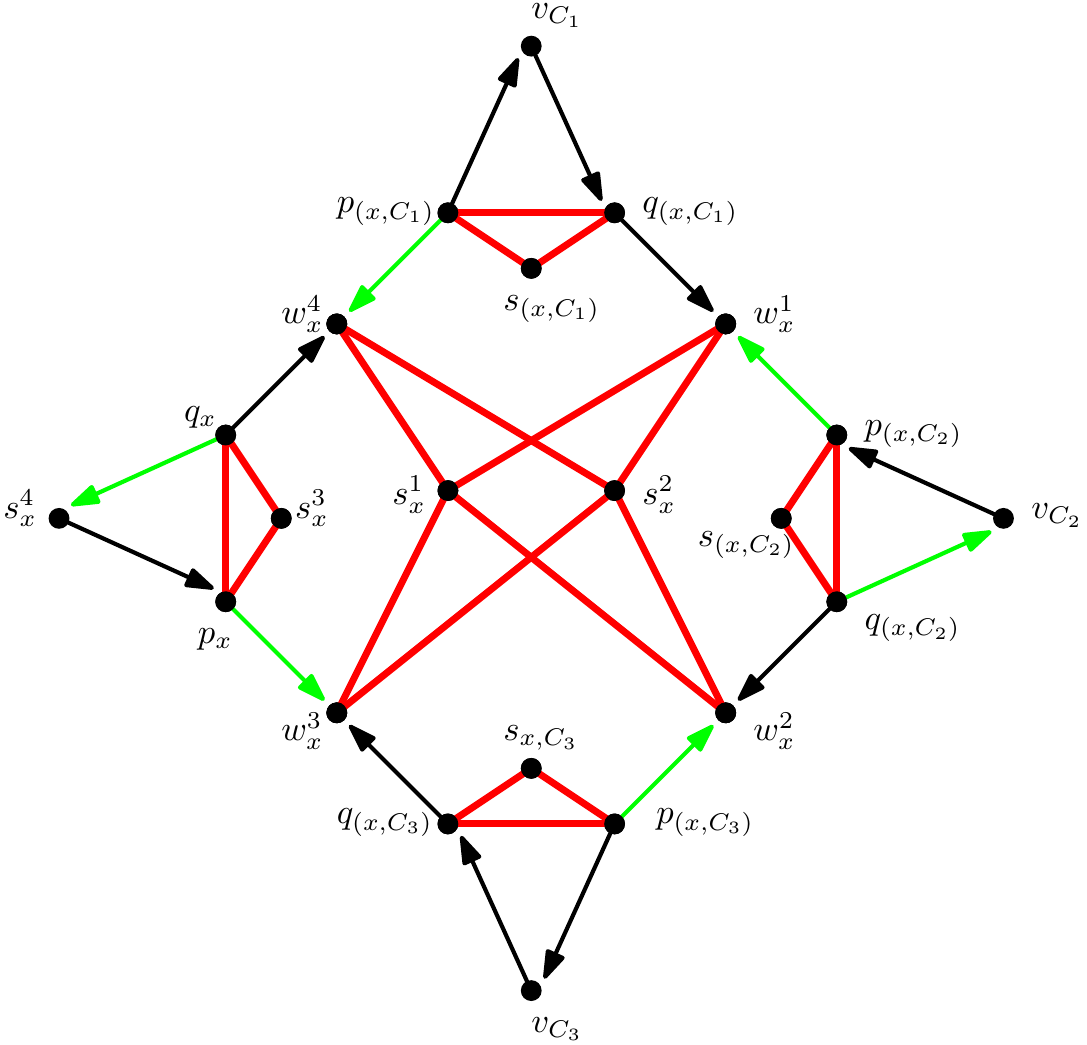}
  \caption{An illustration of the definition of $F_1$. The arcs contained in $F_1$ if $\phi(x)=TRUE$ are depicted in green in the left drawing and the arcs contained in $F_1$ if $\phi(x)=FALSE$ are depicted in green in the right drawing. Again, the thick red edges indicate digons and the digons linking pairs of vertices in $S$ have been omitted due to space restrictions.}\label{deor2}
\end{figure}Observe that $| F_1|=6|X|$ and that for some $C \in \mathcal{C}$, we have that $F_1$ contains an arc entering $v_C$ if and only if $C \in \mathcal{C'}$. Now we create a set $F_2\subseteq A$ that contains the arc $s_Cv_C$ for every $C \in \mathcal{C}''$. Let $F=F_1 \cup F_2$ and observe that $|F|=|F_1|+|F_2|=6|X|+|\mathcal{C}''|=6|X|+|\mathcal{C}|-|\mathcal{C}'|\leq 6|X|+|\mathcal{C}|-\ell=k$. Let $M$ be the mixed graph obtained from $D$ by deorienting the arcs in $F$. We show in the following that $M$ is 3-strong.


Observe that $M$ is clearly $3$-strong in $S$. Now consider some $x \in X$. Observe that $M$ contains the paths $s_x^{j}w_x^{1}$ for $j=1,2$ and the path $s_{(x,C_1)}q_{(x,C_1)}w_x^{1}$. Next, $M$ contains the paths $w_x^{1}s_x^{j}$ for $j=1,2$. Finally, if $\phi(x)=TRUE$, then $M$ contains the path $w_x^{1}q_{(x,C_1)}s_{(x,C_1)}$ and if $\phi(x)=FALSE$, then $M$ contains the path $w_x^{1}p_{(x,C_2)}s_{(x,C_2)}$. We obtain by Proposition \ref{connin} that $M$ is 3-strong in $S\cup w_x^1$. Similar arguments show that $M$ is 3-strong in $S'=S \cup \{w_x^{1},\ldots,w_x^4\}$.

Next observe that $M$ contains the paths $p_{(x,C_1)}s_{(x,C_1)},p_{(x,C_1)}w_x^4$ and $p_{(x,C_1)}q_{(x,C_1)}w_x^1$. Next, $M$ contains the path $s_{(x,C_1)}p_{(x,C_1)}$. Further, if $\phi(x)=TRUE$, then $M$ contains the paths $s_{C_1}v_{C_1}p_{(x,C_1)}$ and $w_x^1q_{(x,C_1)}p_{(x,C_1)}$ and if $\phi(x)=FALSE$, then $M$ contains the paths $w_x^4p_{(x,C_1)}$ and $v_{C_1}q_{(x,C_1)}p_{(x,C_1)}$. We obtain by Proposition \ref{connin} that $M$ is 3-strong in $S\cup p_{(x,C_1)}$. Similar arguments show that $M$ is 3-strong in $S \cup V_x$. As $x$ was chosen arbitrarily, we obtain that $M$ is 3-strong in $S''=S \cup \bigcup_{x \in X}V_x$.


Now consider some $C \in \mathcal{C}$ and let $x,x'$ be the variables such that $(x,C)$ and $(x',C)$ are incident pairs. We show that $M$ is 3-strong in $S''\cup v_C$. First observe that $M$ contains the path $s_Cv_C$ and paths of length 1 from $V_x$ and $V_{x'}$ to $v_C$. Next, observe that $M$ contains arcs $a_x,a_{x'}$ from $v_C$ to $V_x$ and $V_{x'}$, respectively. Further, if $C \in \mathcal{C}''$, then $M$ contains the edge $v_Cs_C$. Otherwise, we have $C \in \mathcal{C}'$, so $C$ is satisfied by one of $x$ and $x'$, say $x$. Then $M$ contains an edge linking $V_x$ and $v_C$ and the second endvertex of this edge in $V_x$ is distinct from the head of $a_x$. We obtain by Proposition \ref{connin} that $M$ is 3-strong.

As $M$ is obtained from $D$ by deorienting at most $k$ arcs, we obtain that $(D,k)$ is a positive instance of 3SDO.

\medskip

Now suppose that $(D,k)$ is a positive instance of 3SDO, so there is a set $F\subseteq A$ with $|F|\leq k$ such that the mixed graph $M$ that is obtained from $D$ by deorienting the arcs in $F$ is 3-strong.

\begin{claim}\label{p1}
Let $x \in X$ and let $C_1,C_2,C_3$ be the clauses containing $x$ or $\bar{x}$ in the ordering used in the construction. Then either $p_{(x,C_1)}w_x^4\in F$ or $\{q_{(x,C_1)}w_x^1, p_{(x,C_1)}v_{C_1}\}\subseteq F$.
\end{claim}
\begin{proof}
As $M$ is 3-strong, there is an edge entering $\{p_{(x,C_1)},q_{(x,C_1)}\}$ in $M-\{s_{(x,C_1)},v_{C_1}\}$, so $F$ contains one of the arcs $p_{(x,C_1)}w_x^4$ and $q_{(x,C_1)}w_x^1$. As $M$ is 3-strong, there is an edge entering $p_{(x,C_1)}$ in $M-\{s_{(x,C_1)},q_{(x,C_1)}\}$, so $F$ contains one of the arcs $p_{(x,C_1)}w_x^4$ and $p_{(x,C_1)}v_{C_1}$. 
\end{proof}

Similarly, we can prove the following claims:
\begin{claim}\label{p2}
Let $x \in X$ and let  $C_1,C_2,C_3$ be the clauses containing $x$ or $\bar{x}$ in the ordering used in the construction. Then either $q_{(x,C_2)}w_x^2\in F$ or $\{p_{(x,C_2)}w_x^1, q_{(x,C_2)}v_{C_2}\}\subseteq F$.
\end{claim}

\begin{claim}\label{p3}
Let $x \in X$ and let $C_1,C_2,C_3$ be the clauses containing $x$ or $\bar{x}$ in the ordering used in the construction. Then either $p_{(x,C_3)}w_x^2\in F$ or $\{q_{(x,C_3)}w_x^3, p_{(x,C_3)}v_{C_3}\}\subseteq F$.
\end{claim}

\begin{claim}\label{p4}
Let $x \in X$ and let  $C_1,C_2,C_3$ be the clauses containing $x$ or $\bar{x}$ in the ordering used in the construction. Then either $q_{x}w_x^4\in F$ or $\{p_{x}w_x^3, q_xs_x^4\}\subseteq F$.
\end{claim}
We further need the following simple observation.
\begin{claim}\label{w}
Let $x \in X$ and let  $C_1,C_2,C_3$  be the clauses containing $x$ or $\bar{x}$ in the ordering used in the construction. Then at least one of the arcs $q_{(x,C_1)}w_x^1$ and $p_{(x,C_2)}w_x^1$, at least one of the arcs $q_{(x,C_2)}w_x^2$ and $p_{(x,C_3)}w_x^2$, at least one of the arcs $q_{(x,C_3)}w_x^3$ and $p_{x}w_x^3$ and at least one of the arcs $q_{x}w_x^4$ and $p_{(x,C_1)}w_x^4$ is contained in $F$. 
\end{claim}
\begin{proof}
As $M$ is 3-strong, for $i=1,\ldots,4$, there is an edge leaving $w_x^{i}$ in $M-\{s_x^1,s_x^2\}$.
\end{proof}


The following intermediate result is crucial for defining a truth assignment.
\begin{claim}\label{princ}
For every $x \in X$, there is a set $F_x \subseteq F \cap (A(D[V_x])\cup \delta^+_D(V_x))$ with $|F_x|=6$ such that either $q_{(x,C_2)}v_{C_2}\notin F_x$ or $\{p_{(x,C_1)}v_{C_1},p_{(x,C_3)}v_{C_3}\}\cap F=\emptyset$.
\end{claim}
\begin{proof}
Let $B_x$ be the set of arcs in $F$ which are incident to $w_x^{i}$ for some $i \in \{1,\ldots,4\}$. By Claim \ref{w}, we obtain that $|B_x|\geq 4$. If $|B_x|\geq 6$, the statement trivially follows.

If $|B_x|=5$, observe that by Claim \ref{w}, $F$ contains an arc incident to $w_x^{i}$ for $i \in \{1,\ldots,4\}$. Hence at least one of the following arcs is not contained in $B_x: p_{(x,C_1)}w_x^4,q_xw_x^4, p_{(x,C_3)}w_x^2, q_{(x,C_2)} w_x^{2}$.
If $p_{(x,C_1)}w_x^4\notin B_x,$ then Claim \ref{p1} yields  $p_{(x,C_1)}v_{C_1}\in F$, hence $F_x=B_x\cup p_{(x,C_1)}v_{C_1}$ has the desired properties. If $q_xw_x^4\notin B_x,$ then Claim \ref{p4} yields  $q_xs_x^4 \in F$, hence $F_x=B_x\cup q_xs_x^4$ has the desired properties. If $ p_{(x,C_3)}w_x^2\notin B_x,$ then Claim \ref{p3} yields  $p_{(x,C_3)}v_{C_3}\in F$, hence $F_x=B_x\cup p_{(x,C_3)}v_{C_3}$ has the desired properties. If $ q_{(x,C_2)} w_x^{2}\notin B_x,$ then Claim \ref{p2} yields  $q_{(x,C_2)}v_{C_2}\in F$, hence $F_x=B_x\cup q_{(x,C_2)}v_{C_2}$ has the desired properties.


Now suppose that $|B_x|=4$.  By Claims \ref{p1} to \ref{w}, we obtain that either we have $B_x=\{p_{(x,C_1)}w_x^4,p_{(x,C_2)}w_x^1,p_{(x,C_3)}w_x^2,p_xw_x^3\}$ or $B_x=\{q_{(x,C_1)}w_x^1,q_{(x,C_2)}w_x^2,q_{(x,C_3)}w_x^3,q_xw_x^4\}$.\\
If $B_x=\{p_{(x,C_1)}w_x^4,p_{(x,C_2)}w_x^1,p_{(x,C_3)}w_x^2,p_xw_x^3\}$, then it follows from Claims \ref{p2} and \ref{p4} that $\{q_{(x,C_2)}v_{C_2},q_xs_x^2\}\subseteq F$. Hence $F_x=B_x\cup \{q_{(x,C_2)}v_{C_2},q_xs_x^2\}$ has the desired properties.
If $B_x=\{q_{(x,C_1)}w_x^1,q_{(x,C_2)}w_x^2,q_{(x,C_3)}w_x^3,q_xw_x^4\}$, then Claims \ref{p1} and \ref{p3} yield $\{ p_{(x,C_1)}v_{C_1}, p_{(x,C_3)}v_{C_3}\}\subseteq F$. Hence $F_x=B_x\cup \{ p_{(x,C_1)}v_{C_1}, p_{(x,C_3)}v_{C_3}\}$ has the desired properties.
\end{proof}

Observe that the sets $F_x$ which exist by Claim \ref{princ} are not necessarily unique, however this ambiguity will not have any effect. Let $F_1=\bigcup_{x \in X}F_x$ and $F_2=F-F_1$. As the sets $F_x$ are pairwise disjoint, we have $|F_1|=6|X|$.

We now define a truth assignment $\phi:X \rightarrow \{TRUE,FALSE\}$ in the following way: We set $\phi(x)=TRUE$ if $q_{(x,C_2)}v_{C_2}\notin F$ and $\phi(x)=FALSE$ otherwise. We let $\mathcal{C}'$ be the sets of clauses in $\mathcal{C}$ such that an arc of $F_1$ enters $v_C$ and let $\mathcal{C}''=\mathcal{C}-\mathcal{C}'$. Observe that by construction, we have that $C$ is satisfied by $\phi$ for all $C \in \mathcal{C}'$. Further, observe that, as $M$ is 3-strong, for every $C \in \mathcal{C}''$, $F$ contains an arc $a_C$ entering $v_C$. By definition of $\mathcal{C}''$, we have $a_C \in F_2$. This yields $|\mathcal{C}''|\leq |F_2|$. We obtain $|\mathcal{C}'|=|\mathcal{C}|-|\mathcal{C}''|\geq |\mathcal{C}|-|F_2|=|\mathcal{C}|-|F|+|F_1|\geq |\mathcal{C}|-k+6|X|=|\mathcal{C}|-(6|X|+|\mathcal{C}|-\ell)+6|X|=\ell$. As $\phi$ satisfies all clauses in $\mathcal{C}'$, we obtain that $(X,\mathcal{C},\ell)$ is a positive instance of S3BMAX2SAT.

This finishes the proof.
\end{proof}
Observe that there are two canonical optimization problems associated to 3SDO. Firstly, we can minimize the number of arcs we deorient and secondly, we can maximize the number of arcs we do not deorient. We wish to remark that APX-hardness results for both these optimization problems can be obtained by combining the reduction proving Theorem \ref{3sdohard} with the corresponding result in \cite{bk}.

\subsection{Deorienting to increase in- and out-degrees}\label{degrees}
We here prove a result on deorienting arcs to satisfy certain degree conditions. 
I deleted the above definition as the same notation was used for two different things.
\begin{proposition}
  There exists a polynomial algorithm for the following problem: given a digraph $D=(V,A)$ and a natural number $k$; find
  a minimum subset of arcs in $A$ whose deorientation leads to a mixed graph $M$ in which $\min\{d_M^+(v)+d_M(v),d_M^-(v)+d_M(v)\}\geq k$ holds for all $v \in V$.
\end{proposition}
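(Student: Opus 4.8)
The plan is to reduce the problem to a minimum-cost flow (equivalently, a $b$-matching / transportation) computation, which is solvable in polynomial time. First I would reformulate the deorientation operation as a degree-change: deorienting an arc $a = uv$ leaves $d^+(v), d^-(u)$ unchanged but increases $d_M(u)$ and $d_M(v)$ each by one, while removing one unit from $d^+(u)$ and one unit from $d^-(v)$ (in the bookkeeping where the edge contributes to the undirected degree $d_M$ of both endpoints). What matters for the two quantities $d_M^+(v)+d_M(v)$ and $d_M^-(v)+d_M(v)$ is that deorienting $a=uv$ increases $d_M^-(u)+d_M(u)$ by one (the edge adds to $d_M(u)$, nothing is subtracted from $d^-(u)$) and increases $d_M^+(v)+d_M(v)$ by one, while leaving $d_M^+(u)+d_M(u)$ and $d_M^-(v)+d_M(v)$ unchanged (the arc leaving $u$ is replaced by an edge at $u$, so the "out-capacity" of $u$ is not improved; symmetrically at $v$). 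So each vertex $v$ has an \emph{out-deficiency} $f^+(v) = \max\{0, k - d_D^+(v)\}$ and an \emph{in-deficiency} $f^-(v) = \max\{0, k - d_D^-(v)\}$, and deorienting the arc $uv$ reduces $f^-(u)$ by at most one and $f^+(v)$ by at most one; deorienting never helps any other deficiency.

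Next I would set up the following bipartite covering problem. Create, for each vertex $v$, a "supply" node on the left with capacity $f^-(v)$ and a "supply" node on the right with capacity $f^+(v)$; for each arc $a = uv \in A$ create an element that, if selected (deoriented), covers one unit at the left node of $u$ and one unit at the right node of $v$. We want a minimum-cardinality set of arcs covering \emph{all} deficiency units, i.e. selecting, for each vertex $u$, at least $f^-(u)$ arcs with tail $u$, and for each vertex $v$ at least $f^+(v)$ arcs with head $v$. This is exactly a \emph{degree-constrained subgraph} (lower-bound only) problem on the bipartite "incidence" graph whose nodes are two copies of $V$ and whose edges are the arcs of $D$, and it can be solved as a minimum-cost flow: add a source $s$ with an arc to each left node $u$ of lower and upper capacity $f^-(u)$, each arc $a=uv$ of $D$ becomes a unit-capacity edge from left-$u$ to right-$v$ with cost $1$, each right node $v$ has an arc to a sink $t$ with lower and upper capacity $f^+(v)$; then a feasible integral flow corresponds to a set of arcs to deorient and the cost is its cardinality. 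Feasibility of this flow problem is exactly the condition that a valid deorientation exists at all (by Hall/Gale--Hoffman type conditions), and a minimum-cost feasible flow, found in polynomial time by standard algorithms (see e.g. the min-cost flow references already cited), yields a minimum set. I should double-check that deorienting the \emph{same} arc cannot be "used twice" — it can't, since an arc has one tail and one head — so unit capacities on the arc-edges are correct, and there is no interaction that would make a fractional optimum beat the integral one (the constraint matrix is an incidence matrix of a bipartite graph, hence totally unimodular).

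The main obstacle, and the step I would be most careful about, is getting the degree bookkeeping exactly right: one must verify that deorienting an arc genuinely helps \emph{only} the in-side of its tail and the out-side of its head for the two quantities in the statement, and never hurts anything. Concretely, after deorienting $uv$: at $u$, $d^+$ drops by $1$ but $d_M$ rises by $1$, so $d_M^+(u)+d_M(u)$ is unchanged, while $d_M^-(u)+d_M(u)$ rises by $1$; symmetrically at $v$. Hence the problem decouples into the bipartite covering above with \emph{no conflicts}, and there is no reason to ever deorient an arc that is not needed to fix a deficiency. Once this is pinned down, correctness of the reduction and polynomial solvability are routine. I would also remark that the problem can be solved in the more general form where $k$ is replaced by vertex-dependent lower bounds, since the flow formulation handles that verbatim.
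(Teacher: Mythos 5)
Your degree bookkeeping is exactly right (deorienting $uv$ raises $d_M^-(u)+d_M(u)$ and $d_M^+(v)+d_M(v)$ by one and leaves the other two quantities at $u,v$ unchanged), and the reformulation as a lower-bounded degree-constrained subgraph problem on the bipartite incidence structure, solved by minimum-cost flow, is in the same spirit as the paper's proof. However, the flow network you actually specify has a step that fails: you put lower \emph{and} upper capacity $f^-(u)$ on the arc $su$ and lower and upper capacity $f^+(v)$ on the arc $vt$. This forces every vertex to be covered \emph{exactly} to its deficiency, whereas the requirement is only ``at least''; in particular any feasible flow must have value $\sum_u f^-(u)=\sum_v f^+(v)$, and these sums are in general different. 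Concretely, take $k=1$ and $D$ with vertices $u,v,w$ and arcs $uv,uw$: then $f^-(u)=1$, $f^+(v)=f^+(w)=1$ and all other deficiencies are $0$; the instance is solved (optimally) by deorienting both arcs, which over-covers the in-side of $u$, but your network has total supply $1$ and total demand $2$ and is therefore infeasible. So your claim that feasibility of the flow problem coincides with feasibility of the deorientation problem is false as stated, and the exact-degree formulation could also cut off optimal solutions even when it happens to be feasible.

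The repair is immediate: keep the lower bounds $f^-(u)$ and $f^+(v)$ but raise the upper capacities on the source and sink arcs (e.g.\ to $d_D^+(u)$ and $d_D^-(v)$, or to $k|V|$), so that the constraints are genuinely ``at least''; then a minimum-cost feasible integral flow (which exists and is computable in polynomial time by standard algorithms, integrality being automatic) corresponds exactly to a minimum set of arcs to deorient. With this fix your argument is essentially the paper's: the paper also builds a network on two copies of $V$ plus $s$ and $t$, but instead of precomputing deficiencies it keeps cost-$0$ arcs $u_1v_2$ for every arc $uv$ (the contributions the existing orientation already provides) together with cost-$1$ arcs $v_1u_2$ encoding potential deorientations, and imposes lower bound $k$ and capacity $k|V|$ on all source and sink arcs; your deficiency preprocessing is a harmless simplification of the same reduction once the capacities are set correctly.
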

\begin{proof}
Let $D=(V,A)$ and $k$ be given.
  
Form a flow network ${\cal N}(D)$  with vertex set  $V_1\cup V_2\cup\{s,t\}$, where for $i=1,2, V_i$ contains a copy $v_i$ of every $v \in V$,  
and arc-set $\{sv_1|v_1\in V_1\}\cup\{v_2t|v_2\in V_2\}\cup{} A_1\cup A_2$, where $A_1=\{u_1v_2|uv\in A\}$ and $A_2=\{v_1u_2|uv\in A\}$. For $i=1,2$, let all arcs of $A_1\cup A_2$ have capacity  1, lower bound 0 and cost $i-1$. Finally let all arcs starting in $s$ or ending in $t$ have lower bound $k$, capacity $k|V|$ and cost zero.
  Now it is easy to check that a feasible integer valued $(s,t)$-flow of cost $C$ in ${\cal N}(D)$ corresponds to a set of $C$ arcs from $A_2$ whose deorientation results in a mixed graph $M$ satisfying $\min\{d_M^+(v)+d_M(v),d_M^-(v)+d_M(v)\}\geq k$ for all $v \in V$, and conversely. Hence we can find a set of arcs with the desired property of minimum size  by
  finding a minimum cost feasible integer valued $(s,t)$-flow in ${\cal N}(D)$ which is well-known to be possible in polynomial time, see e.g. \cite[Chapter 4]{BG}.
  \end{proof}

\subsection{Deorienting to obtain specified local arc-connectivities}\label{local}

In this section, we deal with a problem concerning deorientation for local arc connectivities. The reduction establishes an interesting connection between deorientations and orientations. We need to consider the following orientation problem:

\medskip

\begin{tabular}{|ccc|}\hline
 & \begin{minipage}{14cm}
\vspace{2mm}

\noindent{}{\bf Local Connectivity Orientation (LCO)}
\vspace{2mm}

{\bf Input:} A graph $G$, a requirement function $r:V(G)\times V(G)\rightarrow \mathbb{Z}_{\geq 0}$.

\vspace{2mm}

{\bf Question:} Is there an orientation $\vec{G}$ of $G$ that satisfies $\lambda_{\vec{G}}(x,y)\geq r(x,y)$ for all $x,y \in V(G)$?
\medskip

\vspace{2mm}

\end{minipage} & \\ \hline
\end{tabular}
\medskip

We use the following result of Frank, Kir\'aly and Kir\'aly \cite{fkk}, see also \cite{h}:

\begin{proposition}\label{lcohard}\cite{fkk}
LCO is NP-hard.
\end{proposition}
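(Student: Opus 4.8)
The plan is to reduce from a satisfiability problem. Plain 3-SAT already suffices; if one also wants the requirement function to take only small values, a bounded-occurrence variant (such as those used earlier in this paper) can be used instead. Given a CNF formula $(X,\mathcal{C})$ with clauses of size three, I would build in polynomial time a graph $G$ together with a requirement function $r:V(G)\times V(G)\to\mathbb{Z}_{\ge 0}$ that is positive only on a small designated set of pairs, so that $G$ has an orientation $\vec{G}$ with $\lambda_{\vec{G}}(x,y)\ge r(x,y)$ for all $x,y$ if and only if $(X,\mathcal{C})$ is satisfiable. The conceptual point is that, in contrast with Theorem~\ref{thm:nashWthm}, where a global $k$-arc-connectivity requirement is governed by a single submodular condition, prescribing \emph{local} arc-connectivities lets one install an independent binary choice at each variable, and all of these choices must be made simultaneously and consistently with the clauses.

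First I would build a \textbf{variable gadget} $G_x$ for each $x\in X$. For each occurrence of $x$ in $\mathcal{C}$ it should have a ``true port'' and a ``false port'' (each port being an ordered pair of vertices, through which a clause route will later be threaded), plus enough internal edges and internal requirements so that $G_x$ has \emph{exactly} two orientations meeting its internal requirements: a ``true'' state, in which every true port points in its forward direction and every false port in its backward direction, and a ``false'' state, which is the reverse. A robust binary choice of this kind can be obtained from small pieces built from parallel edges and short even cycles, together with requirements of value $1$ and $2$ that are satisfiable in two symmetric ways; further connectivity requirements propagate the chosen value to all ports. Thus every admissible orientation of $G$ determines a truth assignment $\Phi$ of $X$.

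Next I would add a \textbf{clause gadget} for each clause $C=\ell_1\vee\ell_2\vee\ell_3$: a designated pair $(a_C,b_C)$ and, for $i=1,2,3$, an internal $a_C$--$b_C$ route passing through the port of the variable of $\ell_i$ that corresponds to the sign of $\ell_i$ in $C$, arranged so that this route can be oriented from $a_C$ to $b_C$ precisely when $\ell_i$ is true under $\Phi$. Setting $r(a_C,b_C)=1$ then encodes ``at least one literal of $C$ is true''. Conversely, from a satisfying assignment I would orient each $G_x$ in the state prescribed by $\Phi(x)$, send each clause gadget through the route of a satisfied literal, and orient the remaining padding edges arbitrarily; a routine check then confirms that every positive requirement is met, all other pairs having requirement $0$. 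The reduction is clearly polynomial.

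The hard part, and the real content of the Frank--Kir\'aly--Kir\'aly argument, is the gadget engineering. I would need to (i) design the variable gadget so that its internal local-connectivity requirements admit precisely the two intended orientations and nothing else; (ii) ensure that attaching several clause gadgets to one variable gadget creates no spurious directed short-cuts that would let a clause requirement be met without any of its literals being true; and (iii) check that none of the many \emph{unconstrained} cross-gadget pairs is accidentally forced into a high connectivity that would block an otherwise valid orientation. Pinning the set of admissible orientations down to exactly the intended menu is where essentially all of the difficulty lies; as in the reductions of Sections~\ref{sec:reverse} and~\ref{sec34}, this is typically handled by introducing high-degree hub vertices, parallel padding edges, and a careful cut-by-cut degree count.
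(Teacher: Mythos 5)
There is a genuine gap: what you have written is a plan for a reduction, not a proof. Note first that the paper itself does not prove Proposition~\ref{lcohard} at all; it is quoted from Frank, Kir\'aly and Kir\'aly \cite{fkk} (see also \cite{h}), so the entire burden of your blind attempt is to actually carry out the hardness argument — and that burden is exactly the part you defer. The two claims on which everything rests are only asserted: (a) that one can build a variable gadget from ``parallel edges and short even cycles'' with local requirements of value $1$ and $2$ whose admissible orientations are \emph{exactly} two symmetric states, with the chosen state propagated to all ports; and (b) that in the assembled graph a requirement $r(a_C,b_C)=1$ can be met \emph{only} via a route through a port of a true literal. Neither gadget is exhibited, and no cut or path analysis is given. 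Point (b) is particularly delicate: $r(a_C,b_C)=1$ only asks for some directed $a_C$--$b_C$ path in the orientation, and such a path may wander through other variable and clause gadgets; ruling out these spurious shortcuts (your items (ii) and (iii)) is precisely the content of the theorem, and you explicitly leave it open. As it stands, the argument could not be checked or completed without inventing the constructions yourself, so the proposal does not establish the proposition.

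A smaller but real concern: it is not even clear that the binary-choice gadget you posit exists in the unrestricted undirected setting with only local $\lambda$-requirements, since local arc-connectivity demands are rather coarse constraints (an orientation meeting them can often be perturbed on edges not lying in tight cuts), and forcing ``exactly two'' admissible orientations is a strong rigidity property. Any completed write-up would have to prove this rigidity, or else weaken it to ``every admissible orientation determines a consistent truth assignment'' and prove that weaker statement directly — again, none of which is done here.
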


Actually, we need the following slight strengthening of Proposition \ref{lcohard}.
\begin{proposition}\label{lcohard1}
LCO is NP-hard even for instances $(G,r)$ with $r(x,y)\geq 1$ for all all $x,y \in V(G)$.
\end{proposition}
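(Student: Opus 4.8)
\textbf{Proof plan for Proposition \ref{lcohard1}.}
The plan is to reduce from the general LCO problem (Proposition \ref{lcohard}) by a simple padding argument that forces every pair to acquire a positive requirement without changing the answer. Given an instance $(G,r)$ of LCO, I would first construct an auxiliary graph $G'$ obtained from $G$ by adding, for every ordered pair of distinct vertices $x,y\in V(G)$, a fixed number of parallel edges between $x$ and $y$; more uniformly, add a large multiplicity $N$ of parallel edges between every pair of vertices of $G$, where $N$ is chosen large enough (e.g. $N=2|V(G)|$ suffices). Equivalently one may add a single universal ``helper'' structure, but the cleanest choice is simply to make $G'$ contain $G$ plus $N$ copies of the complete graph on $V(G)$. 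Then define $r'$ on $V(G')=V(G)$ by $r'(x,y)=\max\{r(x,y),1\}$ for all $x,y$.

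The key steps are: (1) show that if $(G,r)$ has a feasible orientation $\vec G$, then extending it to $G'$ by orienting the $N$ copies of $K_{V(G)}$ as any Eulerian-type orientation (say, fix a cyclic order $v_1,\dots,v_n$ of $V(G)$ and in each copy orient all edges ``forward'' along this order in one half of the copies and ``backward'' in the other half, or more simply balance them so each added clique contributes an Eulerian orientation) yields an orientation $\vec{G'}$ with $\lambda_{\vec{G'}}(x,y)\ge \lambda_{\vec G}(x,y)+1\ge r'(x,y)$ for all $x,y$ — the $+1$ coming from the fact that $N$ balanced complete-graph copies give at least one arc-disjoint $xy$-path and one $yx$-path between every pair; (2) conversely, show that if $\vec{G'}$ is a feasible orientation for $(G',r')$, then its restriction to $E(G)$ need not itself be feasible for $r$, so instead argue directly: since the added edges form pairs of parallel edges, by Proposition \ref{oppo} we may assume a feasible orientation of $G'$ orients the members of each parallel pair of added edges in opposite directions, hence each added $K_{V(G)}$ copy becomes a digon-collection contributing the \emph{same} amount to every cut; subtracting these, the restriction to $G$ is an orientation $\vec G$ of $G$ with $\lambda_{\vec G}(x,y)\ge r'(x,y)-(\text{contribution of added digons})\ge r(x,y)$ whenever $r(x,y)\ge 1$, and for pairs with $r(x,y)=0$ there is nothing to check. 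Finally, the construction is clearly polynomial, so NP-hardness transfers.

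The main obstacle will be step (2): getting the ``subtraction'' argument exactly right. Replacing added parallel pairs by digons (via Proposition \ref{oppo}) is the natural device, but one must be careful that $\lambda$ in a digraph obtained by deleting a collection of digons drops by exactly the number of digons crossing each relevant cut, uniformly for all cuts, so that $\lambda_{\vec G}(x,y)\ge r(x,y)$ follows cut by cut. Concretely, for a fixed pair $(x,y)$ and any $x$--$y$ cut $X$ (with $x\in X$, $y\notin X$), the number of digons of the added structure crossing $X$ is the same for all pairs only if the added structure is ``symmetric'' enough; choosing the added edges to be $N$ copies of the complete graph on $V(G)$ makes the crossing count depend only on $|X|$, so after removing them the residual out-degree of $X$ in $\vec G$ is at least $r'(x,y)$ minus that uniform amount, which I will arrange to be at most $r'(x,y)-1$, yielding $\ge 1\ge$ the needed bound when $r(x,y)\le 1$ and $\ge r(x,y)$ when $r(x,y)\ge 1$ by the same slack. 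I would double-check the arithmetic of choosing $N$ so that the complete-graph padding contributes exactly enough to cover the ``$+1$'' everywhere but never more than the original deficit, possibly by instead adding exactly a Hamiltonian cycle's worth of digons (one digon per consecutive pair in a cyclic order, i.e. $n$ digons forming a directed cycle in both directions), which crosses every nontrivial cut at least twice and thus cleanly adds $1$ to $\lambda(x,y)$ for every pair while being removable via Proposition \ref{oppo}.
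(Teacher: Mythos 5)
Your step (2) is where the argument breaks, and I do not see how to repair it within your framework. The paper's proof also reduces from LCO, but it adds \emph{two new vertices} $a,b$, the edge $ab$, and all edges $ax,bx$, and sets $r'(a,b)=|V(G)|$, $r'(b,a)=1$, $r'(x,y)=r(x,y)+1$; these requirements \emph{force} (after reversing a directed cycle through $ab$, which preserves all local arc-connectivities) every gadget edge's orientation, so that the gadget contributes \emph{exactly} one extra arc-disjoint path between every ordered pair of original vertices: any path using the gadget must use the single arc $ba$, so at most one path in any arc-disjoint family goes through the gadget, and the restriction to $G$ satisfies $\lambda_{\vec G}(x,y)\ge r'(x,y)-1=r(x,y)$. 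Your construction, by contrast, only adds parallel edges among the \emph{existing} vertices and sets $r'(x,y)=\max\{r(x,y),1\}$. Then the backward direction fails: a feasible orientation of $(G',r')$ can satisfy the requirements entirely through the padding edges while orienting $E(G)$ arbitrarily badly, and no subtraction argument can recover $\lambda_{\vec G}(x,y)\ge r(x,y)$. Concretely, the number of added (di)gons crossing a cut $X$ is cut-dependent and typically far larger than $r'(x,y)-r(x,y)\in\{0,1\}$ — for $N$ copies of $K_{V(G)}$ it is $N|X||V(G)\setminus X|$, and even for a doubled Hamiltonian cycle it is at least $2$ and can be much more — so the bound $d^+_{\vec G}(X)\ge r'(x,y)-(\text{crossing digons})$ gives nothing. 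A padding structure inside $V(G)$ that adds \emph{exactly} one to every cut in each direction does not exist (a cycle crosses many cuts more than twice), which is precisely why new bottleneck vertices are needed; your hope to "arrange the crossing count to be at most $r'(x,y)-1$" cannot be realized.

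Two further problems: (a) Proposition \ref{oppo} as stated concerns globally $k$-arc-connected orientations, not orientations meeting an arbitrary local requirement function $r'$, so you cannot invoke it to assume the added parallel pairs form digons in your setting without a local-connectivity analogue; (b) even in the forward direction, with $r'(x,y)=\max\{r(x,y),1\}$ the reduction cannot be answer-preserving, since for pairs with $r(x,y)\ge 1$ you add edges without raising the requirement, which makes "no"-instances of $(G,r)$ liable to become "yes"-instances of $(G',r')$. The fix is essentially the paper's: increase every requirement by one ($r'=r+1$ on $V(G)\times V(G)$) and add a gadget whose orientation is forced by its own requirements and which can contribute at most one path per pair.
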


\begin{proof}
    We prove this by a reduction from LCO. Let $(G,r)$ be an instance of LCO. We now create a graph $G'$ from $G$ by adding two vertices $a$ and $b$, an edge $ab$ and edges $ax$ and $bx$ for all $x \in V(G)$. Further, we define $r':V(G')\times V(G')\rightarrow \mathbb{Z}_{\geq 0}$ by $r'(ab)=|V(G)|,r'(b,a)=1$, $r'(x,a)=r'(a,x)=r'(x,b)=r'(b,x)=1$ for all $x \in V(G)$ and $r'(x,y)=r(x,y)+1$ for all $x,y \in V(G)$.

    Observe that $r'(x,y)\geq 1$ for all $x,y \in V(G')$. We now show that $(G',r')$ is a positive instance of LCO if and only if $(G,r)$ is a positive instance of LCO.

   First suppose that $(G',r')$ is a positive instance of LCO, so there is an orientation $\vec{G'}$ of $G'$ for which $\lambda_{\vec{G'}}(x,y)\geq r'(x,y)$ holds for all $x,y \in V(G')$. As $\lambda_{\vec{G'}}(a,b)\geq r'(a,b)\geq 1$ and $\lambda_{\vec{G'}}(b,a)\geq r'(b,a)\geq 1$, we obtain that $\vec{G'}$ contains a directed cycle which contains the arc corresponding to the edge $ab$. Possibly reversing the orientation of all arcs of this cycle, we may suppose that the edge $ab$ is oriented as $ba$ in $\vec{G'}$. As $\lambda_{\vec{G'}}(a,b)\geq r'(a,b)=|V(G)|$, we obtain that for all $x \in V(G)$, the edge $xa$ is oriented as $ax$ and the edge $xb$ is oriented as $xb$ in $\vec{G'}$. Let $\vec{G}=\vec{G'}[V(G)]$. Observe that $\vec{G}$ is an orientation of $G$. Further, for all $x,y \in V(G)$, we have $\lambda_{\vec{G}}(x,y)=\lambda_{\vec{G'}}(x,y)-1\geq r'(x,y)-1=r(x,y)$. Hence $(G,r)$ is a positive instance of LCO.

    Now suppose that $(G,r)$ is a positive instance of LCO, so there is an orientation $\vec{G}$ of $G$ with $\lambda_{\vec{G}}(x,y)\geq r(x,y)$ for all $x,y \in V(G)$. Let an orientation $\vec{G'}$ of $G'$ be obtained by orienting the edge $ab$ as $ba$, orienting the edge $xa$ as $ax$ and orienting the edge $xb$ as $xb$ for all $x \in V(G)$ and giving all other edges the orientation they have in $\vec{G}$. Clearly, we have $\lambda_{\vec{G'}}(a,b)=|V(G)|=r'(a,b), \lambda_{\vec{G'}}(b,a)=1=r'(b,a)$ and $\lambda_{\vec{G'}}(a,x)\geq 1=r'(a,x),\lambda_{\vec{G'}}(x,a)\geq 1=r'(x,a),\lambda_{\vec{G'}}(b,x)\geq 1=r'(b,x)$ and $\lambda_{\vec{G'}}(x,b)\geq 1=r'(x,b)$ for all $x \in V(G)$. Further, for all $x,y \in V(G)$, we have $\lambda_{\vec{G'}}(x,y)=\lambda_{\vec{G}}(x,y)+1\geq r(x,y)+1=r'(x,y)$. Hence $(G',r')$ is a positive instance of LCO.
\end{proof}


For the main result of this section, we formally consider the following deorientation problem:
\medskip

\begin{tabular}{|ccc|}\hline
 & \begin{minipage}{14cm}
\vspace{2mm}

\noindent{}{\bf Local Connectivity DeOrientation (LCDO)}
\vspace{2mm}

{\bf Input:} A digraph $D$, a requirement function $r:V(D)\times V(D)\rightarrow \mathbb{Z}_{\geq 0}$, an integer $k$.

\vspace{2mm}

{\bf Question:} Is there a mixed graph $M$ that can be obtained from $D$ by deorienting at most $k$ arcs and which satisfies $\lambda_{M}(x,y)\geq r(x,y)$ for all $x,y \in V(D)$?
\medskip

\vspace{2mm}

\end{minipage} & \\ \hline
\end{tabular}
\medskip

We prove the following:

\begin{theorem}\label{lcdohard}
LCDO is NP-hard.
\end{theorem}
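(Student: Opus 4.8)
The plan is to reduce from LCO restricted to instances where all requirements are at least $1$, which is NP-hard by Proposition~\ref{lcohard1}. Given such an instance $(G,r)$ of LCO, I would build a digraph $D$ together with a requirement function $r'$ and an integer $k$ as follows. Replace each edge $e = uv$ of $G$ by a short gadget: orient $e$ in an arbitrary direction, say as $uv$, and then subdivide this arc by a new vertex $m_e$, producing the two arcs $um_e$ and $m_e v$. The key point is that $m_e$ has in-degree $1$ and out-degree $1$ in $D$, so in order to route any flow ``against'' the chosen direction of $e$ we are forced to deorient \emph{both} of the arcs $um_e$ and $m_e v$; deorienting just one of them is useless because it cannot help connectivity through a degree-$2$ vertex. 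Conversely, deorienting both arcs of the gadget is equivalent, for connectivity purposes, to having the undirected edge $uv$ available in both directions. So a set of $k$ arcs to deorient that is ``sensible'' corresponds to choosing a subset $F \subseteq E(G)$ of edges to leave bidirected (each costing $2$ deorientations) while orienting the rest in the pre-chosen direction; and by LCO we want $F = \emptyset$, i.e. we want to decide whether the fixed orientation can be perturbed into one meeting the requirements using as few reversals as possible — but here we want exactly the all-orientations regime, so set $k$ to be just small enough to forbid deorienting any gadget fully, namely $k = 1$ (or $k$ arbitrary once we argue that deorienting a single arc of a gadget never helps). Then $M$ meeting $\lambda_M(x,y) \ge r'(x,y)$ with at most $k=1$ deorientations is possible iff the fixed subdivided orientation of $G$ already meets the requirements — which is not what we want either.

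Let me instead take the cleaner route. For each edge $e=uv$ of $G$ I put into $D$ \emph{both} subdivided arcs of the two orientations but make them mutually exclusive: concretely, create a gadget that contains one arc whose deorientation ``activates'' the edge as usable in both directions, and arrange via $r'$ that in any valid $M$ exactly one orientation decision is effectively forced per edge. The honest and standard way to get the equivalence is: for each $e=uv$ add a vertex $m_e$ and arcs $u m_e$, $m_e v$ as above; set $r'(x,y)=r(x,y)$ for all original $x,y\in V(G)$ and additionally $r'(u,m_e)=r'(m_e,v)=1$ for every $e=uv$ (forcing the two gadget arcs to carry flow consistently); and set $k = |E(G)|$. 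Now I claim $(D, r', |E(G)|)$ is a positive instance of LCDO iff $(G,r)$ is a positive instance of LCO. For the forward direction, given a valid $M$ deorienting a set $S$ of arcs with $|S|\le |E(G)|$: since each gadget needs its two arcs oriented consistently in any routing and deorienting one arc of a gadget cannot create a new directed path through $m_e$ unless both are deoriented, I may assume $S$ consists of pairs or is ``wasteful''; a counting/exchange argument (at most $|E(G)|$ deorientations, and each useful deorientation of a gadget costs $2$) shows at most $\lfloor |E(G)|/2\rfloor$ gadgets are bidirected, and by choosing $r'$ values on the $m_e$'s large enough to block partial deorientations we can force $S=\emptyset$, so $M$ is a genuine orientation of (the subdivision of) $G$, giving an orientation $\vec G$ with $\lambda_{\vec G}(x,y)=\lambda_M(x,y)\ge r(x,y)$. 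Conversely an orientation solving $(G,r)$ orients each gadget the corresponding way with $0$ deorientations.

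The step I expect to be the main obstacle is making the gadget genuinely \emph{force} the all-orientations regime while keeping $k$ meaningful — i.e. ruling out the possibility that deorienting a single arc inside some gadget, combined with fewer deorientations elsewhere, yields a cheaper feasible solution. I would handle this by inflating the requirement profile: attach to each $m_e$ a small rigid sub-gadget (e.g. a couple of extra vertices joined to $m_e$ by digons, together with unit requirements among them) so that $m_e$ behaves like a cut vertex whose removal forces consistent orientation, and so that a lone deorientation at $m_e$ is provably useless for every pair in the requirement support. One then verifies, by the usual max-flow/menger-style cut argument, that $\lambda_M(x,y)$ for any original pair $x,y$ equals $\lambda$ in the graph obtained by contracting each fully-deoriented gadget to a bidirected edge and each partially-deoriented or un-deoriented gadget to the corresponding single directed edge; together with the independence/disjointness of the gadgets this yields the required equivalence and hence the NP-hardness of LCDO via Proposition~\ref{lcohard1}. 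The remaining verifications (that the gadget sizes are polynomial, that $r'$ is computable in polynomial time, and the two easy routing constructions) are routine.
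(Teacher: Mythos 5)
Your overall strategy (reduce from the restriction of LCO given by Proposition~\ref{lcohard1}, subdivide each edge by a new vertex, budget $k=|E(G)|$) matches the paper, but your gadget does not force anything, and the claimed equivalence is false as stated. In your construction the edge $e=uv$ is pre-oriented arbitrarily and subdivided into the directed path $u\rightarrow m_e\rightarrow v$, and the extra requirements $r'(u,m_e)=r'(m_e,v)=1$ are already satisfied by these arcs with \emph{zero} deorientations, so they block nothing. Consequently a feasible LCDO solution within budget $|E(G)|$ need not correspond to an orientation of $G$: it corresponds to keeping every edge in the arbitrary pre-chosen direction except that up to $\lfloor|E(G)|/2\rfloor$ edges may be made bidirected (two deorientations each), and a bidirected edge is strictly more powerful than any orientation of it. Concretely, take $G$ the $4$-cycle $v_1v_2v_3v_4$ with $r(v_1,v_3)=2$ and $r=1$ on all other ordered pairs: no orientation of $G$ meets $r$ (a strong orientation of a cycle is a directed cycle, giving $\lambda(v_1,v_3)=1$), yet if the arbitrary pre-orientation is the directed cycle $v_1\rightarrow v_2\rightarrow v_3\rightarrow v_4\rightarrow v_1$, deorienting the four arcs of the gadgets of $v_3v_4$ and $v_4v_1$ (exactly the budget) yields a mixed graph satisfying all of $r'$. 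So your forward direction fails; your backward direction is also wrong as written, since you cannot ``orient each gadget the corresponding way'' --- the digraph $D$ is fixed and the only allowed operation is deorientation, so simulating an orientation that disagrees with your pre-chosen one costs two deorientations per disagreeing edge, which can exceed $|E(G)|$. Your proposed repair (digons and unit requirements attached to $m_e$, or choosing $r'$ so as to ``force $S=\emptyset$'') does not address this: forcing $S=\emptyset$ would make the instance equivalent to checking a single fixed orientation, which is not LCO, and digons at $m_e$ do not constrain which arcs are deoriented at all.

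The missing idea is to make the subdivision vertex a \emph{sink} and to place the nontrivial requirement at that sink. In the paper's reduction, $e=uv$ is replaced by a vertex $w_e$ with \emph{both} arcs $uw_e$ and $vw_e$ oriented into $w_e$, and one sets $r'(w_e,y)=1$ for all $y\in V(G)$ (and $r'=r$ on $V(G)\times V(G)$, $0$ otherwise), with $k=|E(G)|$. Since $w_e$ has out-degree $0$, the requirement $r'(w_e,\cdot)\geq 1$ forces at least one deorientation in each gadget; the budget then forces exactly one per gadget, and \emph{which} of the two arcs is deoriented encodes the orientation of $e$ (deorienting $vw_e$ lets one travel $u\rightarrow w_e$ and then $w_e$--$v$, i.e.\ it simulates orienting $e$ from $u$ to $v$, and nothing more). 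This is the forcing mechanism your directed-path gadget lacks, and with it both directions of the equivalence go through by routine cut/flow arguments.
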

\begin{proof}
    We prove this by a reduction from the restriction of LCO described in Proposition \ref{lcohard1}.  Let $(G,r)$ be an instance of LCO with $r(u,v)\geq 1$ for all $u,v \in V(G)$. We now obtain a digraph $D$ by replacing every $e=uv \in E(G)$ by a new vertex $w_e$ and the arcs $uw_e$ and $vw_e$. We further define $r':V(D)\times V(D)$ by\\
    \begin{center}
    $r'(x,y)=
\begin{cases}
r(x,y), \text{ if }x,y \in V(G)\\
1, \text{ if }x \in V(D)-V(G) \text{ and }y \in V(G)\\
0, \text{ otherwise. }
\end{cases}$
\end{center}

Finally, we let $k=|E(G)|$. We show in the following that $(D,r',k)$ is a positive instance of LCDO if and only if $(G,r)$ is a positive instance of LCO. 

First suppose that $(G,r)$ is a positive instance of LCO, so there is an orientation $\vec{G}$ of $G$ that satisfies $\lambda_{\vec{G}}(x,y)\geq r(x,y)$ for all $x,y \in V(G)$. We now obtain the mixed graph $M$ from $D$ by deorienting the arc $vw_e$ for all $e=uv \in E(G)$ which are oriented as $uv$ in $\vec{G}$. It is easy to see that $\lambda_{M}(x,y)=\lambda_{\vec{G}}(x,y)\geq r(x,y)=r'(x,y)$ holds for all $x,y \in V(G)$. Now let $x \in V(G)$ and $e=uv \in E(G)$ such that $e$ is oriented to $uv$ in $\vec{G}$. Then we have $\lambda_{M}(w_e,x)\geq \min\{\lambda_{M}(w_e,v),\lambda_{M}(v,x)\}\geq \min\{1,r(v,y)\}\geq 1$. As $M$ is obtained from $D$ deorienting $k$ arcs, we obtain that $(D,r',k)$ is a positive instance of LCDO.

Now suppose that $(D,r',k)$ is a positive instance of LCDO, so there is a set $F\subseteq A(D)$ of at most $k$ arcs such that the mixed graph $M$ obtained from $D$ by deorienting these edges satisfies $\lambda_{M}(x,y)\geq r'(x,y)$ for all $x,y \in V(D)$. For every $e=uv \in E(G)$, as $\lambda_{M}(w_e,u)\geq r'(w_e,u)=1$, we obtain that at least one of the edges $uw_e$ and $vw_e$ is contained in $F$. As $k=|E(G)|$, exactly one of these edges is contained in $F$. Now let an orientation $\vec{G}$ of $G$ be obtained by orienting every edge $e=uv \in E(G)$ as $uv$ when $vw_e \in F$. We then have $\lambda_{\vec{G}}(x,y)=\lambda_{M}(x,y)\geq r'(x,y)=r(x,y)$ for all $x,y \in V(G)$, so $(G,r)$ is a positive instance of LCO.
\end{proof}

\subsection{Approximating \boldmath$deor_k^{arc}(D)$}\label{appro2}

As mentioned earlier the number $deor_k^{arc}(D)$ can be found in polynomial time for $k=1$. For all other values of $k$ the complexity status is open, see Problem \ref{probarc}.

On the other hand, the number $deor^{arc}_k(D)$ can be found in polynomial time for every $k$ when $D$ is a tournament (that is, an orientation of a complete graph) as proven by the first author and Yeo in \cite{bangDAM136}. We show below how to obtain a 2-approximation for all $k$ and every digraph $D$.\\

An {\bf out-branching} ({\bf in-branching}) rooted at the vertex $s$ in a digraph $D$, denoted $B_s^+$ ($B^-_s$),
is a connected spanning subdigraph of $D$ in which the in-degree (out-degree) of every vertex except $s$ is 1 and $s$ has in-degree (out-degree) 0.

\begin{theorem}[Edmonds]\cite{edmonds1973}\label{edmo}
  A digraph $D$ has $k$ arc-disjoint out-branchings rooted at the vertex $s\in V$ if and only if
  \begin{equation}
    \label{kbranchcond}
    d_D^+(X)\geq k\text{ for all } X \subsetneq V(D)\text{ with }s \in X
    \end{equation}
  \end{theorem}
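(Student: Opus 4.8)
The plan is to treat the two implications separately: necessity is a one-line cut-counting argument, and essentially all of the work sits in sufficiency. For necessity, suppose $D$ has arc-disjoint out-branchings $B_1,\dots,B_k$ rooted at $s$, and let $X\subsetneq V(D)$ with $s\in X$. Choosing any vertex $v\in V(D)\setminus X$, each $B_i$ contains a directed $s$--$v$ path, and this path must traverse at least one arc of $\delta_D^+(X)$; since the $B_i$ are arc-disjoint these arcs are pairwise distinct, so $d_D^+(X)\ge k$, which is exactly (\ref{kbranchcond}).

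For sufficiency I would argue by induction on $k$, the case $k=0$ being trivial. The heart of the matter is a peeling lemma: if (\ref{kbranchcond}) holds, then $D$ has an out-branching $B$ rooted at $s$ such that the digraph $D-A(B)$ (obtained by deleting the arcs of $B$) still satisfies $d_{D-A(B)}^+(X)\ge k-1$ for every $X\subsetneq V(D)$ with $s\in X$. Granting this, the induction hypothesis applied to $D-A(B)$ produces $k-1$ arc-disjoint out-branchings rooted at $s$, and adjoining $B$ gives $k$ of them. To prove the lemma I would grow a subdigraph $F$ that is an out-tree rooted at $s$ (a subdigraph in which $s$ reaches every vertex of $F$ along a unique directed path), maintaining the invariant that $d_{D-A(F)}^+(X)\ge k-1$ for all $X\subsetneq V(D)$ with $s\in X$; the one-vertex tree satisfies this by (\ref{kbranchcond}). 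Call a set $X$ with $s\in X\subsetneq V(D)$ \emph{tight} if $d_{D-A(F)}^+(X)=k-1$. By submodularity of the function $X\mapsto d_{D-A(F)}^+(X)$ together with the invariant, the tight sets are closed under intersection and under union whenever the union is a proper subset of $V(D)$; moreover, if $X$ is tight then $d_D^+(X)\ge k>k-1=d_{D-A(F)}^+(X)$ forces $F$ to have an arc leaving $X$, so $V(F)\not\subseteq X$. The claim is that whenever $V(F)\ne V(D)$ there is an arc $uv$ of $D$ with $u\in V(F)$, $v\notin V(F)$, and $uv\notin\delta_D^+(X)$ for every tight $X$; adding such an arc to $F$ enlarges the out-tree and preserves the invariant, since deleting $uv$ from $D-A(F)$ lowers $d^+(X)$ only for sets $X$ with $u\in X\not\ni v$, and any such $X$ is non-tight by the choice of $uv$. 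Iterating until $F$ spans $V(D)$ yields the desired out-branching $B$.

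The step I expect to be the main obstacle is exactly this last claim: extracting an admissible boundary arc out of $V(F)$. The natural approach fixes a vertex $t\notin V(F)$, lets $Z$ be the union of all tight sets avoiding $t$ (which is tight and still a proper set, by the closure property), observes $V(F)\not\subseteq Z$, and seeks an arc leaving $V(F)$ whose tail lies outside $Z$ and is therefore in no tight set. The difficulty is that the at least $k$ arcs of $D$ leaving $V(F)$ could a priori all have their tails inside $V(F)\cap Z$, so ruling this out requires a careful submodular comparison of $Z$, $V(F)$, and $V(F)\cap Z$, exploiting that $V(F)$ itself is never tight: because all arcs of $F$ stay inside $V(F)$, we have $d_{D-A(F)}^+(V(F))=d_D^+(V(F))\ge k$. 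If this direct uncrossing becomes unwieldy, I would fall back on one of the standard routes to Edmonds' theorem—either a matroid-intersection derivation, in which the arc sets that pack into $k$ out-branchings rooted at $s$ are the common independent sets of a partition matroid bounding in-degrees and a $k$-fold union of the graphic matroid of $UG(D)$, with (\ref{kbranchcond}) emerging from the matroid-intersection min-max formula, or Lovász's original inductive proof, which follows the growth scheme above.
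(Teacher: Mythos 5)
The paper itself does not prove this statement---it is imported verbatim from Edmonds' 1973 paper with a citation---so your attempt has to stand on its own. Your necessity argument is fine, and your overall scheme (peel off one out-branching $B$ so that $D-A(B)$ still satisfies the cut condition with $k-1$, growing $B$ as an out-tree while maintaining the invariant) is exactly Lovász's classical proof: the invariant, the definition of tight sets, their closure under union when the union is proper, the fact that $V(F)\not\subseteq X$ for every tight $X$, and the verification that adding a ``safe'' arc preserves the invariant are all correct. The problem is that the step you yourself flag as the main obstacle---the existence of an arc $uv$ with $u\in V(F)$, $v\notin V(F)$ not leaving any tight set---is the entire content of the proof, and the route you sketch for it is not just incomplete but flawed. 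If you fix one vertex $t\notin V(F)$ and let $Z$ be the union of the tight sets avoiding $t$, then a tail $u\notin Z$ is only guaranteed to lie in no tight set \emph{avoiding $t$}; a tight set containing $t$ but not the head $v$ of your arc still blocks it. Safety of $uv$ is equivalent to $u$ avoiding the union of the tight sets avoiding $v$, so the relevant ``$t$'' depends on the head of the arc you end up choosing, which your construction does not control. Hence even after the ``careful submodular comparison'' you anticipate, this criterion cannot certify a safe arc unless the arc happens to point at $t$, and such an arc need not exist. Falling back on ``matroid intersection or Lovász's original proof'' is a citation, not a proof, so the argument has a genuine gap at its central claim.

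For the record, the standard way to close it stays entirely within your framework but uncrosses against $V(F)$ rather than against a fixed outside vertex: if some tight set $X$ satisfies $X\cup V(F)\neq V$, take $X$ maximal with this property. If no arc of $D-A(F)$ went from $V(F)-X$ to $V-(X\cup V(F))$, then every arc leaving $X\cup V(F)$ would have its tail in $X$, giving $d^+_{D-A(F)}(X\cup V(F))\le d^+_{D-A(F)}(X)=k-1$, so $X\cup V(F)$ would be a tight set strictly containing $X$ (recall $V(F)\not\subseteq X$) and still proper, contradicting maximality; and any arc $uv$ from $V(F)-X$ to $V-(X\cup V(F))$ is safe, since a blocking tight set $Y$ would make $X\cup Y$ tight (the head $v$ lies outside $X$, $Y$ and $V(F)$, so the union is proper and still misses $v$ together with $V(F)$), whence $Y\subseteq X$ by maximality, contradicting $u\in Y\setminus X$. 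If instead every tight set $X$ has $X\cup V(F)=V$ (or there is no tight set), then every arc leaving $V(F)$ is safe, and one exists because $d^+_{D-A(F)}(V(F))=d^+_D(V(F))\ge k\ge 1$, as you observed. With that claim supplied, your proof is the standard one.
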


We further need the following result, due to Edmonds, which is based on matroidal methods, 
see e.g. \cite[Corollary 53.10a]{schrijver2003}.

\begin{proposition}\label{matro}
    Let $D=(V,A)$ be a digraph, $s \in V$, $k$ a positive integer and $w:A \rightarrow \mathbb{R}_{\geq 0}$ a nonnegative weight function. Then a packing $\mathcal{B}$ of $k$ out-branchings (in-branchings) rooted at $s$ in $D$ that minimizes $w(A(\mathcal{B}))$ can be found in polynomial time if such a packing exists.
\end{proposition}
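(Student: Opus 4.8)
\emph{Proof proposal.} The plan is to reduce this to weighted matroid intersection. First, the in-branching case follows from the out-branching case: reversing every arc of $D$ turns in-branchings rooted at $s$ into out-branchings rooted at $s$, preserves arc-disjointness, and preserves weights if we give a reversed arc the weight of the original; so it suffices to handle out-branchings. Write $n=|V(D)|$, fix the root $s$, and assume a packing of $k$ arc-disjoint out-branchings rooted at $s$ exists (by Theorem \ref{edmo} this means $d_D^+(X)\geq k$ for every $X\subsetneq V(D)$ with $s\in X$). Work on the ground set $A'=A\setminus \delta_D^-(s)$ of arcs not entering $s$ and define two matroids on $A'$: let $M_1$ be the partition matroid in which $F\subseteq A'$ is independent iff at most $k$ arcs of $F$ enter $v$ for each $v\in V(D)-s$, and let $M_2$ be the $k$-fold matroid union of the graphic (cycle) matroid of $UG(D)$ restricted to $A'$, so that $F$ is independent in $M_2$ iff the underlying multigraph of $F$ is a union of $k$ forests. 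The assumed packing uses $k(n-1)$ arcs of $A'$ that both enter each $v\in V(D)-s$ exactly $k$ times and decompose into $k$ edge-disjoint spanning trees of $UG(D)$, so $M_1$ and $M_2$ both have rank exactly $k(n-1)$.

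The heart of the argument is the equivalence: a set $F\subseteq A'$ is the arc set of a packing of $k$ arc-disjoint out-branchings rooted at $s$ if and only if $F$ is a common basis of $M_1$ and $M_2$. One direction is immediate, as just noted. Conversely, if $F$ is a common basis then $|F|=k(n-1)$, exactly $k$ arcs of $F$ enter every $v\in V(D)-s$ and none enter $s$, and by the Nash--Williams forest-cover bound every nonempty $Y\subseteq V(D)-s$ satisfies $e_F(Y)\leq k(|Y|-1)$, where $e_F(Y)$ denotes the number of arcs of $F$ with both ends in $Y$; hence $d_F^-(Y)=k|Y|-e_F(Y)\geq k$, i.e. $d_F^+(X)\geq k$ for every $X\subsetneq V(D)$ with $s\in X$. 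Applying Theorem \ref{edmo} to the digraph $(V(D),F)$ shows it contains $k$ arc-disjoint out-branchings rooted at $s$, and since it has exactly $k(n-1)$ arcs these branchings partition $F$; such a partition is moreover obtainable in polynomial time from any constructive proof of Theorem \ref{edmo} (e.g. via splitting off / augmenting).

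It then remains to compute a minimum-weight common basis of $M_1$ and $M_2$. Both matroids admit polynomial-time independence and rank oracles: $M_1$ is a partition matroid, and the rank of $F$ in the union $M_2$ equals $\min_{F'\subseteq F}\bigl(|F\setminus F'|+k\cdot r(F')\bigr)$, where $r$ is the (polynomially computable) rank in the cycle matroid of $UG(D)$. Hence Edmonds' weighted matroid intersection algorithm computes in polynomial time a minimum-weight common independent set of every cardinality, in particular a minimum-weight common basis $F^{*}$ of size $k(n-1)$ whenever one exists; by the equivalence above, $F^{*}$ is precisely the arc set of a minimum-weight packing $\mathcal{B}$, and decoding $F^{*}$ into the $k$ out-branchings is polynomial as explained. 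The main obstacle is the converse direction of the equivalence, namely arguing that the ``$k$ forests'' certified by $M_2$ can be reorganised into $k$ out-branchings rooted at $s$; this is exactly where the in-degree constraint from $M_1$ combines with the Nash--Williams bound to force the hypothesis of Edmonds' disjoint-branchings theorem, everything else being routine matroid-algorithmic bookkeeping.
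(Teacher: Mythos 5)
Your argument is correct. Note that the paper does not actually prove Proposition \ref{matro}: it only cites \cite[Corollary 53.10a]{schrijver2003}, and the matroidal argument behind that citation is precisely the reduction you give, so your proposal essentially supplies the proof the paper delegates to the literature. The one point that needed care is the converse of your equivalence, and you handle it correctly: for a common basis $F$, tightness in the partition matroid gives in-degree exactly $k$ at every $v\neq s$ and $0$ at $s$, the $k$-forest bound gives $e_F(Y)\leq k(|Y|-1)$ for every nonempty $Y\subseteq V-s$, and together these yield $d_F^-(Y)\geq k$, i.e.\ the hypothesis of Theorem \ref{edmo} for $(V,F)$; the cardinality count $|F|=k(n-1)$ then forces the resulting branchings to partition $F$. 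The remaining ingredients you invoke (an independence/rank oracle for the $k$-fold union of the graphic matroid, Edmonds' weighted matroid intersection for a minimum-weight common independent set of prescribed size, and an algorithmic version of Theorem \ref{edmo} to decode $F^*$ into the $k$ branchings) are all standard and polynomial, so the proof is complete; compared with the bare citation in the paper, your route has the advantage of being self-contained at the cost of length.
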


We are now ready to prove the following theorem which is the main result of this section:
\begin{theorem}
 There is an algorithm whose input is a digraph $D$ and an integer $k$ and that returns an arc set $F\subseteq A(D)$ whose deorientation results in a $k$-arc-strong mixed graph and is at most twice as big as a smallest set with this property.
  \end{theorem}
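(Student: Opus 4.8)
The plan is to use Theorem~\ref{edmo} (Edmonds' branching theorem) together with Proposition~\ref{matro} to construct a good approximate solution from arc-disjoint branchings, and then argue that any feasible deorientation can be used to produce many branchings, which gives the factor-$2$ lower bound on the optimum.

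First I would fix a root $s \in V(D)$ arbitrarily. The key structural observation is that a mixed graph $M$ obtained from $D$ by deorienting an arc set $F$ is $k$-arc-strong precisely when, for every $\emptyset \neq X \subsetneq V$, the number of arcs of $D$ leaving $X$ that are \emph{not} in $F$, plus the total number of arcs of $D$ between $X$ and its complement that are in $F$ (i.e.\ the edges of $M$ crossing $X$), is at least $k$, and symmetrically for arcs entering $X$. Equivalently, if $D_F$ denotes the digraph obtained from $D$ by \emph{adding the reverse} of every arc in $F$ (which, by the remark in the introduction, has the same cut structure as $M$), then $M$ is $k$-arc-strong iff $D_F$ is $k$-arc-strong. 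So the problem is: add a minimum number of reverse arcs to make $D$ $k$-arc-strong, where the reverse of an existing arc of $D$ is the only type of arc we may add. Call the optimum $\mathrm{OPT}$.

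For the algorithm, I would apply Proposition~\ref{matro} twice. Give every arc of $D$ weight $0$ and, for each arc $a=uv$ of $D$, think of its reverse $vu$ as an available auxiliary arc of weight $1$; more cleanly, work in the digraph $\hat D$ obtained from $D$ by adding for every arc $a$ a parallel reverse arc $\bar a$, with $w(a)=0$ and $w(\bar a)=1$. Then find a minimum-weight packing $\mathcal{B}^+$ of $k$ out-branchings rooted at $s$ in $\hat D$ and a minimum-weight packing $\mathcal{B}^-$ of $k$ in-branchings rooted at $s$ in $\hat D$; both exist because $UG(D)$ has a $k$-arc-strong orientation-completion via reversals (feasibility of the instance), and both can be computed in polynomial time. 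Let $F^+$ (resp.\ $F^-$) be the set of arcs $a$ of $D$ whose reverse $\bar a$ is used in $\mathcal{B}^+$ (resp.\ $\mathcal{B}^-$), and output $F = F^+ \cup F^-$. Deorienting $F$ makes $M$ $k$-arc-strong: the arcs of the $k$ out-branchings (with reverses replaced by the corresponding edges of $M$, which can be traversed outward) give $d_M^+(X) \geq k$ for every $X$ with $s \in X$, hence for every proper nonempty $X$ by choosing $s$ inside or outside; the in-branchings handle the in-degrees; together these give $k$-arc-strong connectivity of $M$. So $F$ is feasible and $|F| \le |F^+| + |F^-|$.

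For the approximation guarantee I need $|F^+| \le \mathrm{OPT}$ and $|F^-| \le \mathrm{OPT}$. Let $F^*$ be an optimal solution, so $D_{F^*}$ (the digraph $D$ plus reverse arcs for $F^*$) is $k$-arc-strong. Then $d^+_{D_{F^*}}(X) \ge k$ for all $X$ with $s\in X$, so by Theorem~\ref{edmo} $D_{F^*}$ has $k$ arc-disjoint out-branchings rooted at $s$; these live inside $\hat D$ and use reverse arcs only from the set $\{\bar a : a \in F^*\}$, so they form a packing of $k$ out-branchings in $\hat D$ of weight at most $|F^*| = \mathrm{OPT}$. By minimality of $\mathcal{B}^+$ we get $|F^+| = w(\mathcal{B}^+) \le \mathrm{OPT}$, and symmetrically $|F^-| \le \mathrm{OPT}$; here I use that the weight of a packing equals the number of distinct $D$-arcs whose reverse is used, which is exactly $|F^\pm|$ since in each individual branching an arc is used at most once and across branchings they are arc-disjoint. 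Hence $|F| \le 2\,\mathrm{OPT}$, as desired. The main obstacle I anticipate is the bookkeeping in the last sentence: one must be careful that the \emph{weight} $w(A(\mathcal{B}))$ counts a reverse arc $\bar a$ once even if it appears in several branchings --- but since the $k$ branchings in a packing are arc-disjoint in $\hat D$, each $\bar a$ occurs in at most one of them, so $w(A(\mathcal{B})) = |\{a : \bar a \text{ used}\}|$ exactly, and the argument goes through; the other point needing a line of care is the equivalence "deorienting $F$" $\equiv$ "adding reverse arcs for $F$" at the level of $k$-arc-strong connectivity, which is immediate since parallel arcs do not affect which cuts have size $\ge k$ beyond their count, and an undirected edge contributes $1$ to both the out-cut and in-cut of every set it crosses, exactly like a digon.
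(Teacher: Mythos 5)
Your proposal is correct and follows essentially the same route as the paper's proof: the same auxiliary digraph with zero-weight original arcs and unit-weight reverse arcs, the same two minimum-weight packings of $k$ out- and in-branchings rooted at an arbitrary $s$ via Proposition~\ref{matro}, feasibility via Theorem~\ref{edmo}, and the same comparison with an optimal solution giving $|F|\le w(\mathcal{B}_1)+w(\mathcal{B}_2)\le 2\,\mathrm{OPT}$. The extra bookkeeping you flag (arc-disjointness within a packing, and the equivalence of deorienting an arc with adding its reverse) is sound and only makes explicit what the paper leaves implicit.
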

  \begin{proof}
      Let the digraph $D'$ be obtained by adding an arc $a'=vu$ for every arc $a=uv \in A(D)$. We further define a weight function $w:A(D')\rightarrow \mathbb{R}_{\geq 0}$ by $w(a)=0$ and $w(a')=1$ for all $a \in A(D)$. We further fix some arbitary vertex $s \in V(D)$. By Proposition \ref{matro}, we can now calculate a packing $\mathcal{B}_1$ of $k$ out-branchings rooted at $s$ that minimizes $w(A(\mathcal{B}_1))$ and a packing $\mathcal{B}_2$ of $k$ in-branchings rooted at $s$ that minimizes $w(A(\mathcal{B}_2))$ in polynomial time. Let $F$ be the set of arcs $a \in A(D)$ such that $a' \in A(\mathcal{B}_1)\cup A(\mathcal{B}_2)$ and let the algorithm output $F$. It follows from Theorem \ref{edmo} that $F$ is a feasible solution for the deorientation problem. Now let $F^*$ be an optimal solution to the deorientation problem. It follows from Theorem \ref{edmo} that $A(D)\cup \{a':a \in F^*\}$ contains a set of $k$ arc-disjoint out-branchings rooted at $s$ and a set of $k$ arc-disjoint in-branchings rooted at $s$. This yields $|F|=w(A(\mathcal{B}_1) \cup A(\mathcal{B}_2))\leq w(A(\mathcal{B}_1))+w( A(\mathcal{B}_2))\leq 2|F^*| $, hence $F$ has the desired properties.
  \end{proof}

\bibliographystyle{alpha}

\end{document}